\newtheorem{theorem}{Theorem}[section]
\newtheorem{proposition}[theorem]{Proposition}
\newtheorem{conjecture}[theorem]{Conjecture}
\theoremstyle{definition} 
\newtheorem{defn}[theorem]{Definition}
\newtheorem{definition}[theorem]{Definition}
\newtheorem{remark}[theorem]{Remark}
\newcommand{\on}{\operatorname} 
\newcommand{\bM}{\overline{\mathcal M}}
\title{Degenerations, fibrations and higher rank Landau-Ginzburg models}
\author{Charles F. Doran}
\address{Department of Mathematical and Statistical Sciences, University of Alberta and Center of Mathematical Sciences and Applications, Harvard University}
\email{charles.doran@ualberta.ca}
\author{Jordan Kostiuk}
\address{Department of Mathematics, Brown University}
\email{jordan\_kostiuk@brown.edu}
\author{Fenglong You}
\address{School of Mathmatical Sciences \\ University of Nottingham, \\University Park \\Nottingham, NG7 2RD \\United Kingdom.}
\email{fenglong.you@nottingham.ac.uk}
\address{Department of Mathematics \\ ETH Z\"urich, \\Rämistrasse 101, \\8092 Zürich, \\Switzerland}
\email{fenglong.you@math.ethz.ch}
\begin{document}

\begin{abstract} 
We study semi-stable degenerations of quasi-Fano varieties to unions of two pieces. We conjecture that the higher rank Landau-Ginzburg models mirror to these two pieces can be glued together to lower rank Landau-Ginzburg models which are mirror to the original quasi-Fano varieties. We prove this conjecture by relating their Euler characteristics, generalized functional invariants as well as periods. We also use it to conjecture a relation between the degenerations to the normal cones and the fibrewise compactifications of higher rank Landau-Ginzburg models. Furthermore, we use it to iterate the Doran-Harder-Thompson conjecture and obtain higher codimension Calabi-Yau fibrations. 
\end{abstract}

\maketitle

\tableofcontents

\section{Introduction}

We study a generalization of the conjecture of \cite{DHT} by considering a semi-stable degeneration of a quasi-Fano variety $X$ (instead of a degeneration of a Calabi-Yau in \cite{DHT}) such that the central fibre is a union of two quasi-Fano varieties $X_1\cup _{D_0} X_2$. We would like to study the relation of their mirrors.

Recall that, an (ordinary) Landau-Ginzburg (LG) model $(X^\vee, W)$ is a mirror partner of a quasi-Fano variety $X$ with its smooth anticanonical divisor $D$ such that the generic fibre of $W$ is mirror to $D$. If $D$ is not anticanonical (e.g. $D_0\subset X_i$, for $i=1,2$, in the degeneration of a quasi-Fano variety described above) and we still want to define a mirror ``LG model'' for the pair $(X,D)$ such that the generic fibre is mirror to $D$ (with a smooth anticanonical divisor in $D$), then the generic fibre of the ``LG model'' for the pair $(X,D)$ should also be an LG model. Therefore, one should consider an ``LG model'' whose generic fibre is another LG model. In other words, one should consider a higher rank LG model. 

Therefore, in our set-up, we consider a degeneration of a quasi-Fano variety $X$ which leads to a Tyurin degeneration of a smooth anticanonical divisor $D\subset X$ into $D_1\cup _{D_0\cap D_1=D_0\cap D_2}D_2$ where $D_1\subset X_1$ and $D_2\subset X_2$. We consider the hybrid LG models of $(X_1,D_0+D_1)$ and $(X_2,D_0+D_2)$ studied by S. Lee \cite{Lee24}. In this paper, we will simply refer to hybrid LG models as higher rank LG models. In particular, a rank $1$ LG model is an ordinary LG model, i.e., an LG model in the usual sense. In general, a higher rank LG model is a mirror partner of a quasi-Fano variety with a simple normal crossing anticanonical divisor. When the anticanonical divisor consists of two irreducible components, a higher rank LG model (of rank $2$) consists of a pair $(X^\vee, h)$, where $X^\vee$ is a K\"ahler manifold and $h$ is the multi-potential $h:X^\vee\rightarrow \mathbb C^2$ satisfying certain conditions (see Definition \ref{defn:hybrid-LG} for detail). Then we propose the following gluing conjecture.

\begin{conjecture}\label{conj-gluing-1}
Given a semi-stable degeneration of $(X,D)$ into the union of $(X_1,D_0+D_1)$ and $(X_2,D_0+D_2)$, the rank $1$ LG model of $(X,D)$ can be constructed topologically by gluing two rank $2$ LG models of $(X_1,D_0+D_1)$ and $(X_2,D_0+D_2)$. 
\end{conjecture}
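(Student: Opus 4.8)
The plan is to establish a topological gluing at the level of the total spaces of the Landau-Ginzburg models, matching the combinatorial data coming from the semi-stable degeneration. The statement is a ``conjecture,'' so a full proof is not expected; rather, the paper supports it by verifying three invariants (Euler characteristics, generalized functional invariants, and periods), and my proof proposal follows that strategy. First I would recall the precise meaning of the rank $2$ LG models $(X_1^\vee, h_1)$ and $(X_2^\vee, h_2)$ from Definition \ref{defn:hybrid-LG}, where each $h_i : X_i^\vee \to \mathbb{C}^2$ is a multi-potential whose first component controls the fibration by LG models mirror to $D_i$ and whose second component records the splitting divisor $D_0$. The key structural observation is that the Tyurin degeneration of $D$ into $D_1 \cup_{D_0} D_2$ should be mirror, via the Doran-Harder-Thompson philosophy, to a decomposition of the rank $1$ LG model of $(X,D)$ into pieces governed by $h_1$ and $h_2$.

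The main steps I would carry out are as follows. First, I would identify the common ``interface'' along which the gluing occurs: the generic fibre of the second potential in each $h_i$ should be the LG model mirror to $D_0$, and matching these fibres is what allows the two total spaces $X_1^\vee$ and $X_2^\vee$ to be glued. Second, I would construct the glued space set-theoretically by taking the union $X_1^\vee \cup X_2^\vee$ along this interface, and then exhibit a single potential $W$ on the result whose generic fibre recovers the LG model mirror to the (now smoothed) anticanonical divisor $D$. Third, to provide evidence that this glued object is genuinely the rank $1$ LG model of $(X,D)$, I would compute the three invariants: (i) verify the Euler characteristic of the glued space equals that predicted for $(X^\vee, W)$, using a Mayer-Vietoris argument so that $\chi(X^\vee) = \chi(X_1^\vee) + \chi(X_2^\vee) - \chi(\text{interface})$; (ii) check that the generalized functional invariant of $W$ is assembled correctly from those of $h_1$ and $h_2$, reflecting how the monodromy of the two pieces combines; and (iii) match the period integrals, so that the periods of the glued LG model agree with the appropriate products or convolutions of the periods of the two rank $2$ models.

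The hardest step will be making the gluing construction canonical and compatible with the fibration structure simultaneously with the divisor structure. Because each $h_i$ carries \emph{two} potentials while the target $W$ carries only one, one must collapse or integrate out the direction corresponding to $D_0$ precisely along the interface; ensuring that the two collapsing procedures agree on the overlap, and that the resulting $W$ is well-defined and has the correct generic fibre, is the crux of the matter. I would expect the period computation to be the decisive piece of evidence, since periods are rigid enough to detect whether the gluing has been performed correctly, whereas Euler characteristics alone are too coarse. A secondary obstacle is that the gluing is only claimed \emph{topologically}, so I would be careful not to overclaim: the construction produces a space with the correct topology and the matching invariants, but endowing it with a compatible complex or symplectic structure mirror to $(X,D)$ is left open, and indeed this is why the statement remains a conjecture rather than a theorem.
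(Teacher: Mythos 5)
Your proposal follows essentially the same route as the paper: a topological gluing of the two rank $2$ LG models along the interface corresponding to $D_0$ (the fibres of one component of each multi-potential over a large annulus), followed by the same three pieces of evidence --- Euler characteristics via a relative Mayer--Vietoris argument, product relations among generalized functional invariants, and Hadamard-product/residue relations among periods --- with the same acknowledgement that this constitutes evidence rather than a full proof. The only caveats are cosmetic: the paper glues along the \emph{first} components $h_{11},h_{21}$ (your indexing is reversed), and the Euler-characteristic comparison is for the relative invariant $\chi(X^\vee,W^{-1}(t))$, where the key point is that the interface term in Mayer--Vietoris vanishes because it fibres over an annulus (via the Wang sequence), rather than merely being subtracted.
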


Conjecture \ref{conj-gluing-1} can be considered as a generalization of the conjecture in \cite{DHT} to the log Calabi-Yau setting. We provide several pieces of evidence for this conjecture. In Section \ref{sec:gluing}, we prove the topological version of this conjecture by computing Euler numbers. In other words, if a quasi-Fano variety $X$ is obtained from $X_1\cup_{D_0}X_2$ by smoothing and $(X^\vee,W)$ is obtained by gluing rank $2$ LG models of $(X_1,D_0+D_1)$ and $(X_2,D_0+D_2)$, then

\begin{theorem}[see Theorem \ref{thm:topo-gluing}]
\[
\chi(X)=(-1)^{\mathrm d}\chi(X^\vee,W^{-1}(t)),
\] 
where $\mathrm d=\dim_{\mathbb C}X$ and $t$ is a regular value of $W$.
\end{theorem}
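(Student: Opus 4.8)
The plan is to compute both sides by additivity of the Euler characteristic and match them, using the splitting $\chi(X^\vee,W^{-1}(t))=\chi(X^\vee)-\chi(W^{-1}(t))$ to treat the total space and the fibre separately. On the $A$-side I would first record the Euler characteristic of the smoothing. Since the degeneration is semi-stable, the generic fibre $X$ is obtained from $X_1\cup_{D_0}X_2$ by replacing a tubular neighbourhood of the double locus $D_0$ with a $\mathbb C^\ast$-bundle (the Milnor fibre of the local model $xy=t$); stratifying accordingly and using $\chi(\mathbb C^\ast)=0$ yields
\[
\chi(X)=\chi(X_1)+\chi(X_2)-2\chi(D_0),
\]
which can also be read off from the Clemens--Schmid sequence of the one-parameter family.

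On the $B$-side I would make the gluing of Conjecture \ref{conj-gluing-1} explicit. Writing $h_i=(W,w_i)\colon X_i^\vee\to\mathbb C^2$ for the rank $2$ model of $(X_i,D_0+D_i)$, the construction retains the first potential $W$ and glues along the second: the two $w_i$-lines are combined into a $\mathbb P^1$, fibrewise over the common $W$-line. Fixing a regular value $t$ of $W$, the slice $L_i:=W^{-1}(t)\cap X_i^\vee$ carries the residual map $w_i$ and is the rank $1$ LG model of $(D_i,Z)$, where $Z:=D_0\cap D_1=D_0\cap D_2$, with generic fibre the mirror $Z^\vee$ of $Z$. Thus $W^{-1}(t)$ is precisely the Doran--Harder--Thompson gluing of $L_1$ and $L_2$ along $Z^\vee$, i.e. the mirror $D^\vee$ of the Tyurin-degenerate anticanonical $D$. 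Now a Mayer--Vietoris computation applies: the overlap $X_1^\vee\cap X_2^\vee$ is fibred over $\mathbb C_W\times(\text{annulus})$ with fibre $Z^\vee$, so $\chi(X_1^\vee\cap X_2^\vee)=\chi(\mathbb C_W)\,\chi(\text{annulus})\,\chi(Z^\vee)=0$ because $\chi(\text{annulus})=0$; hence $\chi(X^\vee)=\chi(X_1^\vee)+\chi(X_2^\vee)$.

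To finish I would feed in the mirror Euler-characteristic identities for the pieces. For each higher rank model the total space is mirror to the log Calabi--Yau complement, giving $\chi(X_i^\vee)=(-1)^{\mathrm d}\chi\!\left(X_i\setminus(D_0\cup D_i)\right)$, while the generic fibre, being the mirror of $D$, satisfies $\chi(W^{-1}(t))=(-1)^{\mathrm d-1}\chi(D)$ with $\chi(D)=\chi(D_1)+\chi(D_2)-2\chi(Z)$ (the $A$-side formula applied to the Tyurin degeneration of $D$). Expanding $\chi(X_i\setminus(D_0\cup D_i))=\chi(X_i)-\chi(D_0)-\chi(D_i)+\chi(Z)$ by inclusion--exclusion and forming $\chi(X^\vee)-\chi(W^{-1}(t))$, all of the $\chi(Z)$, $\chi(D_1)$ and $\chi(D_2)$ terms cancel, leaving $(-1)^{\mathrm d}\bigl(\chi(X_1)+\chi(X_2)-2\chi(D_0)\bigr)=(-1)^{\mathrm d}\chi(X)$.

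The main obstacle is the $B$-side bookkeeping rather than the arithmetic. I must pin down the gluing precisely enough to justify that the overlap is a product-type family over an annulus (so its Euler characteristic vanishes) and that the residual slice $L_i$ is genuinely the rank $1$ LG model of $(D_i,Z)$. Equally, the two mirror inputs---that the total space of a higher rank LG model computes $(-1)^{\mathrm d}$ times the Euler characteristic of the complement of the anticanonical divisor, and that the generic fibre is the mirror of $D$---must be in place with the correct signs; tracking the parities $(-1)^{\mathrm d}$, $(-1)^{\mathrm d-1}$ and $(-1)^{\mathrm d-2}$ through the Calabi--Yau mirror relations for $Z$, $D_i$ and $D$ is where sign errors would most easily creep in.
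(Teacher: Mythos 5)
Your argument is correct and rests on the same three pillars as the paper's proof of Theorem \ref{thm:topo-gluing}: the smoothing formula $\chi(X)=\chi(X_1)+\chi(X_2)-2\chi(D_0)$ (the paper cites Lee, Proposition IV.6, rather than re-deriving it from the $xy=t$ local model, but the content is identical), a Mayer--Vietoris argument in which the overlap contributes nothing because it fibres over an annulus, and unproven mirror Euler-characteristic identities for the building blocks. The difference is one of bookkeeping. The paper runs a \emph{relative} Mayer--Vietoris sequence directly on the pair $(X^\vee,W^{-1}(t))$, obtaining $\chi(X^\vee,W^{-1}(t))=\chi(X_1^\vee,h_{12}^{-1}(t_1))+\chi(X_2^\vee,h_{22}^{-1}(t_2))$, and then invokes the single mirror input $\chi(X_i^\vee,h_{i2}^{-1}(t_i))=(-1)^{\mathrm d}\chi(X_i\setminus D_0)$; this keeps $\chi(D_i)$ and $\chi(Z)$ out of the computation entirely and does not require the DHT fibre identity for the first equality. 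You instead split off the fibre, use absolute Mayer--Vietoris on $X^\vee$, and feed in two mirror inputs --- $\chi(X_i^\vee)=(-1)^{\mathrm d}\chi(X_i\setminus(D_0\cup D_i))$ for the total spaces and the DHT identity $\chi(W^{-1}(t))=(-1)^{\mathrm d-1}\chi(D)$ for the fibre --- after which $\chi(D_1)$, $\chi(D_2)$ and $\chi(Z)$ cancel. The two sets of mirror inputs are equivalent (your identity for $\chi(X_i^\vee)$ minus the corresponding identity for the generic fibre of $h_{i2}$ recovers the paper's relative one), and both are accepted at the same heuristic level in the paper, so neither route is more rigorous; the paper's relative version is marginally more economical, while yours makes the role of the fibre $W^{-1}(t)$ as the DHT-glued mirror of $D$ explicit, which is a useful consistency check against the second equality of Theorem \ref{thm:topo-gluing}.
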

We refer to Theorem \ref{thm:topo-gluing} for the precise statement.

In \cite{DKY}, the Doran-Harder-Thompson conjecture is proved at the level of generalized functional invariants as well as periods for Calabi-Yau complete intersections in toric varieties. In this paper, we generalize it to the quasi-Fano complete intersections in toric varieties. In Section  \ref{sec:example-1} and Section \ref{sec:example-2}, we study two explicit examples in detail. In Section \ref{sec:example-1}, we study a complete intersection of bidegrees $(3,1)$ and $(1,1)$ in $\mathbb P^4\times \mathbb P^1$. In Section \ref{sec:example-2}, we study the quartic threefold in $\mathbb P^4$. In Section \ref{sec:toric}, we study toric complete intersections in general. We show that the generalized functional invariants of $(X^\vee,W)$, $(D^\vee,W_0)$, $(X_1^\vee,h_1)$ and $(X_2^\vee,h_2)$ satisfy a product relation (see Proposition \ref{prop-toric-gen-func-inv-'}). Using this product relation among generalized functional invariants, we obtain the relation among periods.
\begin{theorem}[=Theorem \ref{thm:gluing-period-general}]\label{thm-intro-period}
The following Hadamard product relation holds for relative periods of $(X^\vee,W)$, $(D^\vee,W_0)$, $(X_1^\vee,h_1)$ and $(X_2^\vee,h_2)$:
\[
f_0^{X}(q)\star_q f_0^{D_0}(q)=\frac{1}{2\pi i}\oint f_0^{X_1}(q,y)\star_q f_0^{X_2}(q,y)\frac{dy}{y}.
\]
\end{theorem}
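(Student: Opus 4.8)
The plan is to reduce the claimed Hadamard identity to a coefficientwise statement and then feed in the product relation among generalized functional invariants from Proposition \ref{prop-toric-gen-func-inv-'}. First I would recall, from the period formulas established in Section \ref{sec:toric}, that each relative period is the generating series of constant-term extractions of powers of its generalized functional invariant. Writing these out, one has $f_0^{X}(q)=\sum_{d\ge 0}c_d^{X}q^d$ and $f_0^{D_0}(q)=\sum_{d\ge 0}c_d^{D_0}q^d$ with $c_d^{X}=\mathrm{CT}_{\mathbf z}\big[(\Phi^{X})^d\big]$ and $c_d^{D_0}=\mathrm{CT}_{\mathbf z}\big[(\Phi^{D_0})^d\big]$, while the two rank-$2$ periods carry the extra gluing variable $y$ recording the degree along $D_0$, so that $f_0^{X_i}(q,y)=\sum_{d\ge 0}c_d^{X_i}(y)\,q^d$ with $c_d^{X_i}(y)=\mathrm{CT}_{\mathbf z}\big[(\Phi^{X_i})^d\big]$ a Laurent polynomial in $y$.

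With this notation the two sides become explicit power series in $q$. On the left, the Hadamard product gives
\[
f_0^{X}(q)\star_q f_0^{D_0}(q)=\sum_{d\ge 0}c_d^{X}\,c_d^{D_0}\,q^d,
\]
and on the right, performing the Hadamard product in $q$ first and then the contour integral in $y$ gives
\[
\frac{1}{2\pi i}\oint f_0^{X_1}(q,y)\star_q f_0^{X_2}(q,y)\,\frac{dy}{y}=\sum_{d\ge 0}\Big(\mathrm{CT}_{y}\big[c_d^{X_1}(y)\,c_d^{X_2}(y)\big]\Big)q^d,
\]
since $\tfrac{1}{2\pi i}\oint(\cdot)\tfrac{dy}{y}$ extracts the $y$-constant term. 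Thus it suffices to prove, for each $d\ge 0$, the numerical identity
\[
c_d^{X}\,c_d^{D_0}=\mathrm{CT}_{y}\big[c_d^{X_1}(y)\,c_d^{X_2}(y)\big].
\]

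To establish this I would invoke the product relation of Proposition \ref{prop-toric-gen-func-inv-'}, which expresses $\Phi^{X}\,\Phi^{D_0}$ in terms of $\Phi^{X_1}$ and $\Phi^{X_2}$ once the $D_0$-direction of the hybrid potentials is identified with the variable $y$. Raising the relation to the $d$-th power and applying the constant-term operator, the main point is that the extraction factorizes along the splitting of the toric variables induced by the degeneration: the $\mathbf z$-constant term separates into the $X$- and $D_0$-contributions, while the $y$-constant term (equivalently the residue $\tfrac{1}{2\pi i}\oint\tfrac{dy}{y}$) is exactly the operation that glues the monomials of $(\Phi^{X_1})^d$ and $(\Phi^{X_2})^d$ along their shared boundary $D_0$. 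Matching the multinomial coefficients produced by expanding the powers then yields the coefficientwise identity above.

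The main obstacle is the combinatorial bookkeeping in this last step: one must check that the variable $y$ supplied by the second potential coordinate coincides with the Hadamard/residue variable, that the anticanonical normalization of $X$ together with $D_0$ reproduces precisely the product of the two hybrid normalizations of $X_1$ and $X_2$, and that interchanging the constant-term extraction with the $y$-contour integral is valid. Once the factorization of the constant-term operator is in place, convergence is a formal matter and the Hadamard product relation follows.
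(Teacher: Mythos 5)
Your reduction of the identity to a coefficientwise statement, and the observation that $\tfrac{1}{2\pi i}\oint(\cdot)\tfrac{dy}{y}$ is constant-term extraction in $y$, are both correct and match what the paper does implicitly. But the core of your argument --- deriving the coefficient identity $c_d^{X}c_d^{D_0}=\mathrm{CT}_y\bigl[c_d^{X_1}(y)c_d^{X_2}(y)\bigr]$ from Proposition \ref{prop-toric-gen-func-inv-'} by ``raising the relation to the $d$-th power and applying the constant-term operator,'' with the claim that ``the extraction factorizes along the splitting of the toric variables'' --- is a genuine gap, not a deferred computation. Constant-term extraction does not factor over products of Laurent polynomials in general, and you give no reason why it should here. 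Moreover, the product relation $\lambda_i'\lambda_{i,0}=\lambda_{i,1}'\lambda_{i,2}'$ among generalized functional invariants is by itself insufficient to produce the period identity: the periods are not bare pullbacks along the $\lambda$'s but come with Jacobian-type prefactors from the change of variables (the factors $\tfrac{1}{1-y}$ and $\tfrac{1}{1-q_{0,2}/y_2}$ in Sections \ref{sec:toric-period} and \ref{sec:example-1}), and it is precisely these prefactors that generate the binomial coefficients $\tfrac{(d_0+\langle\rho_{0,1},d\rangle)!}{d_0!\,\langle\rho_{0,1},d\rangle!}$ in $f_0^{\tilde X_2}$. Without them the coefficient identity is false, and your proposal never accounts for them (the phrase about ``anticanonical normalizations'' does not substitute for tracking them).

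What the paper actually does is more concrete and avoids any appeal to a factorization principle: it computes all four periods in closed hypergeometric form (as explicit ratios of factorials in $\langle\rho_l,d\rangle$, $\langle\mathbf D_j,d\rangle$, etc., obtained by expanding the prefactors and the pullbacks of $f_0^{D_0}$ along the functional invariants), and then verifies the Hadamard relation term by term. The decisive step is that $f_0^{X_1}$ contributes $y^{-\langle\rho_{0,2},d\rangle}$ while $f_0^{\tilde X_2}$ contributes $y^{d_0}$, so the residue pins $d_0=\langle\rho_{0,2},d\rangle$, after which the factorials recombine via $\langle\rho_{0,1},d\rangle!\,\langle\rho_{0,2},d\rangle!\cdot\binom{\langle\rho_{0,1},d\rangle+\langle\rho_{0,2},d\rangle}{\langle\rho_{0,2},d\rangle}=\langle\rho_{0},d\rangle!$. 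To repair your argument you would need to either carry out this explicit expansion (at which point Proposition \ref{prop-toric-gen-func-inv-'} is bookkeeping rather than the engine of the proof), or else actually prove the factorization property of the constant-term operator you are invoking, including the role of the prefactors.
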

The relation among bases of solutions to Picard-Fuchs equations holds in a similar form.
\begin{theorem}[Theorem \ref{thm:general-case-I-function}]\label{thm-intro-I-function}
The Hadamard product relation among the bases of solutions to Picard-Fuchs equations is
\[
I^{X}(q)\star_q I^{D_0}(q)=\frac{1}{2\pi i}\oint I^{X_1}(q,y)\star_q I^{X_2}(q,y)\frac{dy}{y}.
\]
\end{theorem}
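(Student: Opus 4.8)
The plan is to prove Theorem \ref{thm:general-case-I-function} in close parallel with the period relation of Theorem \ref{thm:gluing-period-general}, upgrading the scalar statement for the holomorphic period $f_0$ to the full basis of solutions recorded by the I-function. Both statements are ultimately governed by the same combinatorial input, namely the product relation among generalized functional invariants established in Proposition \ref{prop-toric-gen-func-inv-'}: this relation determines the Picard--Fuchs operators on each side, and the fundamental period $f_0$ is one distinguished (normalized holomorphic) solution of the corresponding operator. The task, then, is to propagate the identity from $f_0$ to the remaining, logarithmic, solutions in the Frobenius basis.

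First I would recall the explicit hypergeometric presentation of the I-functions $I^{X}$, $I^{D_0}$, $I^{X_1}$ and $I^{X_2}$ for toric complete intersections, as in \cite{DKY}. Each is a $q$-series, graded by the degree lattice, whose summand in a fixed degree is a ratio of Pochhammer symbols (equivalently, Gamma factors) whose arguments are exactly the linear forms that define the generalized functional invariant. The natural device for passing to the whole solution space is the Frobenius method: one introduces the exponent-shifted series $\tilde{f}_0(q;\rho)$ obtained by replacing each monomial $q^{n}$ by $q^{n+\rho}$ and deforming the Gamma factors accordingly, so that the basis of solutions to the Picard--Fuchs equation is precisely the collection of coefficients in the Taylor expansion of $\tilde{f}_0$ about $\rho=0$, i.e.\ the derivatives $\partial_\rho^{k}\tilde{f}_0|_{\rho=0}$. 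Under this presentation, the period relation of Theorem \ref{thm:gluing-period-general} is the specialization $\rho=0$ of a deformed identity, and the I-function relation is its full $\rho$-expansion.

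Next I would establish the relation at the deformed level. Inserting the shift parameter $\rho$ into the explicit summands, the product relation of Proposition \ref{prop-toric-gen-func-inv-'} factors the degree-$d$ summand on the left as a product of $X_1$- and $X_2$-contributions, with the auxiliary variable $y$ recording the multiplicity along the common divisor $D_0$; the contour integral $\tfrac{1}{2\pi i}\oint(\,\cdot\,)\tfrac{dy}{y}$ extracts the residue at $y=0$, selecting the balanced terms in which the two pieces glue along $D_0$. Since the Hadamard product $\star_q$ is linear and defined coefficientwise in $q$, and the residue extraction in $y$ is likewise linear, both operations commute with $\partial_\rho$. Thus, once the deformed identity $\tilde{f}_0^{X}\star_q\tilde{f}_0^{D_0}=\tfrac{1}{2\pi i}\oint \tilde{f}_0^{X_1}\star_q\tilde{f}_0^{X_2}\,\tfrac{dy}{y}$ is verified, differentiating in $\rho$ and setting $\rho=0$ yields the claimed relation simultaneously in every solution component, which assembles into the I-function identity of Theorem \ref{thm:general-case-I-function}.

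The main obstacle will be the bookkeeping at the deformed level rather than any one specialization. Concretely, one must confirm that the product relation of Proposition \ref{prop-toric-gen-func-inv-'} is compatible with the exponent shift $n\mapsto n+\rho$ and with the residue in $y$, so that the factorization persists uniformly in $\rho$ and not merely on the bottom (holomorphic) piece of the grading. This requires making precise how the Hadamard product extends to the $\rho$-deformed series with shifted exponents, and checking that the Hadamard variable $q$ and the gluing variable $y$ do not interfere under the residue, i.e.\ that the two commute with $\partial_\rho$ as claimed. A secondary point is to verify that the gluing produces exactly the correct number and structure of solutions on both sides, so that the $\rho$-expansions match term by term. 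Once this compatibility is in place, the degreewise Gamma-function manipulation underlying the period relation carries over verbatim, and the theorem follows.
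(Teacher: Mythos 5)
Your proposal is correct and follows essentially the same route as the paper: the paper's $I$-functions are exactly your $\rho$-deformed Frobenius series with the deformation parameters realized as the nilpotent classes $H$, $P$ (resp.\ $p_i$, $p_0$), so ``verify the deformed identity and expand in $\rho$'' is the same degreewise Pochhammer-factor computation plus residue extraction already used for the holomorphic periods in Theorem \ref{thm:gluing-period-general}. The bookkeeping issue you flag as the main obstacle --- the interaction of the shifted exponents with the residue in $y$ --- is handled in the paper by the prescription $p_0=\rho_{0,2}$ (together with treating $\log q_i$ as independent variables), which cancels the $\log y$ contributions of the prefactors of $I^{X_1}$ and $I^{\tilde X_2}$ so that the residue acts as a clean coefficient extraction.
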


The main difference between Theorem \ref{thm-intro-period}, Theorem \ref{thm-intro-I-function} and \cite{DKY}*{Theorem 1.1 and Theorem 1.2} is the following. In \cite{DKY}, absolute periods are obtained as the gluing (i.e., the residue integral of the Hadamard product) of two relative periods of rank $1$ LG models. In Theorem \ref{thm-intro-period} and Theorem \ref{thm-intro-I-function}, we glue relative periods of two rank $2$ LG models to obtain the relative periods for rank $1$ LG models. By the relative mirror theorem of \cite{TY20b}, relative periods of higher rank LG models are mirror to the genus zero Gromov-Witten invariants of simple normal crossings pairs defined in \cite{TY20c}.

Besides considering an LG model as a gluing of rank $2$ LG models, one may also consider an LG model as a fibrewise compactification of a rank $2$ LG model. More precisely, one can consider a quasi-Fano variety $X$ with a smooth anticanonical divisor $D$ or a simple normal crossing anticanonical divisor $D_1+D_2$. Then the LG model mirrors to $(X,D)$ should be considered as fibrewise compactification of the rank $2$ LG model mirrors to $(X,D_1+D_2)$. In Section \ref{sec:deg-normal-cone}, we consider a degeneration to the normal cone of the quasi-Fano variety $X$ with respect to the divisor $D_1$. In this case, gluing two rank $2$ LG models can be considered as providing a fibrewise compactification of a non-proper LG model of $(X,D_1+D_2)$ to obtain a proper LG model of $(X,D)$. In other words,

\begin{conjecture}[=Conjecture \ref{conj-compactification}]
The ordinary LG model of $(X,D)$ is a (partial) compactification of the rank $2$ LG model of $(X,D_1+D_2)$ and the compactification can be obtained by gluing the rank $2$ LG mirror of $(\mathbb P_{D_1}(N_{D_1}\oplus \mathcal O),D_1+\tilde D_1)$ to the rank $2$ mirror LG model of $(X,D_1+D_2)$.
\end{conjecture}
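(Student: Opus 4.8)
The plan is to derive Conjecture~\ref{conj-compactification} as a special instance of the gluing principle of Conjecture~\ref{conj-gluing-1}, applied to the deformation to the normal cone, and then to upgrade the resulting topological gluing to an honest fibrewise compactification. First I would set up the geometry. The deformation to the normal cone of $X$ along $D_1$ is the blow-up $\mathcal X = \on{Bl}_{D_1 \times \{0\}}(X \times \mathbb A^1) \to \mathbb A^1$. Since $D_1$ is a divisor, the general fibre is isomorphic to $X$, while the central fibre is the semi-stable union $X \cup_{D_1} P$ with $P = \mathbb P_{D_1}(N_{D_1} \oplus \mathcal O)$, the two components meeting transversally along $D_1 \subset X$, identified with the zero section of $P$, and $\tilde D_1$ the section at infinity.

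I would check, by adjunction on the blow-up, that the induced boundary pairs on the two components are $(X, D_1 + D_2)$ and $(P, D_1 + \tilde D_1)$, glued along $D_0 = D_1$, and that the smooth anticanonical $D \subset X$ degenerates into the Tyurin-type union $D_2 \cup_{D_1} \tilde D_1$. This is precisely the semi-stable degeneration of $(X,D)$ appearing in Conjecture~\ref{conj-gluing-1}, with $X_1 = X$ and $X_2 = P$. With this identification, the gluing assertion of Conjecture~\ref{conj-compactification} --- that the rank~$1$ LG model of $(X,D)$ is obtained by gluing the rank~$2$ LG models of $(X, D_1 + D_2)$ and of $(P, D_1 + \tilde D_1)$ --- becomes the conclusion of Conjecture~\ref{conj-gluing-1} for this degeneration.

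Next I would compute the rank~$2$ LG model of the projective-bundle piece. Since $P \to D_1$ is a $\mathbb P^1$-bundle and the two boundary divisors $D_1$ and $\tilde D_1$ are its disjoint zero and infinity sections, the corner $D_1 \cap \tilde D_1$ is empty and the multi-potential $h_P = (h_{P,1}, h_{P,2})$ should be fibred over the LG model of $D_1$: one coordinate records the $\mathbb P^1$-fibre direction, with the two sections appearing as its behaviour at $0$ and $\infty$, while the restriction to a fibre recovers the LG model of $D_1$. Consequently, gluing the $P$-piece is a trivial product along $D_1$ in the base directions and adds a compactifying $\mathbb P^1$ --- or rather its affine part together with the fibre over infinity --- in the remaining direction. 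I would then make precise that the component of the rank~$2$ model of $(X, D_1 + D_2)$ associated to $D_2$ agrees with $W$ away from the boundary, and that the fibres glued in from $P$ are exactly the missing fibres of $W$ over the divisor at infinity; this identifies the gluing with the asserted (partial) fibrewise compactification.

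Finally I would assemble supporting evidence in the toric complete-intersection setting, where all four LG models are explicit. The Euler-number identity of Theorem~\ref{thm:topo-gluing} gives a first consistency check, and the product relation for generalized functional invariants of Proposition~\ref{prop-toric-gen-func-inv-'}, together with the Hadamard product relations of Theorem~\ref{thm:gluing-period-general} and Theorem~\ref{thm:general-case-I-function}, should specialize --- using the explicit, factorized functional invariant of the $\mathbb P^1$-bundle piece --- to the statement that the periods of $W$ are the compactified periods of $h$. The main obstacle is this last upgrade: promoting the topological gluing guaranteed by Conjecture~\ref{conj-gluing-1} to a genuine complex-analytic partial compactification for which $W$ extends as a proper map whose added fibres are exactly those contributed by the $P$-piece. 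This requires controlling both LG models near $\tilde D_1$ and verifying that the mirror of the $\mathbb P^1$-fibre direction of $P$ is precisely the direction in which the fibres of $h$ must be compactified --- a matching that is transparent in the toric examples of Sections~\ref{sec:example-1} and~\ref{sec:example-2} but subtle in general.
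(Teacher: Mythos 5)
The statement you are asked to prove is a conjecture: the paper does not prove Conjecture~\ref{conj-compactification} anywhere. What the paper does in Section~\ref{sec:deg-normal-cone} is exactly the heuristic you reproduce --- it observes that the degeneration to the normal cone of $X$ along $D_1$ has central fibre $X\cup_{D_1}\mathbb P_{D_1}(N_{D_1}\oplus\mathcal O)$ with boundary pairs $(X,D_1+D_2)$ and $(\mathbb P_{D_1}(N_{D_1}\oplus\mathcal O),D_1+\tilde D_1)$, specializes Conjecture~\ref{conj-gluing-1} to this degeneration, and then \emph{states} Conjecture~\ref{conj-compactification} as the resulting expectation, supported by the explicit computation of functional invariants and periods in Section~\ref{sec:ex-deg-normal-cone} (Equations (\ref{func-inv-normal-cone}) and (\ref{gluing-period-degen-normal-cone})). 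Your proposal follows the same route, and your final paragraph honestly concedes the real gap: Conjecture~\ref{conj-gluing-1} is itself only established at the level of Euler characteristics (Theorem~\ref{thm:topo-gluing}) and of periods for toric complete intersections, so ``deriving'' one conjecture from the other does not constitute a proof, and the analytic upgrade to a genuine fibrewise compactification on which $W$ extends properly is precisely what is missing. On that score your write-up is a fair reconstruction of the paper's motivation, not more.

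There is, however, a concrete geometric error in your second step. You assert that $D_1$ and $\tilde D_1$ are the disjoint zero and infinity sections of the $\mathbb P^1$-bundle $P=\mathbb P_{D_1}(N_{D_1}\oplus\mathcal O)$, that the corner $D_1\cap\tilde D_1$ is empty, and that consequently the rank~$2$ LG model of $(P,D_1+\tilde D_1)$ is a trivial product over the LG model of $D_1$. This contradicts the paper's own description: $\tilde D_1$ is a \emph{blow-up} of $D_1$ (in the example of Section~\ref{sec:ex-deg-normal-cone} it is the blow-up of the cubic surface $S$ along a complete intersection, realized as a hypersurface of class $\mathcal O_Y(1)\otimes\pi^*\mathcal O_S(1)$ in $Y=\mathbb P(N_S\oplus\mathcal O_S)$), so it is not isomorphic to $D_1$, it is not a section disjoint from the zero section, and $D_1+\tilde D_1$ is a genuinely normal crossings anticanonical divisor with nonempty corner. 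Correspondingly, the rank~$2$ LG model of the bundle piece is not a product: its functional invariant $\lambda_2=\frac{q_{1,2}y_2^3}{(y-q_{2,2}/y_2)q_{2,2}^3}$ in (\ref{func-inv-normal-cone}) couples the two base directions nontrivially, and it is exactly this coupling that makes the product relation $\lambda\cdot\lambda_0=\lambda_1\cdot\lambda_2$ and the Hadamard relation (\ref{gluing-period-degen-normal-cone}) come out right. If the corner were empty as you claim, the gluing would not reproduce the correct compactified superpotential, so this step of your argument would fail even as a heuristic.
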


Furthermore, we use this conjecture to study the iterative Tyurin degenerations of Calabi-Yau varieties. For Calabi-Yau varieties admitting such degenerations, we predict that the mirror has a fibration structure over higher dimensional bases. For Calabi-Yau complete intersections in toric varieties that admit two-step Tyurin degenerations, the mirror will have a Calabi-Yau fibration over $\mathbb P^1\times \mathbb P^1$. 

We illustrate this with an explicit example in Section \ref{sec:iterating-DHT}, where we study an elliptic fibration structure of the mirror quintic threefold over a $\mathbb P^1\times \mathbb P^1$ base.
The fibration is by mirror cubic elliptic curves, mirror to  the cubic curves arising in the pair of two-step Tyurin degenerations: $(5)\to (1)+(4)\to (1)+(1)+(3)$ and $(5)\to (2)+(3)\to (1)+(1)+(3)$. Moreover, the first of these degenerations corresponds to a mirror quartic K3-surface fibration and the second to a mirror sextic K3-surface fibration. There is a perfect correspondence between these K3-fibrations on the mirror quintic threefold compatible with the mirror cubic fibration and the two-step Tyurin degenerations of the quintic itself via the refinement of the nef partition.  More generally, we expect the following.
\begin{conjecture}[=Conjecture \ref{conj-higher-dim-base}]
If a Calabi-Yau variety $X$ (or a log Calabi-Yau variety $X\setminus D$) admits a semi-stable degeneration, connected to a point of maximal unipotent monodromy, such that the cone over the dual intersection complex of the central fibre is of dimension $k$, then the mirror of $X$ (or $X\setminus D$) admits a Calabi-Yau fibration structure with a $k$-dimensional base.
\end{conjecture}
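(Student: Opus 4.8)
The statement is a conjecture rather than a theorem, so the plan is to outline a program that would establish it in the cases accessible to the methods of this paper, namely Calabi--Yau complete intersections in toric varieties, and to isolate the obstruction to a fully general proof. The natural strategy is induction on $k$, the dimension of the cone over the dual intersection complex, reading $k$ as the number of independent directions of an iterated semi-stable Tyurin degeneration. The base case $k=1$ is precisely the Doran--Harder--Thompson picture in its log form: a single Tyurin degeneration of $X$ (or $X\setminus D$) into two pieces glued along $D_0$ exhibits, via Conjecture \ref{conj-gluing-1}, the mirror as a rank $1$ Landau--Ginzburg model glued from two rank $2$ models, and this glued model fibers over $\mathbb P^1$ with fibre the mirror of $D_0$, a Calabi--Yau of dimension $\dim X-1$. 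This is a $1$-dimensional base, matching $k=1$.

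For the inductive step I would exploit the key structural feature of higher rank Landau--Ginzburg models: the generic fibre of a rank $r$ model is itself a rank $r-1$ model. Concretely, a $k$-step iterated degeneration of $X$ induces, on the divisor $D_0$ appearing at the first step, a compatible $(k-1)$-step degeneration; in the mirror quintic of Section \ref{sec:iterating-DHT} this is the degeneration of the quartic K3 fibre into pieces glued along a cubic elliptic curve. By the induction hypothesis the mirror of $D_0$ therefore carries a Calabi--Yau fibration over a $(k-1)$-dimensional base, and composing this with the $\mathbb P^1$-fibration produced by the first application of Conjecture \ref{conj-gluing-1} yields a Calabi--Yau fibration of the mirror of $X$ over a $k$-dimensional base (an iterated $\mathbb P^1$-bundle, in the toric complete intersection case simply a product of projective lines, as in the $\mathbb P^1\times\mathbb P^1$ base of the mirror quintic). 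The fibres drop in dimension by one at each stage, so that after $k$ steps they are Calabi--Yau of dimension $\dim X-k$; the compatibility of the nef partition refinement with the tower of degenerations is exactly what makes the iteration well defined.

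The quantitative backbone of this argument is supplied by the Hadamard product relations of Theorem \ref{thm-intro-period} and Theorem \ref{thm-intro-I-function}, together with the product relation among generalized functional invariants of Proposition \ref{prop-toric-gen-func-inv-'}. Iterating these relations in $k$ auxiliary variables $y_1,\dots,y_k$ should express the period (respectively the solution to the Picard--Fuchs system) of $X$ as an iterated residue of a $k$-fold Hadamard product of the relative periods of the rank $k+1$ Landau--Ginzburg pieces; each residue integration $\frac{1}{2\pi i}\oint\,\frac{dy_j}{y_j}$ corresponds to one $\mathbb P^1$ factor of the base and so certifies, at the level of Hodge-theoretic invariants, that the composite fibration is the correct one. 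Through the relative mirror theorem of \cite{TY20b} this matches the Gromov--Witten theory of the simple normal crossings pairs of \cite{TY20c}, providing an enumerative consistency check that I would carry out explicitly for the two-step quintic and for general toric two-step degenerations.

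The main obstacle is twofold. First, each inductive step currently rests on the still-conjectural gluing statement Conjecture \ref{conj-gluing-1} and on the fibrewise compactification Conjecture \ref{conj-compactification}; controlling the compactification of the non-proper Landau--Ginzburg fibres uniformly over a $k$-dimensional base, so that the composite total space is genuinely a smooth Calabi--Yau with the expected fibration, is the most delicate point, since it requires the rank $2$ gluing data attached to each edge of the dual complex to assemble coherently over the higher strata (triple and higher intersections). Second, the general statement quantifies over arbitrary Calabi--Yau varieties and arbitrary maximally unipotent semi-stable degenerations, whereas the tools here --- toric mirror symmetry, generalized functional invariants, and the Hadamard calculus --- only reach the complete intersection setting; establishing that the dimension of the cone over the dual intersection complex always equals the base dimension of the resulting fibration, independently of the toric presentation, would require a degeneration-independent (e.g. Berkovich-skeletal) reformulation that I expect to be the decisive step toward the full conjecture.
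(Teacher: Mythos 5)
This statement is Conjecture \ref{conj-higher-dim-base}, which the paper does not prove: it is offered as a conjecture, supported only by the iterated Tyurin degenerations of the quintic in Section \ref{sec:iterating-DHT} (yielding the mirror-cubic elliptic fibration of the mirror quintic over $\mathbb P^1\times\mathbb P^1$), the iterated product relations among generalized functional invariants and Hadamard products of periods, and the remark that the Doran--Thompson four-term sequence mirrors Clemens--Schmid. Your program --- induction on $k$ via repeated application of Conjecture \ref{conj-gluing-1}, with iterated residues of Hadamard products as the quantitative certificate in the toric complete intersection case --- assembles essentially the same evidence the paper does, and you correctly flag that every inductive step remains conditional on the gluing and compactification conjectures, so there is nothing to fault beyond the conjectural status the paper itself acknowledges.
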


To understand more general degenerations of Calabi-Yau varieties or more general iterative Tyurin degenerations of Calabi-Yau varieties, it is also necessary to consider degenerations of simple normal crossings log Calabi-Yau pairs. Recall that Conjecture \ref{conj-gluing-1} considers a degeneration of a smooth pair. The conjecture can be generalized to the case when we consider a degeneration of a quasi-Fano variety $X$ with a simple normal crossing anticanonical divisor $D$. In this case, the mirror of $(X,D)$ itself is a higher rank LG model. Roughly speaking, the conjecture can be stated as follows. 

\begin{conjecture}\label{conj-gluing-n}
Given a quasi-Fano variety $X$ with a simple normal crossings anticanonical divisor $D$, we consider a semi-stable degeneration of a quasi-Fano variety $X$ into $X_1\cup_{D_0}X_2$. The mirror of $(X,D)$, which is a rank $n$ LG model, can be obtained by topologically gluing two rank $(n+1)$ LG models which are mirror to $X_1$ and $X_2$ with their corresponding simple normal crossing divisors. 
\end{conjecture}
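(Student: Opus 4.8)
The plan is to support Conjecture \ref{conj-gluing-n} by the same three-layered strategy already used for the rank $1$ case of Conjecture \ref{conj-gluing-1}: a purely topological comparison of Euler characteristics, a product relation among generalized functional invariants in the toric setting, and a Hadamard product relation among relative periods and bases of solutions of the associated Picard-Fuchs systems. First I would fix notation and make the gluing construction precise. Write the simple normal crossings anticanonical divisor as $D = D^{(1)} + \cdots + D^{(n)}$, so that the mirror of $(X,D)$ is a rank $n$ LG model $(X^\vee, h)$ with multi-potential $h \colon X^\vee \to \mathbb{C}^n$ in the sense of Definition \ref{defn:hybrid-LG}. Under the semi-stable degeneration of $X$ with central fibre $X_1 \cup_{D_0} X_2$, each component $D^{(j)}$ degenerates compatibly and the new double locus $D_0 = X_1 \cap X_2$ supplies one further irreducible divisor on each piece; thus $(X_i, D_0 + D|_{X_i})$ carries a rank $(n+1)$ LG model $(X_i^\vee, h_i)$ with $h_i \colon X_i^\vee \to \mathbb{C}^{n+1}$, the extra factor being the direction along which the two pieces are to be glued.

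For the topological layer I would generalize Theorem \ref{thm:topo-gluing}. Euler characteristic is additive over the strata cut out by the components of a multi-potential, so I would stratify $X^\vee$ and each $X_i^\vee$ by the fibres of $h$ and $h_i$ and match the two sides by inclusion-exclusion. On the geometric side the semi-stable degeneration supplies the relation $\chi(X) = \chi(X_1) + \chi(X_2) - 2\chi(D_0)$, which is exactly the bookkeeping that the gluing of LG models must reproduce. The target identity is then a multi-variable refinement of the displayed formula $\chi(X) = (-1)^{\mathrm d}\chi(X^\vee, W^{-1}(t))$, with $W$ taken to be one chosen component of $h$, the remaining $n-1$ potentials held at regular values, and the extra residue direction of the $h_i$ absorbed into the gluing.

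In the toric complete-intersection setting I would lift the argument to generalized functional invariants and periods. The plan is to extend Proposition \ref{prop-toric-gen-func-inv-'} to show that the generalized functional invariants of the rank $(n+1)$ models $(X_1^\vee, h_1)$ and $(X_2^\vee, h_2)$ satisfy a product relation matching that of the rank $n$ model $(X^\vee, h)$ together with its boundary model along $D_0$. Iterating Theorem \ref{thm:gluing-period-general} and Theorem \ref{thm:general-case-I-function} would then give a Hadamard product relation of the shape
\[
f_0^{X}(q) \star_q f_0^{D_0}(q) = \frac{1}{2\pi i}\oint f_0^{X_1}(q,y)\star_q f_0^{X_2}(q,y)\,\frac{dy}{y},
\]
where now $q$ carries the extra directions tracked by the higher-rank potentials and $y$ remains the single residue variable introduced by the one-parameter degeneration; the analogous relation for the bases $I^{X}$, $I^{X_i}$ of Picard-Fuchs solutions follows in the same way.

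The main obstacle is controlling the full multi-potential structure rather than a single potential direction. In the rank $1$ argument only one map $W$ must be matched across the gluing, whereas here the construction must simultaneously respect all $n$ components of $h$ and their compatibility with every stratum of $D$, including the mixed strata where the new divisor $D_0$ meets the pre-existing components $D^{(j)}$. Making the stratified Euler-characteristic bookkeeping compatible with the residue integral in all the Hadamard variables at once, and keeping the boundary model along $D_0$ consistent across the iteration needed for multi-step Tyurin degenerations, is where the essential difficulty lies; in the non-toric case, where the functional-invariant and period machinery of Theorem \ref{thm:gluing-period-general} is unavailable, one would expect to reach only the topological statement.
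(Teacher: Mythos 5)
Your overall strategy coincides with what the paper actually does for this conjecture: it is stated as a conjecture and is supported (not proved) by the same three layers you propose --- an Euler-characteristic identity (Theorem \ref{thm:topo-gluing-2}), a product relation among generalized functional invariants, and Hadamard product relations for periods and $I$-functions, all carried out explicitly for the case of gluing two rank $3$ models to a rank $2$ model and asserted to extend to general toric complete intersections.

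There is, however, one concrete gap in your topological layer. You propose to target ``a multi-variable refinement of $\chi(X)=(-1)^{\mathrm d}\chi(X^\vee,W^{-1}(t))$, with $W$ taken to be one chosen component of $h$, the remaining $n-1$ potentials held at regular values.'' Taking relative cohomology with respect to the fibre of a \emph{single} component $h_j$ is not the right invariant: heuristically $\chi(X^\vee,h_j^{-1}(t_j))=(-1)^{\mathrm d}\chi(X\setminus(\text{the other boundary divisors}))$, so the identity you would be led to write down is off by the Euler characteristics of the remaining components of $D$ and their intersections. The paper resolves this by first \emph{defining} the Euler characteristic (and Hodge numbers) of a rank $n$ LG model as relative cohomology with respect to the union of generic fibres of \emph{all} components, $h^{p,q}(X^\vee,h):=h^{p,q}(X^\vee,h_1^{-1}(t_1)\cup\cdots\cup h_n^{-1}(t_n))$ (Definition \ref{def-hodge-hybrid}), checks via inclusion-exclusion that this is the mirror-compatible choice, and only then runs the relative Mayer--Vietoris argument. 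That argument also needs Lee's smoothing formula applied not only to $X$ but to each component $D^{(j)}$ of $D$ (each of which undergoes its own Tyurin-type degeneration), which your inclusion-exclusion sketch would have to incorporate explicitly. Without fixing the definition of the invariant first, the bookkeeping you describe does not close up.
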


\begin{remark}
Recall that the generic fibre of a rank $n$ LG model is a codimension $n$ Calabi-Yau and the mirrors of Calabi-Yau varieties are Calabi-Yau varieties of the same dimension. Therefore, we may consider the mirrors of Calabi-Yau varieties as rank $0$ LG models, that is, LG models with $0$-dimensional bases which are just Calabi-Yau varieties without superpotentials. From this point of view, we may consider the Doran-Harder-Thompson conjecture as the rank $0$ case of Conjecture \ref{conj-gluing-n}. Conjecture \ref{conj-gluing-1} is a rank $1$ case of Conjecture \ref{conj-gluing-n}.
\end{remark}

To prove a topological gluing formula, we need to have a proper definition of Hodge numbers of higher rank LG models. For simplicity, we consider the case when $n=2$. Its generalization to $n>2$ is straightforward.
Hodge numbers of a rank $2$ LG model $(X^\vee,h=(h_1,h_2))$ are defined in Definition \ref{def-hodge-hybrid} (See also \cite{Lee24}):
\[
h^{p,q}(X^\vee,(h_1,h_2)):=h^{p,q}(X^\vee, h_1^{-1}(t_1)\cup h_2^{-1}(t_2)),
\]
where $t=(t_1,t_2)$ is a regular value of $(h_1,h_2)$.
One may consider this as a generalization of the Hodge numbers of LG models of \cite{KKP}. 
With this definition of Hodge numbers, we prove a topological gluing formula relating Euler numbers (see Theorem \ref{thm:topo-gluing-2}):
\[
\chi(X)=(-1)^{\mathrm d}(\chi(X^\vee,h_1^{-1}(t_1)\cup h_2^{-1}(t_2))).
\]
The relations among their generalized functional invariants and relative periods for toric complete intersections hold just like the rank $1$ case.

\subsection*{Acknowledgements}
C.D. is supported by the Natural Sciences and Engineering Research Council of Canada (NSERC).
F.Y. is supported by  the EPSRC grant EP/R013349/1, Research Council of Norway grant no. 202277, and the European Union’s Horizon 2020 research and innovation programme under the Marie Skłodowska-Curie grant agreement 101025386. We would like to thank Lawrence Barrott, Andrew Harder and Alan Thompson for helpful discussions.

\section{Gluing rank $2$ LG models}\label{sec:gluing}

In \cite{DHT}, Doran-Harder-Thompson conjectured a relation between mirrors of Calabi-Yau varieties and mirrors of quasi-Fano varieties under Tyurin degenerations of Calabi-Yau varieties. In this section, we consider a generalization of \cite{DHT} to degenerations of quasi-Fano varieties.

\subsection{Set-up}
\begin{definition}
A smooth variety $X$ is called quasi-Fano if its anticanonical linear system contains a smooth Calabi-Yau member and $H^i(X,\mathcal O_X)=0$ for all $i>0$.
\end{definition}

\begin{defn}[\cite{DHT}, Definition 2.1]\label{def-LG}
A (proper) Landau-Ginzburg (LG) model of a quasi-Fano variety $X$ is a pair $(X^\vee,W)$ consisting of a K\"ahler manifold $X^\vee$ satisfying $h^1(X^\vee)=0$ and a proper map $W:X^\vee\rightarrow \mathbb C$, where $W$ is called the superpotential.
\end{defn}

The generic fibre of $W$ is expected to be mirror to a smooth anticanonical divisor $D\subset X$. The pair $(X^\vee, W)$ is considered as a mirror to the pair $(X,D)$. It was just called LG model in \cite{DHT}, but we will call it a proper LG model or an ordinary LG model here. A natural question to ask is how to extend this mirror symmetry to the case when $D$ has more than one irreducible component. For this, we need the following generalization of LG models.

\begin{definition}\label{def-hybrid-LG-inductive}
An LG model of rank $1$ is the ordinary LG model in Definition \ref{def-LG}. For $n\geq 2$, an LG model of rank $n$ is a pair $(X^\vee, h)$, where
\[
h:=(h_1,\ldots,h_n): X^\vee\rightarrow \mathbb C^n,
\]
such that 
\begin{itemize}
    \item the generic fibre of $h_i$ together with the restriction of $\hat{h}_i:=(h_1,\ldots,h_{i-1},h_{i+1},\ldots,h_n)$ to this fibre is an LG model of rank $(n-1)$.
    \item by composing with the summation map $\Sigma:\mathbb C^n\rightarrow \mathbb C$, we get a non-proper ordinary LG model
    \[
    W:=\Sigma\circ h: X^\vee\rightarrow \mathbb C.
    \]
\end{itemize}
\end{definition}

\begin{remark}
In this paper, we consider the case when the base of a rank $n$ LG model is $\mathbb C^n$. In general, we can allow the base being more general $n$-dimensional varieties. This will be studied elsewhere.
\end{remark}

Now we assume that $D=D_1+D_2$ is a simple normal crossings anticanonical divisor of $X$ and $D_{12}:=D_1\cap D_2$ is smooth. In \cite{Lee24}, he defined the hybrid LG model which is expected to be mirror to the pair $(X,D_1+D_2)$ where $X$ is a Fano variety. Here, we consider the hybrid LG models for quasi-Fano varieties.

\begin{definition}[\cite{Lee24}, Definition 3.1]\label{defn:hybrid-LG}
A hybrid LG model of a quasi-Fano variety $X$ with a simple normal crossings anticanonical divisor $D=D_1+D_2$ is a pair
\[
(X^\vee,h=(h_1,h_2):X^\vee\rightarrow \mathbb C^2),
\]
where $X^\vee$ is a K\"ahler manifold 
such that
\begin{itemize}
    \item a generic fibre of $h_1$, denoted $D^\vee_1$, with 
    \[
    h|_{D^\vee_1}=h_2: D^\vee_1\rightarrow \mathbb C
    \]
    is mirror to $(D_1,D_{12})$;
   \item    a generic fibre of $h_2$, denoted  $D^\vee_2$, with 
    \[
    h|_{D^\vee_2}=h_1: D^\vee_2\rightarrow \mathbb C
    \]
    is mirror to $(D_2,D_{12})$;
    \item by composing with the summation map $\Sigma:\mathbb C^2\rightarrow \mathbb C$, we get a non-proper ordinary LG model mirror to $(X,D)$
    \[
    W:=\Sigma\circ h: X^\vee\rightarrow \mathbb C.
    \]
\end{itemize}
\end{definition}
We will refer to hybrid LG models in Definition \ref{defn:hybrid-LG} as rank $2$ LG models.

\subsection{Topological gluing}\label{sec:top-gluing}
Given a quasi-Fano variety $X$ with its smooth anticanonical divisor $D$, we consider a semi-stable degeneration of $X$ into $X_1\cup_{D_0}X_2$ such that it also gives a Tyurin degeneration of $D$ into $D_1\cup_{D_{12}} D_2$ where $D_1\subset X_1$, $D_{2}\subset X_2$ and $D_{12}=D_1\cap D_0=D_2\cap D_0=D_1\cap D_2$.

Let $(X_1^\vee,h_1=(h_{11},h_{12}))$ and $(X_2^\vee,h_2=(h_{21},h_{22}))$ be rank $2$ LG models mirror to $(X_{1},D_0+D_1\in|-K_{X_1}|)$ and $(X_2,D_0+D_2\in |-K_{X_2}|)$ respectively. We will glue the rank $2$ LG models along their first factors to form an LG model $(X^\vee, W)$ with a fibration over $\mathbb P^1$:
    \[
\begin{tikzcd}
   X^\vee 
   \arrow[r,->,left,"W"] \arrow[swap]{d}{\pi} & \quad \mathbb C \\
 \mathbb P^1 &
\end{tikzcd}
\]
We choose $r_1$ and $r_2$ such that $|\lambda_1|\leq r_1$ for every $\lambda_1$ in the critical locus of $h_{11}$ and $|\lambda_2|\leq r_2$ for every $\lambda_2$ in the critical locus of $h_{21}$. We assume that the generic fibres of $h_{11}$ and $h_{21}$ are topologically the same LG model which mirrors to $(D_0,D_{12})$. 

Let $t=(t_1,t_2)$ be a regular value of $h_i$. The monodromy symplectmorphism $\phi_{11}$ associated to a clockwise loop around infinity of $t_1$ on $h_1^{-1}(t)$ should be the same as the monodromy sympectmorphism $\phi_{21}^{-1}$ associated to a counter-clockwise loop around infinity of $t_1$ on $h_2^{-1}(t)$. For each $t_2$, then we have a rank $1$ LG model $(h_{i2}^{-1}(t_2),h_{i1})$. Then we describe the gluing process following \cite{DHT}.
we choose local trivialization of $h_{i2}^{-1}(t_2)$, described in \cite{DHT}*{Section 2.2}, over $U_i=\{z\in \mathbb C: |z|>r_i\}$ and let $Q_{i,t_2}=h_{i1}^{-1}(U_i,t_2)$. Following \cite{DHT}, one can glue $h_{12}^{-1}(t_2)$ to $h_{22}^{-1}(t_2)$ along $Q_{1,t_2}$ and $Q_{2,t_2}$ to produce the fibre of $X^\vee$ over $t_2$. As in \cite{DHT}, the gluing produces a $C^\infty$ manifold. Let $t_2\in \mathbb C$ varies, then we obtain $X^\vee$ with the superpotential $W$ such that $W|_{X_1^\vee}=h_{12}$ and $W|_{X_2^\vee}=h_{22}$. The gluing respects the fibrations $h_{11}$ and $h_{21}$, so $X^\vee$ is equipped with a fibration over $\mathbb P^1$.

\begin{remark}
We can consider a rank $2$ LG model as a family of ordinary LG models. Then gluing two rank $2$ LG models can be considered as gluing two families of ordinary LG models to obtain a family of Calabi-Yau varieties and this family of Calabi-Yau varieties is the LG mirror of $(X,D)$. Therefore, the gluing picture here is a family version of the original gluing picture of \cite{DHT}. 
\end{remark}

\begin{remark}
There is an alternative view of the gluing picture. Instead of considering the gluing of two rank $2$ LG models to the mirror of the quasi-Fano variety $X$, we can also consider mirror symmetry in the complement of a smooth anticanonical divisor $D$ in $X$ by forgetting the superpotential $W$. For $X_1\setminus D_1$ (or $X_2\setminus D_2$), the mirror still has a superpotential coming from $D_0$. Then the mirror of the non-compact Calabi-Yau $X\setminus D$ can be obtained by gluing two (ordinary) mirror LG models of two non-compact varieties $X_1\setminus D_1$ and $X_2\setminus D_2$.
\end{remark}

We have the following mirror relation between the Euler numbers of quasi-Fano varieties and the LG model constructed by the gluing picture described above.

\begin{theorem}\label{thm:topo-gluing}
Let $X_1$ and $X_2$ be $\mathrm d$-dimensional quasi-Fano varieties which contain the same quasi-Fano hypersurface $D_0$, such that 
\[
K_{X_1}|_{D_0}=-K_{X_2}|_{D_0} \quad \text{and } K_{D_1}|_{D_{12}}=-K_{D_2}|_{D_{12}},
\]
where $D_{12}=D_0\cap D_1=D_0\cap D_2$.
Let $(X_1^\vee,h_1=(h_{11},h_{12}))$ and $(X_2^\vee,h_2=(h_{21},h_{22}))$ be rank $2$ LG models mirror to $(X_{1},D_0+D_1\in|-K_{X_1}|)$ and $(X_2,D_0+D_2\in |-K_{X_2}|)$ respectively. Suppose that the fibres of $h_1$ and $h_2$ are topologically the same Calabi-Yau manifold, which is topologically mirror to $D_{12}$.
Let $X$ be a quasi-Fano variety obtained from $X_1\cup_{D_0}X_2$ by smoothing and let $D\subset X$ be a Calabi-Yau variety obtained from $D_1\cup_{D_{12}} D_2$ by smoothing. 
Let $(X^\vee,W)$ be the LG model obtained by gluing rank $2$ LG models $(X_1^\vee,h_1)$ and $(X_2^\vee,h_2)$. Then
\[
\chi(X)=(-1)^{\mathrm d}\chi(X^\vee,W^{-1}(t)), \quad \text{and } \chi(D)=(-1)^{\mathrm d-1}\chi(W^{-1}(t))
\]
for $t$ a regular value of $W$, where $\chi$ is the Euler number.
\end{theorem}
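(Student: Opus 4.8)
The plan is to reduce both equalities to additivity of the Euler number, feeding in the mirror Euler-number relations already available for the constituent pieces. Write $\chi(A,B):=\chi(A)-\chi(B)$, let $F$ denote the common Calabi--Yau fibre of $h_1$ and $h_2$ (topologically mirror to $D_{12}$), let $Z_i:=h_{i1}^{-1}(s_1)$ be a generic fibre of the first factor (a rank $1$ LG model of $(D_0,D_{12})$), and let $Y_i:=h_{i2}^{-1}(t)=W^{-1}(t)\cap X_i^\vee$ be a generic fibre of the second factor (a rank $1$ LG model of $(D_i,D_{12})$); note $Z_i\cap Y_i=F$. First I would record the two smoothing formulas. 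The central fibre $X_1\cup_{D_0}X_2$ is a normal crossings union of two smooth pieces meeting along the smooth divisor $D_0$, so the smoothing $X$ is the union of the $X_i\setminus D_0$ glued along a neck that retracts onto an $S^1$-bundle over $D_0$; since the Euler number of that bundle is $\chi(S^1)\chi(D_0)=0$, Mayer--Vietoris gives $\chi(X)=\chi(X_1)+\chi(X_2)-2\chi(D_0)$. The same computation for the induced Tyurin degeneration $D_1\cup_{D_{12}}D_2$ yields $\chi(D)=\chi(D_1)+\chi(D_2)-2\chi(D_{12})$.

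Next I would transport the same additivity to the mirror side. By construction $X^\vee$ is assembled from $X_1^\vee$ and $X_2^\vee$ by identifying the loci $Q_i=h_{i1}^{-1}(\{|z|>r_i\})$; each $Q_i$ retracts onto the mapping torus of the monodromy $\phi_{i1}$ over the circle at infinity, hence fibres over $S^1$ and has $\chi(Q_i)=0$. Inclusion--exclusion therefore gives $\chi(X^\vee)=\chi(X_1^\vee)+\chi(X_2^\vee)$, and restricting the identical splitting to a regular fibre $W^{-1}(t)=h_{12}^{-1}(t)\cup h_{22}^{-1}(t)$ gives $\chi(W^{-1}(t))=\chi(Y_1)+\chi(Y_2)$. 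Subtracting,
\[
\chi(X^\vee,W^{-1}(t))=\chi(X_1^\vee,Y_1)+\chi(X_2^\vee,Y_2).
\]

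It remains to evaluate each relative term on a single piece. For the fibre identity I only need the rank $1$ mirror relation $\chi(D_i)=(-1)^{\mathrm d-1}\chi(Y_i,F)$ for $(D_i,D_{12})$ and the Calabi--Yau relation $\chi(F)=(-1)^{\mathrm d}\chi(D_{12})$; substituting $\chi(W^{-1}(t))=\chi(Y_1)+\chi(Y_2)$ and using the smoothing formula for $D$, the $\chi(F)$ contributions combine to $-2\chi(D_{12})$ and I obtain $\chi(D)=(-1)^{\mathrm d-1}\chi(W^{-1}(t))$. For the total-space identity I use the inclusion--exclusion identity $\chi(X_i^\vee,Y_i)=\chi(X_i^\vee,Z_i\cup Y_i)+\chi(Z_i,F)$, valid since $Z_i\cap Y_i=F$, together with the rank $1$ relation $\chi(Z_i,F)=(-1)^{\mathrm d-1}\chi(D_0)$ for $(D_0,D_{12})$ and the rank $2$ mirror Euler-number relation $\chi(X_i^\vee,Z_i\cup Y_i)=(-1)^{\mathrm d}\chi(X_i)$ for the pieces $(X_i,D_0+D_i)$ (Theorem \ref{thm:topo-gluing-2}). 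This yields $\chi(X_i^\vee,Y_i)=(-1)^{\mathrm d}\big(\chi(X_i)-\chi(D_0)\big)$; summing over $i$ and comparing with the smoothing formula for $X$ gives $\chi(X^\vee,W^{-1}(t))=(-1)^{\mathrm d}\chi(X)$, which is the first assertion.

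The essential difficulty is geometric rather than algebraic: I must verify that the identifications producing $X^\vee$ (and $X$) really glue the pieces only along the $S^1$-fibred necks, so that every correction term in Mayer--Vietoris carries Euler number $0$, and that this additivity is legitimate for the non-proper factors $h_{i2}$ --- which is guaranteed once one works with compactly supported Euler characteristics, these agreeing with the ordinary ones on the tame spaces at hand. A secondary point to check is that the monodromy matching $\phi_{11}=\phi_{21}^{-1}$ is exactly the condition making the two necks glue compatibly, so that the $\mathbb P^1$-fibration $\pi$, and hence the global splittings used above, are well defined.
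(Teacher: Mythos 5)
Your argument follows essentially the same route as the paper's proof: relative Mayer--Vietoris (equivalently, inclusion--exclusion with the gluing region contributing zero because it fibres over an annulus/circle) gives $\chi(X^\vee,W^{-1}(t))=\chi(X_1^\vee,Y_1)+\chi(X_2^\vee,Y_2)$, and the conclusion then follows from Lee's smoothing formula $\chi(X)=\chi(X_1)+\chi(X_2)-2\chi(D_0)$ once one knows $\chi(X_i^\vee,Y_i)=(-1)^{\mathrm d}\chi(X_i\setminus D_0)$; your $\chi(D)$ computation likewise reproduces the argument of \cite{DHT}*{Theorem 2.3}, which the paper simply cites. The only substantive discrepancy is in how you justify the per-piece input: you split $\chi(X_i^\vee,Y_i)=\chi(X_i^\vee,Z_i\cup Y_i)+\chi(Z_i,F)$ and invoke ``$\chi(X_i^\vee,Z_i\cup Y_i)=(-1)^{\mathrm d}\chi(X_i)$ (Theorem \ref{thm:topo-gluing-2})'', but that theorem does not supply this: it concerns rank $2$ LG models \emph{obtained by gluing rank $3$ ones} under a further degeneration, and is not applicable to the given mirrors $(X_i^\vee,h_i)$. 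Your two inputs are, via inclusion--exclusion, exactly equivalent to the paper's single assumption (its equation asserting $\chi(X_i^\vee,h_{i2}^{-1}(t_i))=(-1)^{\mathrm d}\chi(X_i\setminus D_0)$, justified there by declaring $(X_i^\vee,h_{i2})$ to be the mirror of $X_i\setminus D_0$), so your proof is on the same logical footing as the paper's --- but you should state this as a mirror-symmetry hypothesis on the pieces rather than attribute it to a theorem that does not prove it.
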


\begin{proof}

The equality $\chi(D)=(-1)^{\mathrm d-1}\chi(W^{-1}(t))$ was proved in \cite{DHT}*{Theorem 2.3}. It remains to prove $\chi(X)=(-1)^{\mathrm d}\chi(X^\vee,W^{-1}(t))$.

The gluing picture gives the following relative Mayer-Vietoris sequence
\begin{align}\label{rel-mv-seq}
\cdots \rightarrow H^j(X^\vee,W^{-1}(t);\mathbb C)\rightarrow H^j(X_1^\vee,h_{12}^{-1}(t_1);\mathbb C)\oplus H^j(X_2^\vee,h_{22}^{-1}(t_2);\mathbb C) \\
\notag \rightarrow H^j(X_1^\vee\cap X_2^\vee, h_{12}^{-1}(t_1)\cap h_{22}^{-1}(t_2);\mathbb C)\rightarrow \cdots.
\end{align}
Recall that $X_1^\vee\cap X_2^\vee$ and $h_{12}^{-1}(t_1)\cap h_{22}^{-1}(t_2)$ are fibrations over annuli. Their cohomology can be computed using the Wang sequence \cite{PS}*{Theorem 11.33} and we conclude that $\chi(X_1^\vee\cap X_2^\vee, h_{12}^{-1}(t_1)\cap h_{22}^{-1}(t_2))=0$. Therefore,
we have
\begin{align}\label{chi-X-W}
\chi(X^\vee,W^{-1}(t))=\chi(X_1^\vee,h_{12}^{-1}(t_1))+\chi(X_2^\vee,h_{22}^{-1}(t_2)).
\end{align}
Note that $(X_i^\vee,h_{i2})$ should be considered as the mirror of the complement of the divisor $D_0$ of $X_i$, because we forget about $h_{i1}$ which corresponds to the divisor $D_0$. Therefore, we have
\begin{align}\label{chi-X-i}
\chi(X_i^\vee,h_{i2}^{-1}(t_i))=(-1)^{\mathrm d}\chi(X_i\setminus D_0)=(-1)^{\mathrm d}\chi(X_i)-(-1)^{\mathrm d}\chi(D_0).
\end{align}
Combining (\ref{chi-X-W}) and (\ref{chi-X-i}), we have
\begin{align}\label{chi-X-W-1}
\chi(X^\vee,W^{-1}(t))=(-1)^{\mathrm d}(\chi(X_1)+\chi(X_2)-2\chi(D_0)).
\end{align}
On the other hand, by \cite{Lee06}*{Proposition IV. 6},
\begin{align}\label{chi-X-12}
\chi(X)=\chi(X_1)+\chi(X_2)-2\chi(D_0).
\end{align}
Therefore, (\ref{chi-X-W-1}) and (\ref{chi-X-12}) together imply the identity:
\[
\chi(X)=(-1)^{\mathrm d}\chi(X^\vee,W^{-1}(t))
\]
\end{proof}

\subsection{Periods}
The definition of relative periods for LG models is given in \cite{DKY}. One can define periods for higher rank LG models similarly.

\begin{definition}[\cite{DKY}, Definition 2.2]
Let $f\colon\mathcal{X}\to B$ be a family of Calabi-Yau manifolds. 
The morphism $g\colon B\to\mathcal{M}$, taking a point in the base to the corresponding point in the moduli space $\mathcal{M}$ of the fibre is called the \emph{generalized functional invariant} of the family. 
\end{definition}

\begin{defn}\label{def-period}
Given a (rank $n$) LG model and a choice of holomorphic $d$-form $\omega$, we define the \emph{periods} of the (rank $n$) LG model \emph{relative to the (rank $n$) superpotential and $\omega$} to be the period functions associated to the varying fibres of the (rank $n$) LG model obtained by integrating transcendental cycles across the $d$-form $\omega$.
\end{defn}

We refer to \cite{DKY} for more details about the definition of relative periods and how to compute them via generalized functional invariants. In mirror symmetry, under suitable assumptions, periods are mirror to generating functions of genus zero Gromov-Witten invariants (see, for example, \cite{Givental98} and \cite{LLY}). For LG models which are mirror to smooth pairs $(X,D)$, the relative periods considered in \cite{DKY} are mirror to generating functions of genus zero Gromov-Witten invariants of $(X,D)$ by the relative mirror theorem in \cite{FTY}. Now, if we have higher rank LG models which are mirror to a simple normal crossings pairs $(X,D)$, then the relative periods that we consider in this paper are mirror to genus zero Gromov-Witten invariants of $(X,D)$ (\cite{TY20b}, see also Section \ref{sec:GW}).

\section{Example 1: a complete intersection in $\mathbb P^4\times \mathbb P^1$}\label{sec:example-1}

In this section, we consider an example which is analogous to the example considered in \cite{DKY}*{Section 3}. Let $\tilde {Q}_4$ be a complete intersection of bidegrees (3,1) and (1,1) in $\mathbb P^4\times \mathbb P^1$. The threefold $\tilde {Q}_4$ admits a degeneration
\[
\tilde Q_4\leadsto \tilde X_1\cup_{C} \tilde X_2,
\]
where 
\begin{itemize}
    \item $\tilde X_1$ is a hypersurface of bidegree $(3,1)$ in $\mathbb P^3\times \mathbb P^1$;
    \item $\tilde X_2$ is a complete intersection of bidegrees $(3,0), (1,1)$ in $\mathbb P^4\times \mathbb P^1$;
    \item $C$ is a smooth cubic surface in $\mathbb P^3$.
\end{itemize} Indeed, $\tilde X_1$ is the blow-up of $\mathbb P^3$ along the complete intersection of two cubic surfaces and $\tilde X_2$ is the blow-up of a cubic threefold $C_3$ along the complete intersection of two hyperplanes (degree one hypersurfaces in $C_3$). 

The smooth anticanonical $K3$ surface in $\tilde Q_4$ is a complete intersection of bidegrees $(3,1)$ and $(1,1)$ in $\mathbb P^3\times \mathbb P^1$. The degeneration of $\tilde Q_4$ leads to a Tyurin degeneration of this $K3$ surface into  a hypersurface $D_1$ of bidegree $(3,1)$ in $\mathbb P^2\times \mathbb P^1$ and a complete intersection $D_2$ of bidegrees $(3,0), (1,1)$ in $\mathbb P^3\times \mathbb P^1$. This Tyurin degeneration of the $K3$ surface is the degeneration considered in \cite{DKY}*{Section 3} in one dimensional lower (i.e. in dimension two).

\subsection{Functional invariants}\label{sec:gen-fun-inv-conifold}
The mirrors of $\tilde Q_4$, $\tilde X_1$ and $\tilde X_2$ (relative to their smooth anticanonical divisors) can be written down explicitly following \cite{Givental98}. The mirror of $\tilde Q_4$ is an LG model $(\tilde Q_4^\vee,W)$ where $\tilde Q_4^\vee$ is the compactification of 
\[
\left\{(x_0,x_1,x_2,x_3,y)\in (\mathbb C^*)^5\left|
x_1+x_2+\frac{q_1}{x_0x_1x_2x_3}+\frac{q_0}{y}=1;
x_3+y =1\right.\right\}
\]
and the superpotential 
\begin{align*}
W_1: \tilde Q_4^\vee &\rightarrow \mathbb C\\
 (x_0,x_1,x_2,x_3,y)&\mapsto x_0.    
\end{align*}

The LG model for $\tilde X_1$ is 
\begin{align*}
W_1: \tilde X_1^\vee &\rightarrow \mathbb C\\
 (x_0,x_1,x_2,y_1)&\mapsto y_1+x_0,
\end{align*}
where $\tilde X_1^\vee$ is the fibrewise compactification of
\[
\left\{(x_0,x_1,x_2,y_1)\in(\mathbb C^*)^4\left|x_1+x_2+\frac{q_{1,1}}{x_0x_1x_2}+\frac{q_{0,1}}{y_1}=1\right.\right\}.
\]
\begin{remark}
   By fibrewise compactification, we refer to the Calabi--Yau compactification in the spirit of \cite{Przhi13}, \cite{Przhi17} and \cite{Przhi18}. The computation that we explain here does not depend on the fibrewise compactification, so we do not specifically mention how the fibrewise compactification should be taken. 
\end{remark}
The LG model for $\tilde X_2$ is
\begin{align*}
W_2: \tilde X_2^\vee&\rightarrow \mathbb C\\
 (x_0,x_1,x_2,y_2)&\mapsto  y_2+x_0,
\end{align*}
where $\tilde X_2^\vee$ is the fibrewise compactification of
\[
\left\{(x_0,x_1,x_2,y_2)\in(\mathbb C^*)^4\left|x_1+x_2+\frac{q_{1,2}}{x_0x_1x_2(1-q_{0,2}/y_2)}=1\right.\right\}.
\]

The rank $2$ LG models for $(\tilde X_1,D_1+C)$ and $(\tilde X_2,D_2+C)$ are the following:
\begin{align*}
h_1: \tilde X_1^\vee &\rightarrow \mathbb C^2\\
 (x_0,x_1,x_2,y_1)&\mapsto (x_0,y_1)
\end{align*}
and
\begin{align*}
h_2: \tilde X_2^\vee&\rightarrow \mathbb C^2\\
 (x_0,x_1,x_2,y_2)&\mapsto  (x_0,y_2),
\end{align*}
respectively.

The LG mirror of the cubic surface $C$ (relative to its smooth anticanonical divisor $D_{12}=C\cap D_1=C\cap D_2$) is:
\begin{align*}
W_0: C^\vee&\rightarrow \mathbb C\\
 (x_0,x_1,x_2)&\mapsto  x_0,
\end{align*}
where $C^\vee$ is the fibrewise compactification of
\[
\left\{(x_0,x_1,x_2)\in(\mathbb C^*)^3\left|x_1+x_2+\frac{q_{1,0}}{x_0x_1x_2}=1\right.\right\}.
\]
By performing an appropriate change of variables, we see that all four of these families, which are mirrors to $(\tilde Q_4,K3)$, $(\tilde X_1,D_1+C)$, $(\tilde X_2,D_2+C)$ and $(C,D_{12})$ respectively, are fibred by the mirror cubic curve $E^\vee$ whose defining equation is the following:
\begin{align}\label{def-E-vee}
\left\{(\tilde{x}_1,\tilde{x}_2)\in(\mathbb C^*)^2\left|\tilde{x}_1+\tilde{x}_2+\frac{\tilde\lambda}{\tilde{x}_1\tilde{x}_2}=1\right.\right\}.
\end{align}
This allows us to conclude that they are families of mirror cubic curves and read off their functional invariants. 
For example, for the family $\tilde X_1^\vee$, we make the following change of variables:
\[
\tilde{x}_1=\frac{x_1}{1-q_{0,1}/y_1}, \tilde{x}_2=\frac{x_2}{1-q_{0,1}/y_1}, \tilde \lambda=\frac{q_{1,1}}{x_0(1-q_{0,1}/y_1)^3},
\]
to obtain the mirror cubic family. We then read off the functional invariant for $\tilde X_1^\vee$ as $\lambda_1=\frac{q_{1,1}}{x_0(1-q_{0,1}/y_1)^3}$. Similarly, we compute the functional invariants for $C^\vee$, $\tilde{Q}_4^\vee$ and $\tilde X_2^\vee$ by making appropriate changes of variables to match with the cubic mirror family.
We obtain the following functional invariants for $\tilde{Q}_4^\vee$, $C^\vee$, $\tilde X_1^\vee$ and $\tilde X_2^\vee$ respectively:

\[
\lambda=\frac{q_1}{x_0(1-y)(1-q_0/y)^3}, \quad \lambda_0= \frac{q_{1,0}}{x_0}, \quad \lambda_1=\frac{q_{1,1}}{x_0(1-q_{0,1}/y_1)^3}, \quad \lambda_2=\frac{q_{1,2}}{x_0(1-q_{0,2}/y_2)}.
\]

We glue the second factors of the bases of rank $2$ LG models through the following change of variables:

\begin{align}\label{identification-coni-tran}
q_1=q_{1,0}=q_{1,1}=q_{1,2},\quad q_0=q_{0,1},\quad y=y_1=q_{0,2}/y_2.
\end{align}

\begin{proposition}
Under the identification (\ref{identification-coni-tran}), the following product relation holds among the functional invariants:
\begin{align}\label{equ:gen-fun-inv}
\lambda\cdot \lambda_0=\lambda_1\cdot\lambda_2.
\end{align}
\end{proposition}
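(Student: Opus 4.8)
The plan is to prove the identity by direct substitution, since all four functional invariants are explicit rational expressions and the identification \eqref{identification-coni-tran} specifies exactly how to match every variable appearing in them. First I would substitute $q_{1,0}=q_{1,1}=q_{1,2}=q_1$ into the numerators, so that both sides of \eqref{equ:gen-fun-inv} become a product of two copies of $q_1$ divided by $x_0^2$ times some product of binomial factors; the numerators and the powers of $x_0$ in the denominators then agree automatically, and the entire content of the proposition reduces to matching the binomial denominators.

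Next I would compare denominators factor by factor. The product $\lambda_1\cdot\lambda_2$ carries the denominator $(1-q_{0,1}/y_1)^3(1-q_{0,2}/y_2)$, while $\lambda\cdot\lambda_0$ carries $(1-y)(1-q_0/y)^3$. Under the substitutions $q_0=q_{0,1}$ and $y=y_1$, the cubic factor $(1-q_{0,1}/y_1)^3$ becomes exactly $(1-q_0/y)^3$, matching the cubic factor on the left. It remains to identify the single linear factors: here I would use $y=q_{0,2}/y_2$ to rewrite $(1-q_{0,2}/y_2)$ as $(1-y)$, which is precisely the remaining factor in $\lambda\cdot\lambda_0$. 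Assembling these, both sides equal $\dfrac{q_1^2}{x_0^2(1-y)(1-q_0/y)^3}$, and the identity follows.

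The computation is entirely routine, so there is no genuine analytic obstacle; the only point requiring care is the bookkeeping of the change of variables, in particular the asymmetric role of the gluing parameter in the relation $y=y_1=q_{0,2}/y_2$. The substitution $y=y_1$ identifies the base coordinate of $\tilde X_1^\vee$ with $y$ directly, whereas the base coordinate of $\tilde X_2^\vee$ is identified through the inversion $y_2=q_{0,2}/y$; this inversion is exactly what converts the single linear factor of $\lambda_2$ into the factor $(1-y)$ needed to absorb the contribution of $\lambda_0$. I would therefore emphasize this inversion as the conceptual heart of the gluing, since it reflects the orientation-reversing matching of the two annular neighborhoods described in the gluing construction of Section \ref{sec:top-gluing}, rather than any computational difficulty.
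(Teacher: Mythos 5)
Your proposal is correct and is exactly the direct substitution the paper intends: the paper states this proposition without proof because the verification is immediate, and your factor-by-factor matching (numerators $q_1^2/x_0^2$, the cubic factor $(1-q_{0,1}/y_1)^3=(1-q_0/y)^3$ via $q_0=q_{0,1}$, $y=y_1$, and the linear factor $(1-q_{0,2}/y_2)=(1-y)$ via the inversion $y=q_{0,2}/y_2$) is precisely that verification. Your closing remark correctly identifies the inversion in the identification of $y_2$ as the only point of substance, reflecting the orientation-reversing gluing of the two annuli.
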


The discriminant loci of these fibrations are determined by the functional invariants. 
Note that the mirror cubic family of elliptic curves has singular fibres of types $\textrm{I}_3,\textrm{I}_1,\textrm{IV}^*$ over the points $0,\frac{1}{3^3},\infty$ respectively. 

\textbf{Functional Invariant $\lambda_1$:}
\begin{itemize}
    \item  $\lambda_1^{-1}(0)=\{y^3=0\}\cup\{(q_{0}-y)^3=\infty\}\cup\{x_0=\infty\}$. The first two components then support $\textrm{I}_9$ fibres while the third component supports $\textrm{I}_3$-fibres. 
    \item $\lambda_1^{-1}(\infty)=\{y^3=\infty\}\cup\{x_0=0\}\cup\{(q_{0}-y)^3=0\}$. 
Since $\lambda_1=\infty$ is a type $\textrm{IV}^*$ fibre, it follows that only the locus $x_0=0$ supports fibres with non-trivial monodromy. 
\item $\lambda_1^{-1}(\frac{1}{3^3})$ is described by the locus $x_0(y-q_{0})^3=3^3q_{1}y^3$ and generically supports $\textrm{I}_1$-fibres. 
\end{itemize}

\textbf{Functional Invariant $\lambda_2$:}

\begin{itemize}
    \item $\lambda_2^{-1}(0)=\{x_0=\infty\}\cup\{1-y=\infty\}$; both components then support $\textrm{I}_3$-fibres. 
    \item $\lambda_2^{-1}(\infty)=\{x_0=0\}\cup\{y=1\}$, both of which support $\textrm{IV}^*$-fibres.
    \item $\lambda_2^{-1}(\frac{1}{3^3})$ is described by the locus $x_0(1-y)=3^3q_{1}$ and generically supports $\textrm{I}_1$-fibres. 
\end{itemize}

\textbf{Functional Invariant $\lambda_0$:}
\begin{itemize}
    \item $\lambda_0^{-1}(0)=\{x_0=\infty\}$ is a locus of $\textrm{I}_3$-fibres.
    \item $\lambda_0^{-1}(\infty)=\{x_0=0\}$ is a locus of $\textrm{IV}^*$-fibres.
    \item $\lambda_0^{-1}(\frac{1}{3^3})$ is described by $x_0=3^3q_1$ and generically supports $\textrm{I}_1$-fibres.
\end{itemize}   

\textbf{Functional Invariant $\lambda$:}
\begin{itemize}
    \item $\lambda^{-1}(0)=\{y^3=0\}\cup\{(y-q_0)^3=\infty\}\cup\{x_0=\infty\}\cup\{(1-y)=\infty\}$ with the first two components supporting $\textrm{I}_9$-fibres and the last two components supporting $\textrm{I}_3$-fibres.
    \item The locus $\lambda^{-1}(\infty)=\{x_0=\infty\}\cup\{y=1\}\cup\{(y-q_0)^3\}$  and the first two components support $\textrm{IV}^*$-fibres and the last components supports smooth fibres.
    \item The locus $\lambda^{-1}(\frac{1}{3^3})$ is described by $x_0(1-y)(y-q_0)^3=3^3q_1y^3$ and supports $\textrm{I}_1$-fibres generically. 
\end{itemize} 

Note that the singular loci for the invariant $\lambda$ is essentially the union of those corresponding to $\lambda_1,\lambda_2$ with the $\lambda_0$ factor eliminating excess branching at both $x_0=0$ and $x_0=\infty$. Therefore one may also write the identity (\ref{equ:gen-fun-inv}) as
\[
\lambda=\frac{\lambda_1\lambda_2}{\lambda_0}.
\]

\subsection{Periods}

Recall that, the holomorphic period for the mirror cubic curve family $E^\vee$, defined in (\ref{def-E-vee}), is
\[
f_0^{E^\vee}(\tilde \lambda)=\sum_{d\geq 0} \frac{(3d)!}{(d!)^3}\tilde{\lambda}^d.
\]
Following \cite{DKY}, the relative period for the LG model $(C^\vee,W_0)$ is
\[
f_0^{C^\vee}(q_{1,0},x_0)=f_0^{E^\vee}(\lambda_0)=\sum_{d\geq 0} \frac{(3d)!}{(d!)^3}(q_{1,0}/x_0)^d.
\]
The relative period for the LG model $(\tilde Q_4^\vee, W)$ can be computed as a residue integral of the pullback of
\[
\frac{1}{(1-y)(1-q_0/y)}f_0^{E^\vee}(\tilde\lambda)
\]
by the functional invariant
\[
\lambda=\frac{q_1}{x_0(1-y)(1-q_0/y)^3}.
\]
We have
\begin{align*}
\frac{1}{(1-y)(1-q_0/y)}f_0^{E^\vee}(\lambda)&=\frac{1}{(1-y)(1-q_0/y)}\sum_{d_1\geq 0}\frac{(3d_1)!}{(d_1!)^3}\left(\frac{q_1}{x_0(1-y)(1-q_0/y)^3}\right)^{d_1}\\
&=\frac{1}{(1-q_0/y)}\sum_{d_1,d_{0,1}\geq 0}\frac{(3d_1)!}{(d_1!)^3}\left(\frac{q_1}{x_0(1-q_0/y)^3}\right)^{d_1}\frac{(d_1+d_{0,1})!}{d_1!d_{0,1}!}(y)^{d_{0,1}}\\
&=\sum_{d_1,d_{0,1},d_{0,2}\geq 0}\frac{(3d_1)!}{(d_1!)^3}\left(q_1/x_0\right)^{d_1}\frac{(d_1+d_{0,1})!}{d_1!d_{0,1}!}\frac{(3d_1+d_{0,2})!}{(3d_1)!d_{0,2}!}(y)^{d_{0,1}}(q_0/y)^{d_{0,2}},
\end{align*}
where we used the identity
$$\frac{1}{(1-x)^{k+1}}=\sum_{d=0}^\infty\frac{(d+k)!}{d!k!}x^d.$$
After taking a residue, we obtain the holomorphic relative period of $(\tilde{Q}_4^\vee,W)$: 
\begin{align}\label{period-Q-4}
\notag f_0^{\tilde Q_4^\vee}(q_1,q_0,x_0)&=\frac{1}{2\pi i}\oint \frac{1}{(1-y)(1-q_0/y)}f_0^{E^\vee}(\lambda) \frac{dy}{y}\\
\notag &=\sum_{d_1,d_0\geq 0}\frac{(3d_1)!}{(d_1!)^3}\left(q_1/x_0\right)^{d_1}\frac{(d_1+d_{0})!}{d_1!d_{0}!}\frac{(3d_1+d_{0})!}{(3d_1)!d_{0}!}(q_0)^{d_{0}}\\
&=\sum_{d_1,d_0\geq 0}\frac{(3d_1+d_{0})!(d_1+d_{0})!}{(d_1!)^4(d_{0}!)^2}(q_1/x_0)^{d_1}q_0^{d_{0}}.
\end{align}

\begin{remark}

The relative period (\ref{period-Q-4}) matches with the computation for the relative period of the LG model $(\tilde Q_4^\vee, W)$ when the relative period is pullback from the period of mirror quartic $K3$ surface family. In that case, the computation will be as follows. The smooth anticanonical divisor of $\tilde {Q}_4$ is a $K3$ surface. The period of the mirror $K3$ surface family is the following:
\begin{align}\label{period-tilde-Q-4}
\sum_{d_1,d_0\geq 0}\frac{(3d_1+d_{0})!(d_1+d_{0})!}{(d_1!)^4(d_{0}!)^2}(q_1)^{d_1}q_0^{d_{0}}.
\end{align}
The generalized functional invariant for $(X^\vee, W)$ with respect to the mirror $K3$ surface family is
\[
\lambda=\frac{q_1}{x_0}.
\]
The relative period can be computed by pulling back the period of the mirror $K3$ surface family via the generalized functional invariants. The resulting relative period is precisely (\ref{period-Q-4}). The reason why these two computations match follows from the iterative structure of periods that we explained in \cite{DKY}*{Section 2.2.1}. Since the mirror $K3$ surface family is fibred by the mirror cubic curve family, the period (\ref{period-tilde-Q-4}) of the mirror $K3$  can be computed as the residue of the period of the mirror cubic via the functional invariant. 
\end{remark}

The relative periods for the rank $2$ LG models can also be computed as pullback of the period of the mirror cubic via functional invariants:
\begin{align*}
f_0^{\tilde X_1}(q_{1,1},q_{0,1},y_1,x_0)=&\frac{1}{(1-q_{0,1}/y_1)}f_0^{E^\vee}(\lambda_1)\\
=&\frac{1}{(1-q_{0,1}/y_1)}\sum_{d_1\geq 0}\frac{(3d_1)!}{(d_1!)^3}\left(\frac{q_{1,1}}{x_0(1-q_{0,1}/y_1)^3}\right)^{d_1}\\
=&\sum_{d_1,d_0\geq 0}\frac{(3d_1)!(3d_1+d_0)!}{(d_1!)^3(3d_1)!d_0!}(q_{1,1}/x_0)^{d_1}(q_{0,1}/y_1)^{d_0}\\
=& \sum_{d_1,d_0\geq 0}\frac{(3d_1+d_0)!}{(d_1!)^3 d_0!}(q_{1,1}/x_0)^{d_1}(q_{0,1}/y_1)^{d_0};
\end{align*}

\begin{align*}
f_0^{\tilde X_2}(q_{1,2},q_{0,2},y_2,x_0)=&\frac{1}{(1-q_{0,2}/y_2)}f_0^{E^\vee}(\lambda_2)\\
=&\frac{1}{(1-q_{0,2}/y_2)}\sum_{d_1,d_0\geq 0}\frac{(3d_1)!}{(d_1!)^3}\left(\frac{q_{1,2}}{x_0(1-q_{0,2}/y_2)}\right)^{d_1}\\
=&\sum_{d_1,d_0\geq 0}\frac{(3d_1)!(d_1+d_0)!}{(d_1!)^3(d_1)!d_0!}(q_{1,2}/x_0)^{d_1}(q_{0,2}/y_2)^{d_0}\\
=& \sum_{d_1,d_0\geq 0}\frac{(3d_1)!(d_1+d_0)!}{(d_1!)^4 d_0!}(q_{1,2}/x_0)^{d_1}(q_{0,2}/y_2)^{d_0}.
\end{align*}

Under the identification (\ref{identification-coni-tran}), we can write relative periods as follows:
\[
f_0^{C^\vee}(q_1,x_0)=f_0^{E^\vee}(\lambda_0)=\sum_{d\geq 0} \frac{(3d)!}{(d!)^3}(q_1/x_0)^d;
\]
\[
f_0^{\tilde Q_4^\vee}(q_1,q_0,x_0)=\sum_{d_1,d_0\geq 0}\frac{(3d_1+d_{0})!(d_1+d_{0})!}{(d_1!)^4(d_{0}!)^2}(q_1/x_0)^{d_1}q_0^{d_{0}};
\]
\[
f_0^{\tilde X_1}(q_1,q_0,y,x_0)= \sum_{d_1,d_0\geq 0}\frac{(3d_1+d_0)!}{(d_1!)^3 d_0!}(q_1/x_0)^{d_1}(q_0/y)^{d_0};
\]
\[
f_0^{\tilde X_2}(q_1,y,x_0)=\sum_{d_1,d_0\geq 0}\frac{(3d_1)!(d_1+d_0)!}{(d_1!)^4 d_0!}(q_1/x_0)^{d_1}(y)^{d_0}.
\]
Relative periods satisfy the Hadamard product formula similar to the Calabi-Yau case.
\begin{theorem}\label{thm-period-coni-tran}
The holomorphic relative periods of rank $2$ LG models can be glued together to form the holomorphic relative period of the LG model $(\tilde Q_4^\vee,W)$ with the correction given by the holomorphic relative period of the LG model $(C^\vee,W_0)$. More precisely, the relation is given by the Hadamard product
\[
f_0^{\tilde Q_4}(q_1,q_0,x_0)\star_{q_1} f_0^{C^\vee}(q_1,x_0)=\frac{1}{2\pi i}\oint f_0^{\tilde X_1}(q_1,q_0,y,x_0)\star_{q_1} f_0^{\tilde X_2}(q_1,y,x_0)\frac{dy}{y},
\]
where $\star_{q_1}$ means the Hadamard product with respect to the variable $q_1$ and we used (\ref{identification-coni-tran}) to identify variables.
\end{theorem}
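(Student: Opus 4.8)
The plan is to prove the Hadamard product relation by direct manipulation of the explicit power series already written down for the four relative periods, reducing the claimed identity to an elementary multinomial coefficient identity combined with a residue extraction in the gluing variable $y$. Recall the Hadamard product $\star_{q_1}$ acts coefficientwise in the variable $q_1$: if $f=\sum a_m q_1^m$ and $g=\sum b_m q_1^m$ (with the remaining variables treated as coefficients), then $f\star_{q_1}g=\sum a_m b_m q_1^m$. The first step is therefore to compute the left-hand side $f_0^{\tilde Q_4}\star_{q_1}f_0^{C^\vee}$ explicitly. Writing out the two series and pairing the coefficients of $(q_1/x_0)^{d_1}$, the $d_0$-summation from $f_0^{\tilde Q_4}$ is untouched while each term acquires the extra factor $(3d_1)!/(d_1!)^3$ coming from the cubic-curve period $f_0^{C^\vee}$. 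This produces a double series in $d_1,d_0$ whose general coefficient I expect to be
\[
\frac{(3d_1+d_0)!\,(d_1+d_0)!\,(3d_1)!}{(d_1!)^7\,(d_0!)^2}.
\]

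The second step is to compute the right-hand side. Here I would first form the ordinary product (over the coefficient variables) inside the integrand, apply the Hadamard product $\star_{q_1}$, and then extract the residue $\frac{1}{2\pi i}\oint(\cdots)\frac{dy}{y}$, which simply selects the coefficient of $y^0$. The two rank-$2$ periods involve $(q_0/y)^{d_0}$ from $f_0^{\tilde X_1}$ and $y^{d_0'}$ from $f_0^{\tilde X_2}$; after the Hadamard product in $q_1$ pairs the two $(q_1/x_0)^{d_1}$-coefficients, the residue in $y$ forces $d_0'=d_0$, collapsing the double $y$-summation to a single index $d_0$ and producing the surviving power $q_0^{d_0}$. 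The resulting coefficient of $(q_1/x_0)^{d_1}q_0^{d_0}$ should be
\[
\frac{(3d_1+d_0)!}{(d_1!)^3\,d_0!}\cdot\frac{(3d_1)!\,(d_1+d_0)!}{(d_1!)^4\,d_0!},
\]
the product of the two rank-$2$ coefficients at matched indices.

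The third step is then to check that the two coefficients agree term by term, which is immediate: both equal $\dfrac{(3d_1+d_0)!\,(d_1+d_0)!\,(3d_1)!}{(d_1!)^7\,(d_0!)^2}$. The bookkeeping that makes this work is the compatibility of the variable identifications in \eqref{identification-coni-tran} with the product relation \eqref{equ:gen-fun-inv} among functional invariants; this is exactly the structural reason the residue picks out matched powers of $y$ and why the $\lambda_0$-factor (the cubic-surface correction) is precisely the cross term needed to reconcile the normalizations. I expect the main obstacle to be purely organizational rather than conceptual: keeping the four summation indices and the three distinct roles of $y$ (as $y_1$, as $q_{0,2}/y_2$, and as the residue variable) straight through the identification \eqref{identification-coni-tran}, and confirming that the residue operation genuinely commutes with the Hadamard product in $q_1$ (which it does, since the two operations act on independent variables). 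Once the indices are aligned, the identity is a one-line factorial comparison. A cleaner alternative, which I would mention as a conceptual check, is to observe that both sides are pullbacks of the single mirror-cubic period $f_0^{E^\vee}$ via the respective functional invariants, so that the Hadamard-product relation is forced by \eqref{equ:gen-fun-inv} together with the iterative residue structure of periods explained in \cite{DKY}*{Section 2.2.1}; this gives a proof that does not require writing out any coefficients at all.
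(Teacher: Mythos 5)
Your proposal is correct and follows essentially the same route as the paper's proof: form the Hadamard product of the two rank-$2$ period series in $q_1$, extract the residue in $y$ to force the two $y$-exponents to coincide, and compare the resulting coefficient $\frac{(3d_1+d_0)!\,(3d_1)!\,(d_1+d_0)!}{(d_1!)^7(d_0!)^2}$ with the coefficientwise product of $f_0^{\tilde Q_4}$ and $f_0^{C^\vee}$. The paper's proof is exactly this direct computation, so no further comparison is needed.
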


\begin{proof}
The computation is straightforward: 
\begin{align*}
    &\frac{1}{2\pi i}\oint f_0^{\tilde X_1}(q_1,q_0,y,x_0)\star_{q_1} f_0^{\tilde X_2}(q_1,y,x_0)\frac{dy}{y}\\
    =&\frac{1}{2\pi i}\oint \sum_{d_1,d_{0,1},d_{0,2}\geq 0}\frac{(3d_1+d_{0,1})!}{(d_1!)^3 d_{0,1}!} \frac{(3d_1)!(d_1+d_{0,2})!}{(d_1!)^4 d_{0,2}!}(q_1/x_0^2)^{d_1}(q_0/y)^{d_{0,1}}(y)^{d_{0,2}}\frac{dy}{y}\\
    =& \sum_{d_1,d_0\geq 0}\frac{(3d_1+d_0)!}{(d_1!)^3 d_0!} \frac{(3d_1)!(d_1+d_0)!}{(d_1!)^4 d_0!}(q_1/x_0^2)^{d_1}q_0^{d_0}\\
    =& f_0^{\tilde Q_4}(q_1,q_0,x_0)\star_{q_1} f_0^{C^\vee}(q_1,x_0).
\end{align*}
\end{proof}

\begin{remark}\label{rmk-absorb-x-0}
Note that, in $f_0^{\tilde Q_4}$, $f_0^{C^\vee}$, $f_0^{\tilde X_1}$ and $f_0^{\tilde X_2}$, variables $q_1$ and $x_0$ always appear together as the factor $q_1/x_0$. We can also simply absorb the factor $1/x_0$ into $q_1$ by rescaling $q_1$. Then Theorem \ref{thm-period-coni-tran} stays the same.  
\end{remark}

Furthermore, the periods $f_0^{\tilde Q_4}$, $f_0^{C^\vee}$, $f_0^{\tilde X_1}$ and $f_0^{\tilde X_2}$ are solutions to systems of PDEs. For example, $f_0^{\tilde Q_4}$ is a solution to the following system of PDEs:
\begin{align}
   &\left((\theta_{q_1})^4-q_1(3\theta_{q_1}+\theta_{q_0}+1)(3\theta_{q_1}+\theta_{q_0}+2)(3\theta_{q_1}+\theta_{q_0}+3)(\theta_{q_1}+\theta_{q_0}+1)\right)F(q_1,q_0)=0\\
  \notag &\left((\theta_{q_0})^2-q_0(3\theta_{q_1}+\theta_{q_0}+1)(\theta_{q_1}+\theta_{q_0}+1)\right)F(q_1,q_0)=0.
\end{align}
PDEs for $f_0^{C^\vee}$, $f_0^{\tilde X_1}$ and $f_0^{\tilde X_2}$ can be written down in a similar way. We also refer to \cite{DKY} for similar examples. We can write down the bases of solutions to these four systems of PDEs (Note that we absorb the factor $1/x_0$ into $q_1$ as mentioned in Remark \ref{rmk-absorb-x-0}):

\[
    I^{\tilde Q_4}(q_1,q_0)=q_1^H q_0^P\sum_{d_1,d_0\geq 0}^\infty q_1^{d_1}q_0^{d_0}\frac{\prod_{k=1}^{3d_1+d_0}(3H+P+k)\prod_{k=1}^{d_1+d_0}(H+P+k)}{\prod_{k=1}^{d_1}(H+k)^4\prod_{k=1}^{d_0}(P+k)^2},
\]

\[
 I^{C^\vee}(q_1)=q_1^H\sum_{d_1\geq 0}^\infty q_1^{d_1}\frac{\prod_{k=1}^{3d_1}(3H+k)}{\prod_{k=1}^{d_1}(H+k)^3},
\]

\[
    I^{\tilde X_1}(q_{1,1},x)=q_{1,1}^H x^P\sum_{d_1,d_0\geq 0}^\infty q_{1,1}^{d_1}x^{d_0}\frac{\prod_{k=1}^{3d_1+d_0}(3H+P+k)}{\prod_{k=1}^{d_1}(H+k)^3\prod_{k=1}^{d_0}(P+k)},
\]

\[
    I^{\tilde X_2}(q_{1,2},y)=q_{1,2}^H y^P\sum_{d_1,d_0\geq 0}^\infty q_{1,2}^{d_1}y^{d_0}\frac{\prod_{k=1}^{3d_1}(3H+k)\prod_{k=1}^{d_1+d_0}(H+P+k)}{\prod_{k=1}^{d_1}(H+k)^4\prod_{k=1}^{d_0}(P+k)},
\]
where $H$ is the hyperplane class from $ H^2(\mathbb P^4)$ in the ambient space $\mathbb P^4\times \mathbb P^1$; $P$ is the hyperplane class from $ H^2(\mathbb P^1)$ in the ambient space $\mathbb P^4\times \mathbb P^1$; $q_1^H=e^{H\log q_1}$ and similarly for other prefactors of this form. These are the (relative) $I$-functions of these varieties which are mirror to Gromov--Witten invariants. We refer to Section \ref{sec:GW} for more details of the relations.

They again satisfy the Hadamard product formula. The following theorem follows from a straightforward computation similar to the proof of Theorem \ref{thm-period-coni-tran}.
\begin{theorem}
We have the following Hadamard product relation
\begin{align}\label{Hadamard-I-function-conifold}
    I^{\tilde Q_4}(q_1,q_0)\star_{q_1} I^{C^\vee}(q_1)=\frac{1}{2\pi i}\oint I^{\tilde X_1}(q_1,q_0/y)\star_{q_1} I^{\tilde X_2}(q_1,y)\frac{dy}{y}.
\end{align}
In (\ref{Hadamard-I-function-conifold}), we treat $\log q_1$ as a variable that is independent from $q_1$. Alternatively, one may consider $\bar{I}(q_1,q_0):=I(q_1,q_0)/q_1^Hq_0^P$ and write (\ref{Hadamard-I-function-conifold}) in terms of $\bar{I}$.
\end{theorem}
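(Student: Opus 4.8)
The plan is to reduce this theorem to essentially the same combinatorial identity that was already verified in the proof of Theorem \ref{thm-period-coni-tran}, which is the period version of this statement. The key observation is that each $I$-function listed above has the shape (prefactor)$\times$(power series), where the power series coefficients are exactly the period coefficients $f_0^{\bullet}$ with each factorial $(N)!$ replaced by a Pochhammer-type product $\prod_{k=1}^{N}(\cdots + k)$ in the formal variables $H$ and $P$. Concretely, $(3d_1+d_0)! \rightsquigarrow \prod_{k=1}^{3d_1+d_0}(3H+P+k)$, $(d_1+d_0)! \rightsquigarrow \prod_{k=1}^{d_1+d_0}(H+P+k)$, and so on. So the first step is to strip off the prefactors $q_1^H q_0^P$ (equivalently, to pass to the $\bar I$ of the final sentence) and work purely with the series coefficients as polynomials in $H,P$ truncated modulo $H^2,P^2$.

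Next I would carry out the contour integral on the right-hand side term by term, exactly as in the proof of Theorem \ref{thm-period-coni-tran}. The residue $\frac{1}{2\pi i}\oint (\cdots)\frac{dy}{y}$ extracts the $y^0$ coefficient, and since $I^{\tilde X_1}$ carries $(q_0/y)^{d_0}$ through the substitution $y\mapsto q_0/y$ while $I^{\tilde X_2}$ carries $y^{d_0'}$, the residue forces $d_0 = d_0'$ and collapses the double index to a single summation index $d_0$, producing $q_0^{d_0}$. The Hadamard product $\star_{q_1}$ multiplies the coefficients of equal powers $q_1^{d_1}$. The upshot is that the right-hand coefficient of $q_1^{d_1}q_0^{d_0}$ becomes the product of the corresponding coefficients of $I^{\tilde X_1}$ and $I^{\tilde X_2}$, namely
\[
\frac{\prod_{k=1}^{3d_1+d_0}(3H+P+k)}{\prod_{k=1}^{d_1}(H+k)^3\prod_{k=1}^{d_0}(P+k)}\cdot\frac{\prod_{k=1}^{3d_1}(3H+k)\prod_{k=1}^{d_1+d_0}(H+P+k)}{\prod_{k=1}^{d_1}(H+k)^4\prod_{k=1}^{d_0}(P+k)}.
\]
I then need to match this against the left-hand side, whose coefficient of $q_1^{d_1}q_0^{d_0}$ is the product of the $I^{\tilde Q_4}$ coefficient with the $I^{C^\vee}$ coefficient (the latter supplying the correction factor):
\[
\frac{\prod_{k=1}^{3d_1+d_0}(3H+P+k)\prod_{k=1}^{d_1+d_0}(H+P+k)}{\prod_{k=1}^{d_1}(H+k)^4\prod_{k=1}^{d_0}(P+k)^2}\cdot\frac{\prod_{k=1}^{3d_1}(3H+k)}{\prod_{k=1}^{d_1}(H+k)^3}.
\]
A direct comparison shows both products have identical numerators $\prod_{k=1}^{3d_1+d_0}(3H+P+k)$, $\prod_{k=1}^{d_1+d_0}(H+P+k)$, $\prod_{k=1}^{3d_1}(3H+k)$ and identical denominators $\prod_{k=1}^{d_1}(H+k)^7$, $\prod_{k=1}^{d_0}(P+k)^2$, so the two sides agree term by term before any truncation, and hence also modulo $H^2,P^2$.

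The only genuinely delicate point is bookkeeping around the formal prefactors and the truncation: since we work modulo $H^2,P^2$, I should confirm that the substitution $y\mapsto q_0/y$ and the Hadamard product interact correctly with the $q_1^H q_0^P$ factors, which is why the statement explicitly instructs us to treat $\log q_1$ as independent of $q_1$ (equivalently to work with $\bar I$). The cleanest route is to verify the identity at the level of $\bar I$, where the prefactors are absent and the coefficient-matching above is an exact equality of rational functions in $H,P$; the truncation $\mod H^2,P^2$ is then imposed only at the very end and is automatically compatible because it is a ring quotient. I expect the main obstacle to be purely notational, namely keeping the two separate summation indices of the right-hand factors straight through the contour integral and confirming that the correction period $I^{C^\vee}$ on the left precisely accounts for the extra $\prod_{k=1}^{3d_1}(3H+k)/\prod_{k=1}^{d_1}(H+k)^3$ discrepancy; there is no analytic or geometric difficulty beyond the combinatorial identity itself.
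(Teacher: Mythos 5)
Your proposal is correct and is exactly the argument the paper intends: the paper states that this theorem ``follows from a straightforward computation similar to the proof of Theorem \ref{thm-period-coni-tran}'', and you have carried out that computation, correctly observing that the residue forces the two $y$-exponents to coincide, that both sides have coefficient $\prod_{k=1}^{3d_1+d_0}(3H+P+k)\prod_{k=1}^{d_1+d_0}(H+P+k)\prod_{k=1}^{3d_1}(3H+k)\big/\bigl(\prod_{k=1}^{d_1}(H+k)^7\prod_{k=1}^{d_0}(P+k)^2\bigr)$, and that the identity already holds at the level of $\bar I$ before truncating modulo $H^2,P^2$. Your handling of the prefactors is also consistent with the paper's convention (note that $(q_0/y)^P\cdot y^P=q_0^P$, so even the prefactors match without issue).
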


\section{Example 2: quartic threefold}\label{sec:example-2}

In this section, we consider the semi-stable degeneration of a smooth quartic threefold $Q_4$ into a cubic threefold $C_3$ and the blow-up $\on{Bl}_C\mathbb P^3$ of $\mathbb P^3$ along complete intersection center of degrees $3$ and $4$ hypersurfaces, that is,
\[
Q_4\leadsto C_3\cup_{C_2}\on{Bl}_C\mathbb P^3,
\]
where $C$ is the complete intersection of degrees $3$ and $4$ hypersurfaces in $\mathbb P^3$, and $C_2$ is the common intersection which is a cubic surface. Note that the smooth anticanonical $K3$ surface in $Q_4$ also degenerates. It degenerates into a cubic surface $C_2^\prime$ which is in $C_3$ and a blow-up $\on{Bl}\mathbb P^2$ of $\mathbb P^2$ along a complete intersection of degrees $3$ and $4$ curves. The blown-up variety $\on{Bl}\mathbb P^2$ is a hypersurface in $\on{Bl}_C\mathbb P^3$. Therefore, we degenerate $(Q_4,K3)$ into $(C_3,C_2^\prime)$ and $(\on{Bl}_C \mathbb P^3, \on{Bl} \mathbb P^2)$ intersecting along $C_2$. 

We would like to write down the rank $2$ LG models for $(C_3,C_2+C_2)$ and $(\on{Bl}_C \mathbb P^3, \on{Bl} \mathbb P^2+C_2)$ and glue them to the mirror LG model of $(Q_4,K3)$. Since $\on{Bl}_C\mathbb P^3$ can be written as a hypersurface in a toric variety, we can write down its LG model following Givental \cite{Givental98}. For the rest of this section, we write 
\[
X:=Q_4, \quad D_0:=C_2, \quad X_1:=C_3, \quad \text{and } \tilde X_2:=\on{Bl}_C\mathbb P^3.
\]
We also write $D$ for the smooth anticanonical $K3$ surface in $X$. We write
\[
D_1:=C_2^\prime\subset X_1, \quad D_2:=\on{Bl}\mathbb P^2\subset X_2, \quad \text{and } D_{12}:=D_1\cap D_2.
\]
\subsection{Functional invariants}
The mirror LG model of $X$ is defined by 
\begin{align*}
W_1:X^\vee\rightarrow \mathbb C\\
(x_0,x_1,x_2,y)\mapsto x_0,
\end{align*}
where $X^\vee$ is the fibrewise compactification of
\[
\left\{(x_0,x_1,x_2,y)\in (\mathbb C^*)^4\left| x_1+x_2+y+\frac{q_{1,1}}{x_0x_1x_2y}=1\right.\right\}.
\]
The mirror LG model of the cubic surface $D_0$ is defined by
\begin{align*}
W_1:D_0^\vee\rightarrow \mathbb C\\
(x_0,x_1,x_2)\mapsto x_0,
\end{align*}
where $D_0^\vee$ is the fibrewise compactification of
\[
\left\{(x_0,x_1,x_2)\in (\mathbb C^*)^3\left| x_1+x_2+\frac{q_{1,0}}{x_0x_1x_2}=1\right.\right\}.
\]

The mirror LG model of $X_1$ is
\begin{align*}
W_1:X_1^\vee\rightarrow \mathbb C\\
(x_0,x_1,x_2,y_1)\mapsto x_0+y_1,
\end{align*}
where $X_1^\vee$ is the fibrewise compactification of
\[
\left\{(x_0,x_1,x_2,y_1)\in (\mathbb C^*)^4\left| x_1+x_2+\frac{q_{1,1}}{x_0x_1x_2y_1}=1\right.\right\}.
\]

Following \cite{CCGK}*{Section E}, the quasi-Fano variety $\tilde X_2=\on{Bl}_C(\mathbb P^3)$ can be constructed as a hypersurface of degree (3,1) in the toric variety $\mathbb P(\mathcal O_{\mathbb P^3}(-1)\oplus \mathcal O_{\mathbb P^3})$.
The LG model of $\tilde X_2$ is defined as follows
\begin{align*}
W_2:\tilde X_2^\vee\rightarrow \mathbb C\\
(x_0,x_1,x_2,y_2)\rightarrow x_0+y_2,
\end{align*}
where $\tilde X_2^\vee$ is the fibrewise compactification of
\[
\left\{(x_0,x_1,x_2,y_2)\in (\mathbb C^*)^4\left|x_1+x_2+\frac{q_{0,2}}{y_2}+\frac{q_{1,2}y_2}{x_0x_1x_2}=1\right.\right\}.
\]
The rank $2$ LG models for $(X_1,D_1+D_0)$ and $(\tilde X_2,D_2+D_0)$ are
\begin{align*}
h_1=(h_{11},h_{12}):X_1^\vee\rightarrow \mathbb C^2\\
(x_0,x_1,x_2,y_1)\mapsto (x_0,y_1),
\end{align*}
and
\begin{align*}
h_2=(h_{21},h_{22}):\tilde X_2^\vee\rightarrow \mathbb C^2\\
(x_0,x_1,x_2,y_2)\rightarrow (x_0,y_2)
\end{align*}
respectively. Recall that the mirror cubic curve family is given by the compactification of
\[
\left\{(\tilde{x}_1,\tilde{x}_2)\in(\mathbb C^*)^2\left|\tilde{x}_1+\tilde{x}_2+\frac{\tilde\lambda}{\tilde{x}_1\tilde{x}_2}=1\right.\right\}.
\]
Similar to the computation in Section \ref{sec:gen-fun-inv-conifold}, we obtain the functional invariants for $X^\vee$, $D_0^\vee$, $X_1^\vee$ and $\tilde X_2^\vee$ respectively : 
\begin{align}
    \lambda=\frac{q_1}{x_0y(1-y)^3}, \quad \lambda_0=\frac{q_{1,0}}{x_0}, \quad \lambda_1=\frac{q_{1,1}}{x_0y_1}, \quad \lambda_2=\frac{q_{1,2}y_2}{x_0(1-q_{0,2}/y_2)^3}.
\end{align}
We set
\begin{align}\label{identification-Q_4-1}
q_1=q_{1,0}=q_{1,1}=q_{1,2}y_2, \quad y=y_1=q_{0,2}/y_2.
\end{align}
The matching of singular fibres works similarly as in Section \ref{sec:example-1}. 
We have
\begin{proposition}
Under the identification (\ref{identification-Q_4-1}), the following product relation holds among the functional invariants:
\begin{align}\label{equ:gen_fun_inv2}
\lambda\cdot\lambda_0=\lambda_1\cdot\lambda_2.
\end{align}
\end{proposition}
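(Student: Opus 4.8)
The plan is to verify the identity by direct substitution of the identification (\ref{identification-Q_4-1}) into both sides and checking that they reduce to the same rational expression. First I would expand the left-hand side. Since the identification gives $q_{1,0} = q_1$, the product becomes
\[
\lambda \cdot \lambda_0 = \frac{q_1}{x_0\, y\, (1-y)^3} \cdot \frac{q_1}{x_0} = \frac{q_1^2}{x_0^2\, y\, (1-y)^3}.
\]

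Next I would expand the right-hand side, which is where the identification does nontrivial work. Using $q_{1,1} = q_1$ and $y_1 = y$ turns $\lambda_1$ into $q_1/(x_0 y)$. For $\lambda_2$, the identification $q_{0,2}/y_2 = y$ converts the denominator factor $(1 - q_{0,2}/y_2)^3$ into $(1-y)^3$, while the numerator factor $q_{1,2}\, y_2$ is, by $q_{1,2} y_2 = q_1$, exactly $q_1$. Hence
\[
\lambda_1 \cdot \lambda_2 = \frac{q_1}{x_0\, y} \cdot \frac{q_1}{x_0\, (1-y)^3} = \frac{q_1^2}{x_0^2\, y\, (1-y)^3},
\]
which agrees with the left-hand side, establishing the proposition.

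The computation itself is routine; the only step requiring attention---and the genuine content of the statement---is the matching inside $\lambda_2$: the extra factor $y_2$ in the numerator (reflecting the fact that $\tilde X_2 = \on{Bl}_C \mathbb P^3$ is realized as a hypersurface in a projective bundle rather than in $\mathbb P^3$) must combine with $q_{1,2}$ to give precisely $q_1$, and simultaneously the cubed denominator $(1 - q_{0,2}/y_2)^3$ must align with the $(1-y)^3$ appearing in $\lambda$ rather than with the linear factor in $\lambda_1$. Once the two substitutions in (\ref{identification-Q_4-1}) are correctly assigned to these factors, no further obstacle arises. I would also remark, parallel to the discussion following (\ref{equ:gen-fun-inv}) in Section \ref{sec:example-1}, that the singular-fibre loci of $\lambda$ are recovered as the union of those of $\lambda_1$ and $\lambda_2$, with the excess branching along $x_0 = 0$ and $x_0 = \infty$ cancelled by the $\lambda_0$ factor; this gives the equivalent conceptual reformulation $\lambda = \lambda_1 \lambda_2 / \lambda_0$ of the same identity.
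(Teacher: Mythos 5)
Your verification is correct and is exactly the routine direct substitution the paper intends (the paper states this proposition without an explicit proof, relying on the same computation as in its Section 3 analogue). Both sides indeed reduce to $q_1^2/\bigl(x_0^2\, y\,(1-y)^3\bigr)$ under the identification $q_1=q_{1,0}=q_{1,1}=q_{1,2}y_2$, $y=y_1=q_{0,2}/y_2$, and your closing remark on the singular loci matches the paper's own discussion following its first example.
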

\subsection{Periods}

Similar to Section \ref{sec:example-1}, relative periods can be computed as pullbacks of the period for the mirror cubic curve family via the functional invariants. The relative period for $(X^\vee, W)$ is
\[
f_0^{X^\vee}(q_1,x_0)=\sum_{d_1\geq 0}(q_1/x_0)^{d_1}\frac{(4d_1)!}{(d_1!)^4}.
\]
The relative period for $(D_0^\vee, W_0)$ is
\[
f_0^{D_0^\vee}(q_{1,0},x_0)=\sum_{d_1\geq 0}(q_{1,0}/x_0)^{d_1}\frac{(3d_1)!}{(d_1!)^3}.
\]
The relative period for $(X_1^\vee,h_1)$ is
\[
f_0^{X_1^\vee}(q_{1,1},y_1,x_0)=f_0^{E^\vee}(\lambda_1)=\sum_{d_1\geq 0}\frac{(3d_1)!}{(d_1!)^3}\left(\frac{q_{1,1}}{x_0y_1}\right)^{d_1}.
\]
The relative period for $(\tilde X_2^\vee,h_2)$ is
\begin{align*}
f_0^{\tilde X_2^\vee}(q_{1,2}y_2,q_{0,2}/y_2,x_0)=&\frac{1}{1-q_{0,2}/y_2}f_0^{E^\vee}(\lambda_2)\\
=&\frac{1}{1-q_{0,2}/y_2}\sum_{d_1\geq 0}\frac{(3d_1)!}{(d_1!)^3}\left(\frac{q_{1,2}y_2}{x_0(1-q_{0,2}/y_2)^3}\right)^{d_1}\\
=& \sum_{d_1,d_0\geq 0}\frac{(3d_1)!}{(d_1!)^3}(q_{1,2}y_2/x_0)^{d_1}\frac{(3d_1+d_0)!}{(3d_1)!d_0!}(q_{0,2}/y_2)^{d_0}\\
=& \sum_{d_1,d_0\geq 0}\frac{(3d_1+d_0)!}{(d_1)!^3d_0!}(q_{1,2}y_2/x_0)^{d_1}(q_{0,2}/y_2)^{d_0}.
\end{align*}

Note that $q_1=q_{1,1}=q_{1,2}y_2$ and $y=y_1=q_{0,2}/y_2$ when we glue the rank $2$ LG models.
We may rewrite the relative period for $(\tilde X_2^\vee,h_2)$ as
\[
f_0^{\tilde X_2^\vee}(q_1,y,x_0)=\sum_{d_1,d_0\geq 0}\frac{(3d_1+d_0)!}{(d_1)!^3d_0!}(q_1/x_0)^{d_1}(y)^{d_0}.
\]

\begin{theorem}
We have the following Hadamard product relation:
\[
f_0^{X^\vee}(q_1,x_0)\star_{q_1} f_0^{D_0^\vee}(q_1,x_0)=\frac{1}{2\pi i}\oint f_0^{X_1^\vee}(q_1,y,x_0)\star_{q_1} f_0^{\tilde X_2^\vee}(q_1,y,x_0)\frac{dy}{y}.
\]
\end{theorem}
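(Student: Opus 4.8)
The plan is to follow the proof of Theorem~\ref{thm-period-coni-tran} verbatim and reduce the claimed identity to a direct manipulation of the explicit hypergeometric series, with the residue integral performing the essential collapse. First I would expand the left-hand side. Since the Hadamard product $\star_{q_1}$ multiplies coefficients of equal powers of $q_1$, and the coefficient of $q_1^{d_1}$ in $f_0^{X^\vee}$ is $\frac{(4d_1)!}{(d_1!)^4}x_0^{-d_1}$ while that in $f_0^{D_0^\vee}$ is $\frac{(3d_1)!}{(d_1!)^3}x_0^{-d_1}$, the left-hand side becomes
\[
f_0^{X^\vee}\star_{q_1}f_0^{D_0^\vee}=\sum_{d_1\geq 0}\frac{(4d_1)!\,(3d_1)!}{(d_1!)^7}\left(\frac{q_1}{x_0^2}\right)^{d_1}.
\]

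Next I would compute the Hadamard product on the right-hand side before integrating in $y$. Under the identification (\ref{identification-Q_4-1}) the first factor reads $f_0^{X_1^\vee}(q_1,y,x_0)=\sum_{d_1\geq 0}\frac{(3d_1)!}{(d_1!)^3}(q_1/x_0)^{d_1}y^{-d_1}$, so its $q_1^{d_1}$-coefficient carries a $y^{-d_1}$, whereas $f_0^{\tilde X_2^\vee}(q_1,y,x_0)$ contributes the factor $y^{d_0}$ in its $q_1^{d_1}$-coefficient. Multiplying coefficient-by-coefficient in $q_1$ then gives
\[
f_0^{X_1^\vee}\star_{q_1}f_0^{\tilde X_2^\vee}=\sum_{d_1,d_0\geq 0}\frac{(3d_1)!\,(3d_1+d_0)!}{(d_1!)^6\,d_0!}\left(\frac{q_1}{x_0^2}\right)^{d_1}y^{\,d_0-d_1}.
\]

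The crucial step is the residue. The operator $\frac{1}{2\pi i}\oint(\cdots)\frac{dy}{y}$ extracts the coefficient of $y^0$, which forces $d_0=d_1$ and eliminates every off-diagonal term; the negative power $y^{-d_1}$ supplied by $f_0^{X_1^\vee}$ is precisely what the positive power $y^{d_0}$ of $f_0^{\tilde X_2^\vee}$ must cancel. Setting $d_0=d_1$ and using $(3d_1+d_0)!=(4d_1)!$ collapses the double sum to $\sum_{d_1\geq 0}\frac{(3d_1)!\,(4d_1)!}{(d_1!)^7}(q_1/x_0^2)^{d_1}$, which agrees termwise with the left-hand side computed above, completing the argument.

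I do not expect a genuine obstacle, since the statement is a purely formal manipulation of series already written down; the only point requiring care is the bookkeeping of the $y$-exponents and the $x_0$-powers. Concretely, one must verify that the cubic-surface correction factor $f_0^{D_0^\vee}$ on the left is reproduced on the right through the negative $y$-power of $f_0^{X_1^\vee}$, that the quartic factor $(4d_1)!/(d_1!)^4$ of $f_0^{X^\vee}$ emerges from the diagonal $d_0=d_1$ of $f_0^{\tilde X_2^\vee}$, and that the $x_0$-powers combine to $x_0^{-2d_1}$ on both sides, exactly as in the proof of Theorem~\ref{thm-period-coni-tran}.
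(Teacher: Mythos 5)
Your proposal is correct and follows exactly the computation the paper intends: the paper states this theorem without a written proof precisely because it is the same coefficient-by-coefficient expansion and residue extraction used in the proof of Theorem 3.4 (the $\tilde Q_4$ example), and your bookkeeping of the $y$-exponents (the $y^{-d_1}$ from $f_0^{X_1^\vee}$ cancelling against $y^{d_0}$ to force $d_0=d_1$) and of the $x_0^{-2d_1}$ powers matches the series given in Section 4. No gaps.
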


As mentioned in Remark \ref{rmk-absorb-x-0}, we can absorb the factor $1/x_0$ into the factor $q_1$. These periods are solutions to systems of PDEs. The bases of solutions for the PDEs are as follows:

\[
I^{Q_4}(q_1)=q_1^{H}\sum_{d\geq 0}\left(\frac{\prod_{k=1}^{4d} (4H+k)}{\prod_{k=1}^d(H+k)^{4}}\right)q_1^d;
\]
\[
 I^{C^\vee}(q_1)=q_1^H\sum_{d_1\geq 0}^\infty \frac{\prod_{k=1}^{3d_1}(3H+k)}{\prod_{k=1}^{d_1}(H+k)^3}q_1^{d_1};
\]
\begin{align*}
    I^{X_1}(q_1,y)=(q_1/y)^H\sum_{d_1\geq 0}^\infty \frac{\prod_{k=1}^{3d_1}(3H+k)}{\prod_{k=1}^{d_1}(H+k)^3}(q_1/y)^{d_1};
\end{align*}
\[
    I^{\tilde X_2}(q_1,y)=q_1^H y^P\sum_{d_1,d_0\geq 0}^\infty \frac{\prod_{k=1}^{3d_1+d_0}(3H+P+k)}{\prod_{k=1}^{d_1}(H+k)^3\prod_{k=1}^{d_0}(P+k)}q_1^{d_1}y^{d_0}.
\]
\begin{theorem}
The following Hadamard product relation holds:
\begin{align}\label{Hadamard-I-function-Q-4}
    I^{Q_4}(q_1)\star_{q_1} I^{C^\vee}(q_1)=\frac{1}{2\pi i}\oint I^{X_1}(q_1,y)\star_{q_1} I^{\tilde X_2}(q_1,y)\frac{dy}{y},
\end{align}
where we set $P=H$. In (\ref{Hadamard-I-function-Q-4}), we treat $\log q_1$ as a variable that is independent from $q_1$. Alternatively, we can consider $\bar{I}(q_1):=I(q_1)/q_1^H$ and write (\ref{Hadamard-I-function-Q-4}) in terms of $\bar{I}$.
\end{theorem}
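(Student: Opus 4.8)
The plan is to prove the identity by a direct coefficient computation, in complete parallel with the preceding relation among relative periods: the $I$-functions differ from the period series only in that each factorial $n!$ is promoted to a cohomology-valued product $\prod_{k=1}^{n}(\ast H+k)$ of linear factors, and the Hadamard product together with the residue manipulate these products in exactly the same way. First I would separate, in each of the four $I$-functions, the formal prefactor (the powers $q_1^{H}$, $(q_1/y)^{H}$ and $y^{P}$, built from $\log q_1$ and $\log y$) from the power-series part, so that $\star_{q_1}$ acts by multiplying the coefficients of equal powers of $q_1$ while the prefactors simply multiply.

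For the left-hand side, the coefficient of $q_1^{d}$ in $I^{Q_4}\star_{q_1}I^{C^\vee}$ is
\[
\frac{\prod_{k=1}^{4d}(4H+k)}{\prod_{k=1}^{d}(H+k)^{4}}\cdot\frac{\prod_{k=1}^{3d}(3H+k)}{\prod_{k=1}^{d}(H+k)^{3}},
\]
and the two prefactors combine to $q_1^{2H}$. For the right-hand side I would first form $I^{X_1}\star_{q_1}I^{\tilde X_2}$: the coefficient of $q_1^{d_1}$ carries a factor $y^{-d_1}$ from $I^{X_1}$ (coming from the $(q_1/y)^{d_1}$ term) and the full $d_0$-sum
\[
\sum_{d_0\ge 0}\frac{\prod_{k=1}^{3d_1+d_0}(3H+P+k)}{\prod_{k=1}^{d_1}(H+k)^{3}\prod_{k=1}^{d_0}(P+k)}\,y^{d_0}
\]
from $I^{\tilde X_2}$, while the $y$-prefactors combine to $y^{P-H}$. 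Setting $P=H$ makes this prefactor equal to $1$ and replaces $3H+P$ by $4H$ and $P+k$ by $H+k$; the residue $\tfrac{1}{2\pi i}\oint(\,\cdot\,)\tfrac{dy}{y}$ then extracts the coefficient of $y^{0}$, which by the combination $y^{-d_1}\cdot y^{d_0}$ forces $d_0=d_1$.

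Carrying out this extraction, the $d_0=d_1=:d$ term contributes $\prod_{k=1}^{4d}(4H+k)/\prod_{k=1}^{d}(H+k)^{4}$ from the $\tilde X_2$ factor and $\prod_{k=1}^{3d}(3H+k)/\prod_{k=1}^{d}(H+k)^{3}$ from the $X_1$ factor, whose product is exactly the left-hand coefficient above; the surviving prefactor is again $q_1^{2H}$. Thus the two sides agree term by term as formal expressions in $H$, and in particular mod $H^{2}$. The computation is essentially mechanical, so the only real care needed is the bookkeeping of the prefactors: tracking that the $y$-powers $y^{-H}$ and $y^{P}$ cancel precisely after the substitution $P=H$, so that the residue is genuinely taken in integer powers of $y$, and that the separate $q_1^{H}$, $(q_1/y)^{H}$ and $y^{P}$ prefactors assemble into a single $q_1^{2H}$ on each side. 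Equivalently, one may pass to $\bar I=I/q_1^{H}$ to suppress these prefactors entirely, after which the claim reduces to the purely combinatorial matching of the coefficients displayed above.
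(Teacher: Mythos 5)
Your proposal is correct and is essentially the computation the paper intends: the paper gives no separate argument for this theorem beyond declaring it a straightforward computation analogous to the proof of Theorem 3.3 (the Hadamard product relation for the periods), which is exactly the coefficient-extraction-via-residue argument you carry out, upgraded from factorials to the products $\prod_{k}(\ast H+k)$. Your bookkeeping of the prefactors — that $y^{-H}\cdot y^{P}$ cancels only after setting $P=H$, so the residue genuinely picks out $d_0=d_1$ and both sides carry $q_1^{2H}$ — is the one point requiring care, and you handle it correctly.
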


\section{Toric complete intersections}\label{sec:toric}

\subsection{Set-up}

Let $X$ be a complete intersection in a toric variety $Y$ defined by a generic section of $E=L_0\oplus L_1\oplus\cdots \oplus  L_s$, where each $L_l$ is a nef line bundle for $0\leq l \leq s$. Let $\rho_l=c_1(L_l)$, we assume that $-K_Y-\sum_{l=0}^s \rho_l$ is a sum of toric prime divisors (not necessarily nef). Without this assumption, the Hori-Vafa mirror may not be of the correct dimension. 

Let $\mathbf s_l\in H^0(Y,L_l)$, for $0\leq l \leq s$, be generic sections determining $X$. We consider a refinement of the partition with respect to $L_0$ such that $L_{0,1}, L_{0,2}$  are two nef line bundles and $L_0=L_{0,1}\otimes L_{0,2}$. Let $\rho_{0,1}=c_1(L_{0,1})$ and $\rho_{0,2}=c_1(L_{0,2})$. Then we have two varieties $X_1$ and $X_2$ defined by generic sections of $E_1=L_{0,1}\oplus L_1\oplus\cdots \oplus  L_s$ and $E_2=L_{0,2}\oplus L_1\oplus\cdots \oplus  L_s$ respectively. 

Let $\mathbf s_{0,1}\in H^0(Y,L_{0,1})$ and $\mathbf s_{0,2}\in H^0(Y,L_{0,2})$ be the generic sections determining $X_1$ and $X_2$. 
We can construct a pencil of complete intersections as follows. Let
\[
X^\prime=\cap_{l=1}^s\{\mathbf s_l=0\}.
\]
We assume that $X^\prime$ is connected  and quasi-smooth. Then we consider the pencil
\[
\mathfrak X:=\{t\mathbf s_0-\mathbf s_{0,1}\mathbf s_{0,2}=0\}\cap X^\prime
\]
in $\mathbb A^1\times Y$ where $t$ is a parameter on $\mathbb A^1$. $\mathfrak X$ has singularities along $t=\mathbf s_0=\mathbf s_{0,1}=\mathbf s_{0,2}=0$. We resolve the singularities of $\mathfrak X$ by blowing up $t=\mathbf s_{0,2}=0$ inside $\mathbb A^1\times Y$ and take the proper transform of $\mathfrak X$. This gives a degeneration of $X$ such that the central fibre of the degeneration is the union of $X_1$ and $\tilde X_2$, where $\tilde X_2$ is given by blowing up $X_2$ along $X^\prime\cap \{\mathbf s_0=\mathbf s_{0,1}=\mathbf s_{0,2}=0\}$.
 Two varieties $X_1$ and $\tilde X_2$ intersect along $D_0$ which is a complete intersection in the toric variety $Y$ defined by a generic section of $L_{0,1}\oplus L_{0,2}\oplus L_1\oplus\cdots \oplus  L_s$. 
 
 Let $X$ be quasi-Fano, then its anticanonical linear system contains a smooth Calabi-Yau member. Let $D$ be a smooth anticanonical divisor of $X$. Then the degeneration of $X$ leads to a degeneration of $D$ into $D_1\cup_{D_{12}}\tilde D_2$ where $D_1\subset X_1$, $\tilde D_2\subset \tilde X_2$ and $D_{12}=D_1\cap \tilde D_2\subset D_0$. We do not assume $-K_Y-\sum_{l=0}^s \rho_l$ is nef on $Y$. But we assume that the restriction of $-K_Y-\sum_{l=0}^s \rho_l$ is nef on a toric subvariety $Y^\prime \subset Y$ and $D$ is defined by a generic section of $E^\prime=L_0^\prime\oplus L_1^\prime\oplus\cdots \oplus  L_s^\prime\oplus L_{s+1}^\prime$, where $L_i^\prime$, $0\leq i\leq s+1$, are nef line bundles of $Y^\prime$ and $L_i^\prime$ are restrictions of $L_i$ on $Y^\prime$ for $0\leq i \leq s$. 
 For example, let $X$ be the blow-up of $\mathbb P^3$ along a complete intersection of degree $4$ and $5$ hypersurfaces. It can be realized as a degree (4,1) hypersurface in the toric variety $Y:=\mathbb P(\mathcal O_{\mathbb P^3}(-1)\oplus \mathcal O_{\mathbb P^3})$, see \cite{DKY}*{Section 4}. Note that $-K_X$ is not nef. However, the smooth anticanonical divisor of $X$ is a quartic $K3$ surface in $\mathbb P^3$. Therefore, this example is within our assumption. The Tyurin degeneration of $D$ is obtained by a refinement of the nef partition which is induced by the refinement of the nef partition of $E$. In other words, we consider a refinement of the nef partition with respect to $L_0^\prime$ such that $L_0^\prime=L_{0,1}^\prime\otimes L_{0,2}^\prime$, where $L_{0,1}^\prime, L_{0,2}^\prime$  are two nef line bundles given by the restriction of $L_{0,1}$ and  $L_{0,2}$ to $Y^\prime$ respectively.

We can realize $\tilde{X}_2$ as a complete intersection in a toric variety following \cite{CCGK}*{Section E}. Indeed, it is a hypersurface in the total space of $\pi:\mathbb P_{X_2}(\mathcal O\oplus i^*L_{0,2}^{-1})\rightarrow X_2$ defined by a generic section of the line bundle $\pi^*i^*L_{0,1}\otimes \mathcal O(1)$, where $i:X_2\hookrightarrow Y$ is the inclusion map. In other words, it is a complete intersection in the toric variety $\mathbb P_Y(\mathcal O\oplus L_{0,2}^{-1})$ given by a generic section of $\pi^*L_{0,2}\oplus \pi^*L_1\oplus\cdots \oplus  \pi^*L_s \oplus (\pi^*L_{0,1}\otimes \mathcal O(1))$ where we use the same $\pi$ for the projection of $\mathbb P_Y(\mathcal O\oplus L_{0,2}^{-1})$ to the base $Y$. 

Let $p_1,\ldots,,p_r\in H^2(Y,\mathbb Z)$ be a nef integral basis. We write the toric divisors as
\[
\mathbf D_j=\sum_{i=1}^r m_{ij}p_i,\quad 1\leq j\leq m, 
\]
for some $m_{ij}$.

The partition of the toric divisors gives a partition of the variables $x_1,\ldots, x_m$ into $s+2$ groups. Let $F_l(x)$ be the sum of $x_i$ in each group for $l=0,\ldots,s,s+1$. Let $F_{0,1}(x)$ and $F_{0,2}(x)$ correspond to the refinement of the nef partition with respect to $L_0$. We write $F_{0,1}(x)=x_{j_1}+\ldots,x_{j_a}$ and $F_{0,2}(x)=x_{j_{a+1}}+\ldots,x_{j_{a+b}}$. Note that we have $F_{0,1}(x)+F_{0,2}(x)=F_0(x)$.

Following \cite{Givental98}, the Hori-Vafa mirror of $X$ is the LG model
\begin{align*}
W:X^\vee &\rightarrow \mathbb C\\
(x_1,\ldots,x_m)&\mapsto F_{s+1}(x),
\end{align*}
where $X^\vee$ is the fibrewise Calabi-Yau compactification of 
\[
\left\{(x_1,\ldots,x_m)\in (\mathbb C^*)^m\left| \prod_{j=1}^m x_j^{m_{ij}}=q_i, i=1,\ldots,r; F_l(x)=1,l=0,\ldots,s\right.\right\}.
\]

The mirror for $D_0$ is the LG model 
\begin{align*}
W_0:D^\vee_0 &\rightarrow \mathbb C\\
(x_1,\ldots,x_m)&\mapsto F_{s+1}(x),
\end{align*}
where $D_{0}^\vee$ is the fibrewise Calabi-Yau compactification of 
\begin{align*}
\left\{(x_1,\ldots,x_m)\in (\mathbb C^*)^m\left| \prod_{j=1}^m x_j^{m_{ij}}=q_i, i=1,\ldots,r;\right.\right. \\
\left.\vphantom{\prod_{j=1}^m} F_l(x)=1,l=1,\ldots,s; F_{0,1}(x)=1, F_{0,2}(x)=1\right\}.
\end{align*}

The mirror for $(X_1,D_0+D_1)$ is the rank $2$ LG model
\begin{align*}
h_1:X_1^\vee &\rightarrow \mathbb C^2\\
(x_1,\ldots,x_m)&\mapsto (F_{s+1}(x),F_{0,2}(x)),
\end{align*}
where $X_1^\vee$ is the fibrewise compactification of
\begin{align*}
&\left\{(x_1,\ldots,x_m)\in (\mathbb C^*)^{m}\left| \prod_{j=1}^m x_j^{m_{ij}}=q_{i,1}, i=1,\ldots,r;\right.\right.\\ &\left.\vphantom{\prod_{j=1}^m} F_l(x)=1,l=1,\ldots,s;F_{0,1}(x)=1\right\}.
\end{align*}

$\tilde{X}_2$ is a complete intersection in the toric variety $\mathbb P_Y(\mathcal O\oplus L_{0,2}^{-1})$ given by a generic section of $L_{0,2}\oplus L_1\oplus\cdots \oplus  L_s \oplus (\pi^*L_{0,1}\otimes \mathcal O(1))$. The rank $2$ LG model for $(\tilde{X}_2,D_0+\tilde{D}_2)$ is
\begin{align*}
h_2:\tilde{X}_2^\vee &\rightarrow \mathbb C^2\\
(x_1,\ldots,x_m,y_2)&\mapsto (F_{s+1}(x),y_2),
\end{align*}
where $\tilde{X}_2^\vee$ is the fibrewise compactification of
\begin{align*}
&\left\{(x_1,\ldots,x_m,y_2)\in (\mathbb C^*)^{m+1}\left| \left(\prod_{j=1}^m x_j^{m_{ij}}\right)y_2^{-\sum_{k=a+1}^{a+b}m_{ij_k}}=q_{i,2}, i=1,\ldots,r;\right.\right.\\ &\left.\vphantom{\prod_{j=1}^m} F_l(x)=1,l=1,\ldots,s;F_{0,1}(x)+\frac{q_{0,2}}{y_2}=1,F_{0,2}(x)=1 \right\}.
\end{align*}

\subsection{Generalized functional invariants}
Then, we can compute the generalized functional invariants. Recall that $X^\vee$ is the compactification of
\[
\left\{(x_1,\ldots,x_m)\in (\mathbb C^*)^m\left| \prod_{j=1}^m x_j^{m_{ij}}=q_i, i=1,\ldots,r; F_l(x)=1,l=0,\ldots,s\right.\right\}.
\]
Set $F_{0,2}(x)=y$, then $F_{0,1}(x)=1-y$. For $y\neq 1$, the following change of variables can give us $D_0^\vee$.
\[
\bar{x}_{j_k}=\frac{x_{j_k}}{1-y}, k=1,\ldots,a;\quad \bar{x}_{j_k}=\frac{x_{j_k}}{y}, k=a+1,\ldots,a+b; 
\]
and 
\begin{align*}
\lambda_i=\frac{q_i}{\prod_{k=1}^a(1-y)^{m_{ij_k}}\prod_{k=a+1}^{a+b}(y)^{m_{ij_k}}}, i=1\ldots, r.
\end{align*}
Therefore, the generalized functional invariant for $X^\vee$ is $\vec \lambda=(\lambda_1,\ldots,\lambda_r)$, where
\begin{align}\label{gen-fun-inv-X}
\lambda_i=\frac{q_i}{\prod_{k=1}^a(1-y)^{m_{ij_k}}\prod_{k=a+1}^{a+b}(y)^{m_{ij_k}}}, i=1\ldots, r.
\end{align}
For $X_1^\vee$, we also set $F_{0,2}(x)=y_1$. By similar computation, the generalized functional invariant is $\vec \lambda_1=(\lambda_{1,1},\ldots,\lambda_{r,1})$, where
\begin{align}\label{gen-fun-inv-X-1}
\lambda_{i,1}=\frac{q_{i,1}}{\prod_{k=a+1}^{a+b}y_1^{m_{ij_k}}}, i=1\ldots, r.
\end{align}
Similarly, the generalized functional invariant for $\tilde{X}_2$ is $\vec \lambda_2=(\lambda_{1,2},\ldots,\lambda_{r,2})$, where
\begin{align}\label{gen-fun-inv-X-2}
\lambda_{i,2}=\frac{q_{i,2}}{y_2^{-\sum_{k=a+1}^{a+b}m_{ij_k}}\prod_{k=1}^{a}(1-q_{0,2}/y_2)^{m_{ij_k}}}, i=1\ldots, r.
\end{align}

We set
\begin{align}\label{identification-X}
q_i=q_{i,1}=q_{i,2}y_2^{\sum_{k=a+1}^{a+b}m_{ij_k}}, i=1,\ldots,r, \quad y=y_1=q_{0,2}/y_2.
\end{align}
\begin{proposition}\label{prop-toric-gen-func-inv}
Under the identification (\ref{identification-X}), the generalized functional invariants satisfy the product relation
\[
\lambda_i q_i=\lambda_{i,1}\lambda_{i,2}, \quad i=1,\ldots, r.
\]
\end{proposition}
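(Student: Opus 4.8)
The plan is to verify the identity by direct substitution of the gluing relations (\ref{identification-X}) into the explicit expressions (\ref{gen-fun-inv-X}), (\ref{gen-fun-inv-X-1}), (\ref{gen-fun-inv-X-2}) for the three generalized functional invariants, and then to compare the two sides monomial by monomial. Since all three invariants are ratios of $q$-variables by products of powers of $y$-type factors indexed by the same subset $\{j_{a+1},\ldots,j_{a+b}\}$ of toric divisors, the entire statement should reduce to an elementary bookkeeping of exponents.

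First I would rewrite $\lambda_{i,1}$ under (\ref{identification-X}). Since $q_{i,1}=q_i$ and $y_1=y$, formula (\ref{gen-fun-inv-X-1}) becomes
\[
\lambda_{i,1}=\frac{q_i}{\prod_{k=a+1}^{a+b}y^{m_{ij_k}}},
\]
which already isolates the factor $\prod_{k=a+1}^{a+b}y^{m_{ij_k}}$ appearing in the denominator of $\lambda_i$ in (\ref{gen-fun-inv-X}). Next I would treat $\lambda_{i,2}$, which carries the only genuinely delicate step. Under (\ref{identification-X}) one has $q_{0,2}/y_2=y$, hence $1-q_{0,2}/y_2=1-y$, and $q_{i,2}=q_i\,y_2^{-\sum_{k=a+1}^{a+b}m_{ij_k}}$. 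Substituting these into (\ref{gen-fun-inv-X-2}), the two powers of $y_2$ -- the one coming from $q_{i,2}$ and the matching $y_2^{-\sum_{k=a+1}^{a+b}m_{ij_k}}$ sitting in the denominator of (\ref{gen-fun-inv-X-2}) -- cancel exactly, leaving
\[
\lambda_{i,2}=\frac{q_i}{\prod_{k=1}^{a}(1-y)^{m_{ij_k}}}.
\]

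Confirming that this cancellation is exact is the crux of the argument: I expect the main (and essentially only) obstacle to be verifying that the exponent $\sum_{k=a+1}^{a+b}m_{ij_k}$ used to glue $q_i$ with $q_{i,2}$ in (\ref{identification-X}) is literally the same sum sitting in the denominator of (\ref{gen-fun-inv-X-2}), so that no residual power of $y_2$ survives. This is exactly the bookkeeping that encodes how the blow-up parameter $y_2$ introduced in the construction of $\tilde X_2^\vee$ is absorbed into the base parameters upon gluing.

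Finally, multiplying the two simplified expressions gives
\[
\lambda_{i,1}\lambda_{i,2}=\frac{q_i^2}{\prod_{k=1}^{a}(1-y)^{m_{ij_k}}\prod_{k=a+1}^{a+b}y^{m_{ij_k}}},
\]
while from (\ref{gen-fun-inv-X}) one reads off
\[
\lambda_i q_i=\frac{q_i^2}{\prod_{k=1}^{a}(1-y)^{m_{ij_k}}\prod_{k=a+1}^{a+b}y^{m_{ij_k}}}.
\]
The two right-hand sides agree term by term, proving $\lambda_i q_i=\lambda_{i,1}\lambda_{i,2}$ for each $i=1,\ldots,r$. The whole argument is thus a matching of exponents, the only conceptual content being the disappearance of $y_2$ once the gluing (\ref{identification-X}) identifies it with $q_{0,2}/y$ and reabsorbs its remaining powers into the redefinition of the $q_i$.
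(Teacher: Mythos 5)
Your computation is correct and is essentially the argument the paper intends: the proposition is stated without a separate proof precisely because it follows by substituting the identification (\ref{identification-X}) into the displayed formulas (\ref{gen-fun-inv-X})--(\ref{gen-fun-inv-X-2}) and cancelling the $y_2^{-\sum_{k=a+1}^{a+b}m_{ij_k}}$ factors, exactly as you do. Your exponent bookkeeping checks out, including the key cancellation of the $y_2$-powers between $q_{i,2}$ and the denominator of $\lambda_{i,2}$.
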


\begin{remark}
Note that the generalized functional invariants in here are with respect to the mirror family of $D_0$. This is slightly different from the generalized functional invariants considered in Section \ref{sec:example-1} and Section \ref{sec:example-2} where the functional invariants are with respect to the mirror family of the smooth anticanonical divisor $D_{12}$ of $D_0$. In Proposition \ref{prop-toric-gen-func-inv} $q_i$ is considered as the generalized functional invariants for $D_0$ with respect to itself.
\end{remark}

We can also compute the generalized functional invariants when the base family is the mirror to the smooth anticanonical divisor $D_{12}$ of $D_0$. Recall that we have a Tyurin degeneration of $D$ into $D_1\cup_{D_{12}}\tilde D_2$, where $D_1\subset X_1$, $\tilde D_2\subset \tilde X_2$, induced by a refinement of nef partition. If we assume that $-K_Y-\sum_{l=0}^s c_1(L_l)$ is nef, then $D_{12}^\vee$ is defined by the fibrewise Calabi-Yau compactification of 
\begin{align}\label{mirror-D-12}
\left\{(x_1,\ldots,x_m)\in (\mathbb C^*)^m\left| \prod_{j=1}^m x_j^{m_{ij}}=q_i, i=1,\ldots,r;\right.\right. \\
\notag \left.\vphantom{\prod_{j=1}^m} F_l(x)=1,l=1,\ldots,s+1; F_{0,1}(x)=1, F_{0,2}(x)=1\right\}.
\end{align}
If we only assume that the restriction of $-K_Y-\sum_{l=0}^s c_1(L_l)$ is nef on a toric subvariety $Y^\prime \subset Y$, then $D_{12}^\vee$ is a specialization of (\ref{mirror-D-12}) to $q_i=0$ for some $i\in\{1,\ldots,r\}$. We will simply use (\ref{mirror-D-12}) to do the computation and we can specialize to $q_i=0$, for some $i\in\{1,\ldots,r\}$, when it is necessary. 

Then, we can compute generalized functional invariants with respect to $D_{12}^\vee$. Recall that the mirror for $D_0$ is the LG model 
\begin{align*}
W_0:D^\vee_0 &\rightarrow \mathbb C^*\\
(x_1,\ldots,x_m)&\mapsto F_{s+1}(x),
\end{align*}
where $D_{0}^\vee$ is the fibrewise Calabi-Yau compactification of 
\begin{align*}
\left\{(x_1,\ldots,x_m)\in (\mathbb C^*)^m\left| \prod_{j=1}^m x_j^{m_{ij}}=q_i, i=1,\ldots,r;\right.\right. \\
\left.\vphantom{\prod_{j=1}^m} F_l(x)=1,l=1,\ldots,s; F_{0,1}(x)=1, F_{0,2}(x)=1\right\}.
\end{align*}
Let $I_{s+1}$ be the index set for the summand of $F_{s+1}(x)$. We set $F_{s+1}(x)=y_0$ and consider the change of variables
\[
\bar x_j=\frac{x_j}{y_0}, j\in I_{s+1}
\]
and
\[
\lambda_{i,0}=\frac{q_{i}}{\prod_{j\in I_{s+1}}^{a+b}y_0^{m_{ij}}}, i=1\ldots, r.
\]
Generalized functional invariants of $X^\vee$, $X_1^\vee$ and $\tilde X_2^\vee$ with respect to $D_{12}^\vee$ can also be computed. The computation is similar to the computation for (\ref{gen-fun-inv-X}), (\ref{gen-fun-inv-X-1}) and (\ref{gen-fun-inv-X-2}) except that there is an extra change of variables  
\[
\bar x_j=\frac{x_j}{y_0}, j\in I_{s+1}.
\]
Therefore, the generalized functional invariants of $X^\vee$, $X_1^\vee$ and $\tilde X_2^\vee$ with respect to $D_{12}^\vee$ are
\[
\lambda_i^\prime=\lambda_i\frac{1}{\prod_{j\in I_{s+1}}^{a+b}y_0^{m_{ij}}}, i=1\ldots, r;
\]
\[
\lambda_{i,1}^\prime=\lambda_{i,1}\frac{1}{\prod_{j\in I_{s+1}}^{a+b}y_0^{m_{ij}}}, i=1\ldots, r;
\]
\[
\lambda_{i,2}^\prime=\lambda_{i,2}\frac{1}{\prod_{j\in I_{s+1}}^{a+b}y_0^{m_{ij}}}, i=1\ldots, r.
\]
Then, we again have the relation among generalized functional invariants
\begin{proposition}\label{prop-toric-gen-func-inv-'}
Under the identification (\ref{identification-X}), the generalized functional invariants satisfy the product relation
\[
\lambda_i^\prime\lambda_{i,0}=\lambda_{i,1}^\prime\lambda_{i,2}^\prime, i=1\ldots, r.
\]
\end{proposition}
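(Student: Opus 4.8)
The plan is to deduce this relation directly from Proposition \ref{prop-toric-gen-func-inv}, which was already established under the very same identification (\ref{identification-X}). The key observation is that the passage from the $D_0$-relative invariants to the $D_{12}$-relative invariants is effected by a single additional change of variables $\bar x_j = x_j/y_0$ for $j\in I_{s+1}$, and this multiplies each of $\lambda_i$, $\lambda_{i,1}$, $\lambda_{i,2}$ by exactly the same factor $1/\prod_{j\in I_{s+1}}y_0^{m_{ij}}$. Moreover the invariant $\lambda_{i,0}$ attached to $D_0^\vee$ is by definition $q_i/\prod_{j\in I_{s+1}}y_0^{m_{ij}}$, so it carries precisely this same factor, with $q_i$ (the self-invariant of $D_0$ appearing in Proposition \ref{prop-toric-gen-func-inv}) playing the role now taken over by $\lambda_{i,0}$.

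First I would abbreviate $P_i := \prod_{j\in I_{s+1}}y_0^{m_{ij}}$ and record the four displayed identities in the uniform form $\lambda_i^\prime = \lambda_i/P_i$, $\lambda_{i,1}^\prime = \lambda_{i,1}/P_i$, $\lambda_{i,2}^\prime = \lambda_{i,2}/P_i$, together with $\lambda_{i,0} = q_i/P_i$. Forming the two products then gives
\[
\lambda_i^\prime\,\lambda_{i,0} = \frac{\lambda_i\, q_i}{P_i^2}, \qquad \lambda_{i,1}^\prime\,\lambda_{i,2}^\prime = \frac{\lambda_{i,1}\,\lambda_{i,2}}{P_i^2}.
\]
Since both sides share the common denominator $P_i^2$, the asserted equality $\lambda_i^\prime\lambda_{i,0} = \lambda_{i,1}^\prime\lambda_{i,2}^\prime$ is equivalent, after clearing $P_i^2$, to $\lambda_i q_i = \lambda_{i,1}\lambda_{i,2}$, which is exactly the content of Proposition \ref{prop-toric-gen-func-inv}.

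Thus the whole argument is a one-line reduction: divide the product relation of Proposition \ref{prop-toric-gen-func-inv} by $P_i^2$ and reinterpret the resulting factors as the primed invariants and $\lambda_{i,0}$. There is no genuine obstacle here, as the only point requiring care is the bookkeeping confirmation that the extra $y_0$-change of variables acts identically on all three of $X^\vee$, $X_1^\vee$ and $\tilde X_2^\vee$, and that it converts the self-invariant $q_i$ of $D_0$ into $\lambda_{i,0}$ with the same uniform factor $1/P_i$. Once that uniformity is checked, the product relation is inherited verbatim from the unprimed case.
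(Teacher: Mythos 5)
Your proposal is correct and coincides with the paper's (implicit) argument: the paper defines $\lambda_i^\prime$, $\lambda_{i,1}^\prime$, $\lambda_{i,2}^\prime$ as the unprimed invariants each multiplied by the same factor $1/\prod_{j\in I_{s+1}}y_0^{m_{ij}}$ and defines $\lambda_{i,0}=q_i/\prod_{j\in I_{s+1}}y_0^{m_{ij}}$, so the primed relation follows from Proposition \ref{prop-toric-gen-func-inv} by dividing through by the square of that common factor, exactly as you wrote.
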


\subsection{Periods}\label{sec:toric-period}

Recall that $p_1,\ldots,,p_r\in H^2(Y,\mathbb Z)$ is a nef integral basis and the toric divisors are
\[
\mathbf D_j=\sum_{i=1}^r m_{ij}p_i,\quad 1\leq j\leq m, 
\]
for some $m_{ij}$.

If we assume that $\rho_{s+1}=-K_Y-\sum_{l=0}^s c_1(L_l)$ is nef on $Y$, then the relative period for $(D_0^\vee,W_0)$ is
\begin{align}\label{period-D-0}
f_0^{D_0}(\lambda_0)=\sum_{\substack{d\in H_2(Y;\mathbb Z)\\ \forall j, \langle \mathbf D_j, d\rangle\geq 0}}\frac{\prod_{l=1}^s \langle \rho_l, d\rangle !}{\prod_{j=1}^m \langle \mathbf D_j, d\rangle !}\langle \rho_{0,1}, d \rangle !\langle \rho_{0,2}, d\rangle \langle \rho_{s+1},d\rangle! \lambda_0^d.
\end{align}
This relative period can be computed via the generalized functional invariant map as in \cite{DKY}*{Section 5}. This can also be obtained from the relative mirror theorem of \cite{FTY} when restricted to the toric case.

Suppose $\rho_{s+1}$ is nef when restricted to a toric subvariety $Y^\prime\subset Y$. Then the relative period for $(D_0^\vee, W_0)$ is a restriction of (\ref{period-D-0}) to $q_i=0$ for some of $i\in\{1,\ldots,r\}$.

We can compute the holomorphic period for $X^\vee$ via generalized functional invariants:
\begin{align*}
&\frac{1}{(1-y)}f_0^{D_0}(\lambda^\prime)\\
=&\sum_{\substack{d\in H_2(Y;\mathbb Z)\\ \forall j, \langle \mathbf D_j, d\rangle \geq 0}}\frac{\prod_{l=1}^{s+1} \langle \rho_l, d\rangle !}{\prod_{j=1}^m \langle \mathbf D_j, d\rangle !}\langle \rho_{0,1}, d\rangle !\langle \rho_{0,2}, d\rangle ! \langle \rho_{s+1},d\rangle!\frac{\lambda_0^d}{(1-y)^{1+\langle \rho_{0,1}, d\rangle }y^{\langle \rho_{0,2}, d\rangle}}\\
=&\sum_{\substack{d\in H_2(Y;\mathbb Z)\\ \forall j, \langle \mathbf D_j, d\rangle \geq 0}}\sum_{d_{0,1}\geq 0}\frac{\prod_{l=1}^{s+1} \langle \rho_l, d\rangle !}{\prod_{j=1}^m \langle \mathbf D_j, d\rangle !}\langle \rho_{0,1}, d\rangle !\langle \rho_{0,2}, d\rangle !\langle \rho_{s+1},d\rangle!\frac{(d_{0,1}+\langle \rho_{0,1}, d\rangle) !\lambda_0^d}{d_{0,1}!\langle \rho_{0,1}, d\rangle !y^{\langle \rho_{0,2}, d\rangle }}y^{d_{0,1}}.
\end{align*}
We obtain the holomorphic period for $X^\vee$:
\begin{align*}
f_0^{X}(\lambda_0)&=\frac{1}{2\pi i}\oint \frac{1}{(1-y)}f_0^{D_0}(\lambda^\prime)\frac{dy}{y}\\
&=\sum_{\substack{d\in H_2(Y;\mathbb Z)\\ \forall j, \langle \mathbf D_j, d\rangle \geq 0}}\frac{\prod_{l=1}^s \langle \rho_l, d\rangle !}{\prod_{j=1}^m \langle \mathbf D_j, d\rangle !}(\langle \rho_{0,1}, d\rangle+\langle \rho_{0,2}, d\rangle)!\langle \rho_{s+1},d\rangle!\lambda_0^d.\\
&=\sum_{\substack{d\in H_2(Y;\mathbb Z)\\ \forall j, \langle\mathbf  D_j, d\rangle \geq 0}}\frac{\prod_{l=0}^s \langle\rho_l, d\rangle !}{\prod_{j=1}^m \langle \mathbf D_j, d\rangle !}\langle \rho_{s+1},d\rangle!\lambda_0^d.
\end{align*}
The holomorphic periods for $X_1^\vee$ and $\tilde{X}_2^\vee$ are computed in a similar way, we have
\[
f_0^{X_1}(\lambda_0,y_1)=\sum_{\substack{d\in H_2(Y;\mathbb Z)\\ \forall j, \langle \mathbf D_j, d\rangle \geq 0}}\frac{\prod_{l=1}^s \langle \rho_l, d\rangle !}{\prod_{j=1}^m \langle \mathbf D_j, d\rangle !}\langle \rho_{0,1}, d\rangle !\langle \rho_{0,2}, d\rangle !\langle \rho_{s+1},d\rangle!\frac{\lambda_0^d}{(y_1)^{\langle \rho_{0,2}, d\rangle}};
\]
\[
f_0^{\tilde{X}_2}(\lambda_0,y)=\sum_{\substack{d\in H_2(Y;\mathbb Z)\\ \forall j, \langle \mathbf D_j, d\rangle \geq 0}}\sum_{d_{0}\geq 0}\frac{\prod_{l=1}^s \langle \rho_l, d\rangle !}{\prod_{j=1}^m \langle \mathbf D_j, d\rangle !}\frac{(d_{0}+\langle \rho_{0,1}, d\rangle) !\langle \rho_{0,2}, d\rangle !\lambda_0^d}{d_{0}!}\langle \rho_{s+1},d\rangle!(y)^{d_0}.
\]
Recall that $y=y_1=q_{0,2}/y_2$ and $\lambda_{i,0}=\frac{q_{i}}{\prod_{j\in I_{s+1}}^{a+b}y_0^{m_{ij}}}, i=1\ldots, r.$ Since $\lambda_0$ and $q$ are only differ by a scaling factor, we will simply replace $\lambda_0$ by $q$ for the rest of the section. Then we have 

\begin{theorem}\label{thm:gluing-period-general}
The following Hadamard product relation holds
\[
f_0^{X}(q)\star_q f_0^{D_0}(q)=\frac{1}{2\pi i}\oint f_0^{X_1}(q,y)\star_q f_0^{\tilde{X}_2}(q,y)\frac{dy}{y}.
\]
\end{theorem}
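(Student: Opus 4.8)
The plan is to prove the Hadamard product identity
\[
f_0^{X}(q)\star_q f_0^{D_0}(q)=\frac{1}{2\pi i}\oint f_0^{X_1}(q,y)\star_q f_0^{\tilde{X}_2}(q,y)\frac{dy}{y}
\]
by direct computation at the level of coefficients, exactly as in the proof of Theorem \ref{thm-period-coni-tran}. The four holomorphic periods have already been written down explicitly as sums over $d\in H_2(Y;\mathbb Z)$ (with the effectivity constraint $\langle \mathbf D_j,d\rangle\geq 0$), so the task is purely bookkeeping with factorials and the intersection pairings $\langle\rho_{0,1},d\rangle$, $\langle\rho_{0,2},d\rangle$, $\langle\rho_{s+1},d\rangle$. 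First I would recall that taking the Hadamard product $\star_q$ in the variable $q$ (equivalently in the multidegree $d$) means multiplying the coefficients indexed by the same $d$, so the left-hand side is a single sum over $d$ whose coefficient is the product of the coefficient of $f_0^{X}$ and the coefficient of $f_0^{D_0}$.

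Next I would expand the right-hand side. The periods $f_0^{X_1}(q,y)$ and $f_0^{\tilde X_2}(q,y)$ each carry an extra summation variable tracking powers of $y$: $f_0^{X_1}$ contributes the factor $y^{-\langle\rho_{0,2},d\rangle}$ (from $y_1=y$), while $f_0^{\tilde X_2}$ carries a free nonnegative index $d_0$ contributing $y^{d_0}$ together with the factorial $(d_0+\langle\rho_{0,1},d\rangle)!/d_0!$. After forming the Hadamard product $\star_q$ in $q$ — which again just multiplies the $d$-indexed coefficients while leaving the $y$-dependence as a product — the residue $\frac{1}{2\pi i}\oint(\cdots)\frac{dy}{y}$ extracts the coefficient of $y^0$. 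The plan is to show the surviving constant term forces $d_0=\langle\rho_{0,2},d\rangle$, which collapses the double sum back to a single sum over $d$. I would then simplify the resulting factorial: $(d_0+\langle\rho_{0,1},d\rangle)!=(\langle\rho_{0,2},d\rangle+\langle\rho_{0,1},d\rangle)!$, which is precisely the combined factor appearing in $f_0^{X}$ after the cancellation $\langle\rho_{0,1},d\rangle!\,\langle\rho_{0,2},d\rangle!$ against the corresponding factors in $f_0^{D_0}$ and $f_0^{X_1}$.

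The heart of the argument is the factorial matching, and it is essentially algebraic. On the left, $f_0^{X}$ has coefficient $\bigl(\prod_{l=0}^s\langle\rho_l,d\rangle!\bigr)\langle\rho_{s+1},d\rangle!/\prod_j\langle \mathbf D_j,d\rangle!$ and $f_0^{D_0}$ (the rank-zero, $q$-only specialization) supplies the additional factor $\langle\rho_{0,1},d\rangle!\,\langle\rho_{0,2},d\rangle!\,\langle\rho_{s+1},d\rangle!/\prod_j\langle \mathbf D_j,d\rangle!$ times the same combinatorial prefactor; their product gives the target coefficient. On the right, after setting $d_0=\langle\rho_{0,2},d\rangle$, the product of the two $q$-coefficients of $f_0^{X_1}$ and $f_0^{\tilde X_2}$ yields $\bigl(\langle\rho_{0,1},d\rangle+\langle\rho_{0,2},d\rangle\bigr)!\,\langle\rho_{0,1},d\rangle!\,\langle\rho_{0,2},d\rangle!$ multiplied by the common toric prefactor squared with the two copies of $\langle\rho_{s+1},d\rangle!$. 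I would verify that the two sides agree term by term in $d$; note that powers of $q$ are handled automatically since every coefficient carries $\lambda_0^d$ (equivalently $q^d$) and the identification (\ref{identification-X}) was already used to match the $q$-variables across all four models.

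The main obstacle I anticipate is entirely combinatorial rather than conceptual: correctly tracking the shifts $L_0=L_{0,1}\otimes L_{0,2}$, which give $\langle\rho_0,d\rangle=\langle\rho_{0,1},d\rangle+\langle\rho_{0,2},d\rangle$, and ensuring that the residue extraction in $y$ pins down $d_0$ to the right value so that the factorials recombine without leftover factors. One must also be careful about which period is taken with respect to the mirror of $D_0$ versus the mirror of the smooth anticanonical $D_{12}$ (the primed versus unprimed functional invariants of Proposition \ref{prop-toric-gen-func-inv-'}); here all four periods are written uniformly in terms of $\lambda_0$, so the consistency is built in. Once the $d_0=\langle\rho_{0,2},d\rangle$ collapse is justified, the remaining identity is a line of factorial algebra, and the theorem follows exactly as in the explicit cases of Sections \ref{sec:example-1} and \ref{sec:example-2}.
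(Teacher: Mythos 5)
Your proposal is correct and follows essentially the same route as the paper, which states Theorem \ref{thm:gluing-period-general} directly after displaying the four explicit period series and relies on the same coefficient-by-coefficient computation carried out in the proof of Theorem \ref{thm-period-coni-tran}: the residue in $y$ pins $d_0=\langle\rho_{0,2},d\rangle$, and the identity $\langle\rho_0,d\rangle=\langle\rho_{0,1},d\rangle+\langle\rho_{0,2},d\rangle$ makes the factorials recombine exactly as you describe. (The only discrepancy is the paper's own typo in the displayed formula for $f_0^{D_0}$, where $\langle\rho_{0,2},d\rangle$ is missing its factorial sign; your version with $\langle\rho_{0,2},d\rangle!$ is the one that makes the identity hold.)
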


Similarly, we have the Hadamard product relation among Picard-Fuchs operators. Let
\begin{align*}
I^{X}(q)=e^{\sum_{i=0}^r p_i\log q_i}\sum_{d\in \on{NE}(Y)_{\mathbb Z}}&\prod_{j=1}^m \left(\frac{\prod_{k=-\infty}^0(\mathbf D_j+k)}{\prod_{k=-\infty}^{\langle \mathbf D_j,d\rangle}(\mathbf D_j+k)}\right)\left(\prod_{l=0}^s \prod_{k=1}^{\langle \rho_l, d\rangle }(\rho_l+k)\right)\prod_{k=1}^{\langle \rho_{s+1}, d\rangle }(\rho_{s+1}+k)q^d;
\end{align*}
\begin{align*}
I^{D_0}(q)=e^{\sum_{i=1}^r p_i\log q_i}\sum_{d\in \on{NE}(Y)_{\mathbb Z}}&\prod_{j=1}^m \left(\frac{\prod_{k=-\infty}^0(\mathbf D_j+k)}{\prod_{k=-\infty}^{\langle \mathbf D_j,d\rangle}(\mathbf D_j+k)}\right)\left(\prod_{l=1}^s \prod_{k=1}^{\langle \rho_l, d\rangle }(\rho_l+k)\right)\\
&\cdot\left(\prod_{k=1}^{\langle \rho_{0,1}, d\rangle }(\rho_{0,1}+k)\prod_{k=1}^{\langle \rho_{0,2}, d\rangle }(\rho_{0,2}+k)\right)\prod_{k=1}^{\langle \rho_{s+1}, d\rangle }(\rho_{s+1}+k)q^{d};
\end{align*}
\begin{align*}
I^{X_1}(q,y)=e^{\sum_{i=1}^r p_i\log \lambda_{i,1}}\sum_{d\in \on{NE}(Y)_{\mathbb Z},d_0\geq 0}&\prod_{j=1}^m \left(\frac{\prod_{k=-\infty}^0(\mathbf D_j+k)}{\prod_{k=-\infty}^{\langle \mathbf D_j,d\rangle}(\mathbf D_j+k)}\right)\left(\prod_{l=1}^s \prod_{k=1}^{\langle \rho_l, d\rangle }(\rho_l+k)\right)\\
&\cdot\left(\prod_{k=1}^{\langle \rho_{0,1}, d\rangle }(\rho_{0,1}+k)\prod_{k=1}^{\langle \rho_{0,2}, d\rangle }(\rho_{0,2}+k)\right)\prod_{k=1}^{\langle \rho_{s+1}, d\rangle }(\rho_{s+1}+k)\frac{q^d}{y^{\langle \rho_{0,2}, d\rangle }},
\end{align*}
where $\lambda_{i,1}=\frac{q_{i,1}}{\prod_{k=a+1}^{a+b}y_1^{m_{ij_k}}}, i=1\ldots, r$;

\begin{align*}
I^{\tilde X_2}(q,y)=e^{\sum_{i=1}^r p_i\log q_i+p_0\log y}&\sum_{d\in \on{NE}(Y)_{\mathbb Z},d_0\geq 0}\prod_{j=1}^m \left(\frac{\prod_{k=-\infty}^0(\mathbf D_j+k)}{\prod_{k=-\infty}^{\langle \mathbf D_j,d\rangle}(\mathbf D_j+k)}\right)\left(\prod_{l=1}^s \prod_{k=1}^{\langle \rho_l, d\rangle }(\rho_l+k)\right)\\
&\cdot\frac{\left(\prod_{k=1}^{\langle \rho_{0,1}, d\rangle +d_0}(\rho_{0,1}+p_0+k)\prod_{k=1}^{\langle \rho_{0,2}, d\rangle }(\rho_{0,2}+k)\right)}{\prod_{k=1}^{d_0}(p_0+k)}\prod_{k=1}^{\langle \rho_{s+1}, d\rangle }(\rho_{s+1}+k)y^{d_0}q^d.
\end{align*}
\begin{theorem}\label{thm:general-case-I-function}
The Hadamard product relation among the bases of solutions to Picard-Fuchs equations is
\[
I^{X}(q)\star_q I^{D_0}(q)=\frac{1}{2\pi i}\oint I^{X_1}(q,y)\star_q I^{\tilde{X}_2}(q,y)\frac{dy}{y},
\]
where we set $p_0=\rho_{0,2}$. In the Hadamard product $*_q$, we treat $\log q_i$ as a variable that is independent from $q_i$. Alternatively, we can consider $\bar{I}(q):=I(q)/(\sum p_i \log q_i)$ and write Hadamard product relation for $\bar{I}$.
\end{theorem}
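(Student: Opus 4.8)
The plan is to mirror the proof of the period identity in Theorem~\ref{thm:gluing-period-general}, upgrading each scalar factorial to the corresponding cohomology-valued Pochhammer product $\prod_{k=1}^{\langle \rho,d\rangle}(\rho+k)$ and checking that the same combinatorial identities survive the upgrade. First I would isolate the data common to all four $I$-functions: the toric factor $\prod_{j=1}^m \left(\prod_{k=-\infty}^0(\mathbf D_j+k)\big/\prod_{k=-\infty}^{\langle \mathbf D_j,d\rangle}(\mathbf D_j+k)\right)$, the factors $\prod_{l=1}^s\prod_{k=1}^{\langle \rho_l,d\rangle}(\rho_l+k)$, and $\prod_{k=1}^{\langle \rho_{s+1},d\rangle}(\rho_{s+1}+k)$. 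These appear identically in each $I$-function, so under the Hadamard product $\star_q$, which multiplies the coefficients of matching $q^d$, they simply get squared on both sides of the asserted identity and may be cancelled. The entire content therefore reduces to the $\rho_{0,1}$, $\rho_{0,2}$ and $p_0$ insertions.

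Next I would track the $y$-dependence carefully, since it governs the residue $\frac{1}{2\pi i}\oint(\cdots)\frac{dy}{y}$, which extracts the coefficient of $y^0$. The power-series part of $I^{X_1}$ carries $y^{-\langle\rho_{0,2},d\rangle}$, while its prefactor $e^{\sum_i p_i\log\lambda_{i,1}}$ contributes $y^{-\rho_{0,2}}$ because $\sum_i p_i\sum_{k=a+1}^{a+b} m_{ij_k}=\rho_{0,2}$; the power-series part of $I^{\tilde X_2}$ carries $y^{d_0}$ and its prefactor contributes $y^{p_0}$. After setting $p_0=\rho_{0,2}$ the two cohomology-valued prefactor powers cancel, leaving net $y$-exponent $d_0-\langle\rho_{0,2},d\rangle$, so the residue forces $d_0=\langle\rho_{0,2},d\rangle$.

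Substituting $d_0=\langle\rho_{0,2},d\rangle$ and $p_0=\rho_{0,2}$, I would then check that the $\tilde X_2$-insertion $\frac{\prod_{k=1}^{\langle\rho_{0,1},d\rangle+d_0}(\rho_{0,1}+p_0+k)}{\prod_{k=1}^{d_0}(p_0+k)}\cdot\prod_{k=1}^{\langle\rho_{0,2},d\rangle}(\rho_{0,2}+k)$ telescopes: the denominator cancels the trailing factor, and using $\rho_0=\rho_{0,1}+\rho_{0,2}$ together with $\langle\rho_0,d\rangle=\langle\rho_{0,1},d\rangle+\langle\rho_{0,2},d\rangle$ the numerator collapses to $\prod_{k=1}^{\langle\rho_0,d\rangle}(\rho_0+k)$, which is exactly the $l=0$ factor of $I^X$. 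Meanwhile the $X_1$-insertion $\prod_{k=1}^{\langle\rho_{0,1},d\rangle}(\rho_{0,1}+k)\prod_{k=1}^{\langle\rho_{0,2},d\rangle}(\rho_{0,2}+k)$ is precisely the $l=0$ split appearing in $I^{D_0}$. Reassembling with the common factors, the residue of $I^{X_1}\star_q I^{\tilde X_2}$ at degree $d$ reproduces the $q^d$-coefficient of $I^X\star_q I^{D_0}$; the matching of the $q$-powers and scaling factors is guaranteed by the product relation among generalized functional invariants in Proposition~\ref{prop-toric-gen-func-inv-'}.

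The main obstacle is the bookkeeping of the cohomology-valued prefactors, and in particular justifying the identification $p_0=\rho_{0,2}$: one must confirm that the formal variable $p_0$ recording the fibre direction of the projective bundle $\mathbb P_Y(\mathcal O\oplus L_{0,2}^{-1})$ used to realise $\tilde X_2$ corresponds to the class $\rho_{0,2}$, so that the prefactor $y$-powers cancel and the Pochhammer ratio telescopes as a genuine product in the cohomology ring. Once the $y$-exponent cancellation and the telescoping are verified, the remainder is the same factorial manipulation as in the period computation, now carried out with $(\rho+k)$ in place of integers and interpreted modulo the nilpotence of the cohomology classes.
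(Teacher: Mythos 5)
Your proposal is correct and follows essentially the same route the paper intends: the paper states this theorem without an explicit proof, presenting it as the cohomology-valued upgrade of the period computation in Theorem \ref{thm:gluing-period-general}, and your elaboration (cancelling the common toric and $\rho_l$, $\rho_{s+1}$ factors, using $p_0=\rho_{0,2}$ so the prefactor $y$-powers cancel and the residue forces $d_0=\langle\rho_{0,2},d\rangle$, then collapsing the $\tilde X_2$ Pochhammer ratio to $\prod_{k=1}^{\langle\rho_0,d\rangle}(\rho_0+k)$) is exactly the computation that verifies it. The key cancellations you identify, in particular $\sum_i p_i\sum_{k=a+1}^{a+b}m_{ij_k}=\rho_{0,2}$ and $\langle\rho_0,d\rangle=\langle\rho_{0,1},d\rangle+\langle\rho_{0,2},d\rangle$, are the right ones.
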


\begin{remark}\label{rmk-periods}
The Hadamard product relation between periods is also true if we remove the factor $\langle \rho_{s+1},d\rangle!$ from each period. Similarly, we can remove the factor $\prod_{k=1}^{\langle \rho_{s+1}, d\rangle }(\rho_{s+1}+k)$ from $I$-functions appeared in Theorem \ref{thm:general-case-I-function}. Then the hypergeometric series are related to different enumerative invariants. For example, $I^{X}(q)$ is related to a generating function of genus zero relative Gromov-Witten invariants of $(X,D)$. Removing the factor $\prod_{k=1}^{\langle \rho_{s+1}, d\rangle }(\rho_{s+1}+k)$ from $I^{X}(q)$, it is related to a generating function of genus zero absolute Gromov-Witten invariants $X$ instead of relative Gromov-Witten invariants of $(X,D)$.
\end{remark}

\section{Degeneration to the normal cones}\label{sec:deg-normal-cone}
In this section, we consider a special kind of degeneration: degeneration to the normal cones. Let $X$ be a quasi-Fano variety with its smooth anticanonical divisor $D$. We can consider the degeneration to the normal cone of $X$ with respect to a divisor $D_1$. More precisely, let $\mathcal X$ be the blow-up of $X\times \mathbb A^1$ along the subvariety $D_1\times \{0\}$. There is a projection map $\pi:\mathcal X\rightarrow \mathbb A^1$ to the second factor. The central fibre $\pi^{-1}(0)$ is $X\cup_{D_1}\mathbb P_{D_1}(N_{D_1}\oplus \mathcal O)$.

Let $D_1+D_2\in |-K_X|$ be simple normal crossings and $D_{12}=D_1\cap D_2$. Then, the degeneration of $X$ also gives a Tyurin degeneration of $D$ into $D_2\subset X$ and a blow-up $\tilde D_1$ of $D_1$. The blown-up variety $\tilde D_1$ is in $\mathbb P_{D_1}(N_{D_1}\oplus \mathcal O)$. Therefore, the degeneration that we are considering is the following:
\[
(X,D)\leadsto (X,D_2)\cup_{(D_1,D_{12})}(\mathbb P_{D_1}(N_{D_1}\oplus \mathcal O), \tilde D_1).
\]

Then Conjecture \ref{conj-gluing-1} states that the LG mirror of $(X,D)$ can be obtained by gluing rank $2$ LG mirrors of $(X,D_1+D_2)$ and $(\mathbb P_{D_1}(N_{D_1}\oplus \mathcal O),D_1+\tilde D_1)$. Recall that, suppose $(X^\vee,h=(h_1,h_2))$ is the rank $2$ LG mirror of $(X,D_1+D_2)$, then $W=h_1+h_2: X^\vee\rightarrow \mathbb C$ is a non-proper LG model. By mirror symmetry, it is expected that the LG model $(\bar {X^\vee},\bar{W})$ of $(X,D)$ is a partial (i.e. fibrewise) compactification of the non-proper LG model $(X^\vee,W)$. Our conjecture suggests the following.
\begin{conjecture}\label{conj-compactification}
Given a quasi-Fano variety $X$ with its smooth anticanonical divisor $D$. Let $D_1+D_2$ be a simple normal crossings anticanonical divisor of $X$. The ordinary LG model of $(X,D)$ is a (partial) compactification of the rank $2$ LG model of $(X,D_1+D_2)$ and the compactification can be obtained by gluing the rank $2$ LG mirror of $(\mathbb P_{D_1}(N_{D_1}\oplus \mathcal O),D_1+\tilde D_1)$ to the rank $2$ LG mirror of $(X,D_1+D_2)$.
\end{conjecture}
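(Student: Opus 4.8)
The plan is to deduce this statement from Conjecture~\ref{conj-gluing-1} specialized to the degeneration to the normal cone, and then to upgrade the resulting topological gluing to the finer assertion that it is a (partial) compactification. First I would place the degeneration $(X,D)\leadsto (X,D_2)\cup_{(D_1,D_{12})}(\mathbb P_{D_1}(N_{D_1}\oplus \mathcal O),\tilde D_1)$ into the framework of Section~\ref{sec:top-gluing}, identifying the two pieces of the central fibre: the first is $X$ itself carrying the simple normal crossings pair $D_1+D_2$ (with $D_1$ in the role of the common divisor $D_0$), and the second is the projective bundle $\mathbb P_{D_1}(N_{D_1}\oplus \mathcal O)$ carrying $D_1+\tilde D_1$ (again with $D_1$ as common divisor). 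Conjecture~\ref{conj-gluing-1} then predicts that the rank $1$ LG model of $(X,D)$ is the topological gluing, along the common $D_1$-factor, of the rank $2$ LG model $(X^\vee,h=(h_1,h_2))$ of $(X,D_1+D_2)$ and the rank $2$ LG mirror of the projective bundle. The content specific to this conjecture is that this gluing coincides with a compactification of the non-proper superpotential $W=h_1+h_2$.

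The key geometric input is to isolate the direction in which $W=h_1+h_2\colon X^\vee\to\mathbb C$ fails to be proper. In the hybrid model of Definition~\ref{defn:hybrid-LG} the factor $h_1$ is the one whose generic fibre is mirror to $(D_1,D_{12})$, and the failure of properness occurs in the $h_1$-direction, consistent with the normal cone being taken with respect to $D_1$. The second step is then to show that the rank $2$ LG mirror of $\mathbb P_{D_1}(N_{D_1}\oplus\mathcal O)$ supplies exactly the fibres needed to cap this direction off. Since $\mathbb P_{D_1}(N_{D_1}\oplus\mathcal O)$ is a $\mathbb P^1$-bundle over $D_1$ with zero and infinity sections $D_1$ and $\tilde D_1$, its mirror can be written down explicitly following \cite{Givental98} and \cite{CCGK}, and gluing along the common $D_1$-factor attaches the missing fibre over the infinite end of the $h_1$-base, compactifying it to $\mathbb P^1$ as in Section~\ref{sec:top-gluing}. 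The word \emph{partial} is essential: the bundle caps only the $D_1$-direction, leaving the $D_2$-direction untouched, which is precisely what smoothing $D_1+D_2$ to a single smooth anticanonical $D$ demands on the mirror side.

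To turn this picture into verifiable evidence I would check the two invariants controlled by Sections~\ref{sec:gluing} and~\ref{sec:toric}. On the topological side, the Euler-number identity of Theorem~\ref{thm:topo-gluing} (and its rank $2$ refinement Theorem~\ref{thm:topo-gluing-2}) must remain consistent after compactification: the relative Euler characteristic of the proper model should equal the sum of the relative contributions of $(X^\vee,h)$ and of the projective-bundle mirror, the annular overlap contributing zero by the Wang-sequence argument from the proof of Theorem~\ref{thm:topo-gluing}. On the period side, for toric complete intersections I would apply Theorem~\ref{thm:gluing-period-general} and Theorem~\ref{thm:general-case-I-function} with the second piece taken to be $\mathbb P_{D_1}(N_{D_1}\oplus\mathcal O)$, compute its functional invariant, and verify that the residue integral over the gluing variable $y$ reproduces the relative period of the ordinary LG model of $(X,D)$; here the product relation $\lambda_i q_i=\lambda_{i,1}\lambda_{i,2}$ of Proposition~\ref{prop-toric-gen-func-inv} should specialize to the normal-cone setting and render the check a direct computation.

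The hard part will be to establish intrinsically, rather than merely through matching Euler numbers and periods, that gluing the projective-bundle mirror is a compactification and not a more general Mayer--Vietoris gluing. Concretely, one must show that the monodromy-matching condition of Section~\ref{sec:top-gluing} forces the fibres contributed by the bundle to glue as the fibres over the infinite end of $h_1$, so that the union is proper over the compactified $h_1$-base and the superpotential extends properly. I expect this to demand a careful description of the normal crossing structure of the glued LG model near its divisor at infinity, in the spirit of the fibrewise compactifications highlighted in the introduction; moreover, writing down the mirror of $\mathbb P_{D_1}(N_{D_1}\oplus\mathcal O)$ in enough generality to handle a non-toric $X$ is the principal obstacle to a proof beyond the toric complete intersection case.
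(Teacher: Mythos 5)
The statement you are addressing is a conjecture, and the paper offers no proof of it—only a heuristic derivation (specializing Conjecture~\ref{conj-gluing-1} to the degeneration to the normal cone of $X$ along $D_1$) together with supporting evidence: the explicit example of Section~\ref{sec:ex-deg-normal-cone} where the functional-invariant product relation and the Hadamard product of periods are verified, and the Euler-characteristic identity $\chi(\bar{X^\vee},\bar{W}^{-1}(t))=\chi(X^\vee,h_1^{-1}(t_1)\cup h_2^{-1}(t_2))$ proved later via the relative Mayer--Vietoris sequence. Your plan follows essentially this same route and correctly identifies as the genuinely open point the intrinsic claim that the gluing with the mirror of $\mathbb P_{D_1}(N_{D_1}\oplus\mathcal O)$ is a fibrewise compactification in the $D_1$-direction rather than a mere topological gluing, which is precisely what the paper leaves unproven.
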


When $D\subset X$ is a toric complete intersection of a toric variety determined by a nef partition. The refinement of the nef partition discussed in \cite{DHT}*{Section 3} gives a degeneration of $(X,D)$ which is compatible with a Tyurin degeneration of $D$. One can obtain the relation between their generalized functional invariants, periods, and $I$-functions following the procedure of \cite{DKY}*{Section 5}. This is similar to the discussion in Section \ref{sec:toric}, but slightly different. Suppose $X$ is a complete intersection in a toric variety $Y$ defined by a generic section of $E=L_0\oplus L_1\oplus\cdots \oplus  L_s$ and $D\subset X$ is a Calabi-Yau intersection in $Y$ defined by a generic section of $E=L_0\oplus L_1\oplus\cdots \oplus  L_s\oplus L_{s+1}$. For degeneration to the normal cones, we consider a refinement of the nef partition with respect to the part $L_{s+1}$. While, in Section \ref{sec:toric}, we consider a refinement of the nef partition with respect to $L_i$ for $i\in \{0,1,\ldots,s\}$. We will work out an example of the degeneration to the normal cone explicitly in the next section. The general case works similarly.

\subsection{An example}\label{sec:ex-deg-normal-cone}
In this section, we will consider an example of the degeneration to the normal cone. We consider a degeneration to the normal cone of $\mathbb P^3$ with respect to a smooth cubic surface $S$:
\[
\mathbb P^3\leadsto \mathbb P^3\cup_S \mathbb P(N_S\oplus O_S).
\]
Then a quartic $K3$ surface in $\mathbb P^3$ degenerates to $\mathbb P^2\subset \mathbb P^3$ and a blow-up of $S$ along a complete intersection of a degree one and a degree four hypersurfaces. This blown-up variety $\tilde D$ is a hypersurface in $Y:=\mathbb P(N_S\oplus O_S)$ defined by the vanishing locus of a generic section of $\mathcal O_Y(1)\otimes\pi^* \mathcal O_S(1)$, where $\pi: Y \rightarrow S$ is the projection.

Therefore, what we consider is the degeneration of
$(\mathbb P^3,K3)$ into $(\mathbb P^3,\mathbb P^2)$ and $(\mathbb P(N_S\oplus O_S),\tilde D)$ intersecting along $S$. We study the relation among their mirrors. The mirror of $(\mathbb P^3,K3)$ is an LG model given by the fibrewise compactification of
\begin{align*}
W: (\mathbb C^*)^3&\rightarrow \mathbb C\\
 (x_0,x_1,x_2)&\mapsto x_0+x_1+x_2+\frac {q_1}{x_0x_1x_2}.
\end{align*}
It can be rewritten as
\begin{align*}
W:X^\vee\rightarrow \mathbb C\\
(x_0,x_1,x_2,y)\mapsto y,
\end{align*}
where $X^\vee$ is the fibrewise compactification of
\[
\left\{ (x_0,x_1,x_2,y_1)\in (\mathbb C^*)^4\left| x_0+x_1+x_2+\frac{q_{1}}{x_0x_1x_2}=y\right. \right\}.
\]

The rank $2$ LG model mirrors to $(\mathbb P^3,\mathbb P^2+S)$ is given by a fibrewise compactification of
\begin{align*}
h_{1}:=(h_{11},h_{12}): (\mathbb C^*)^3&\rightarrow (\mathbb C)^2\\
 (x_0,x_1,x_2)&\mapsto (x_0, y_1:=\frac {q_{1,1}}{x_0x_1x_2}+x_1+x_2).
\end{align*}
The rank $2$ LG model mirrors to $(\mathbb P(N_S\oplus O_S),\tilde D+S)$ is given by
\begin{align*}
h_2:X_2^\vee\rightarrow (\mathbb C)^2\\
(x_0,x_1,x_2,y,y_2)\mapsto (y,y_2),
\end{align*}
where $X^\vee_2$ is the fibrewise compactification of
\[
\left\{ (x_0,x_1,x_2,y,y_2)\in (\mathbb C^*)^5\left| x_1+x_2+\frac{q_{1,2}}{x_0x_1x_2(y-x_0)^3}=1, (y-x_0)y_2=q_{2,2}\right. \right\}.
\]

Recall that the mirror LG model of the cubic surface $D_0$ is defined by
\begin{align*}
W_1:D_0^\vee\rightarrow \mathbb C\\
(x_0,x_1,x_2)\mapsto x_0,
\end{align*}
where $D_0^\vee$ is the fibrewise compactification of
\[
\left\{(x_0,x_1,x_2)\in (\mathbb C^*)^3\left| x_1+x_2+\frac{q_{1,0}}{x_0x_1x_2}=1\right.\right\}.
\]
The functional invariants are the following:
\begin{align}\label{func-inv-normal-cone}
\lambda=\frac{q_1}{x_0(y-x_0)^3}, \quad \lambda_0=\frac{q_{1,0}}{x_0}, \quad \lambda_1=\frac{q_{1,1}}{x_0y_1^3}, \quad \lambda_2=\frac{q_{1,2}y_2^3}{(y-q_{2,2}/y_2)q_{2,2}^3}. 
\end{align}
We have the following identification of variables
\begin{align}\label{identification-deg-normal-cone}
q_1=q_{1,0}=q_{1,1}=\frac{q_{1,2}y_2^3}{q_{2,2}^3}, \quad y_1=q_{2,2}/y_2=y-x_0.
\end{align}
The relation among their functional invariants are the following
\[
\lambda\cdot \lambda_0=\lambda_1\cdot \lambda_2.
\]

The relative period for $(S^\vee, W_0)$ is
\[
f_0^{S^\vee}(q_{1,0},x_0)=\sum_{d_1\geq 0}(q_{1,0}/x_0)^{d_1}\frac{(3d_1)!}{(d_1!)^3}.
\]
The relative period of $(\mathbb P^3,K3)$ can be computed as a residue integral of the pullback of
\[
\frac{1}{(1-x_0/y)}f_0^{E^\vee}(\tilde\lambda).
\]
by the functional invariant
\[
\lambda=\frac{q_1}{x_0(y-x_0)^3}=\frac{q_1}{x_0y^3}\frac{1}{(1-x_0/y)^3}.
\]
Therefore, we have
\begin{align*}
\frac{1}{(1-x_0/y)}f_0^{E^\vee}(\lambda)&=\frac{1}{(1-x_0/y)}\sum_{d_1\geq 0}\frac{(3d_1)!}{(d_1!)^3}\left(\frac{q_1}{x_0y^3}\frac{1}{(1-x_0/y)^3}\right)^{d_1}\\
&=\sum_{d_1,d_{0,1}\geq 0}\frac{(3d_1)!}{(d_1!)^3}\left(\frac{q_1}{x_0y^3}\right)^{d_1}\frac{(3d_1+d_{0,1})!}{(3d_1)!d_{0,1}!}(x_0/y)^{d_{0,1}}\\
&=\sum_{d_1,d_{0,1}\geq 0}\frac{(3d_1+d_{0,1})!}{(d_1!)^3d_{0,1}!}\left(\frac{q_1}{x_0y^3}\right)^{d_1}\left(\frac{x_0}{y}\right)^{d_{0,1}}
\end{align*}

After taking a residue, we obtain the holomorphic period of $(\mathbb P^3,K3)$: 
\begin{align*}
 f_0^{X^\vee}(q_1,y)&=\frac{1}{2\pi i}\oint \frac{1}{(1-x_0/y)}f_0^{E^\vee}(\lambda) \frac{dx_0}{x_0}\\
&=\sum_{d_1\geq 0}\frac{(3d_1+d_1)!}{(d_1!)^3d_1!}\left(\frac{q_1}{y^3}\right)^{d_1}\left(\frac{1}{y}\right)^{d_{1}}\\
&=\sum_{d_1\geq 0}\frac{(4d_1)!}{(d_1!)^4}\left(\frac{q_1}{y^4}\right)^{d_1}.\\
\end{align*}
This again matches with the previous computation of relative periods in \cite{DKY}.

Relative period for the rank $2$ LG model of $(\mathbb P^3,\mathbb P^2+S)$ is
\begin{align*}
f_0^{X_1^\vee}(q_{1,1},y_1,x_0)=\sum_{d_1\geq 0}\left(\frac{q_{1,1}}{x_0y_1^3}\right)^{d_1}\frac{(3d_1)!}{(d_1!)^3}.
\end{align*}

Now, we compute the relative period for the rank $2$ LG model of $(\mathbb P(N_S\oplus O_S),\tilde D+S)$. Recall that the functional invariant is
\[
\lambda_2=\frac{q_{1,2}y_2^3}{(y-q_{2,2}/y_2)q_{2,2}^3}=\frac{q_{1,2}y_2^3}{yq_{2,2}^3}\frac{1}{1-q_{2,2}/(y_2y)}.
\]
The relative period is
\begin{align*}
&\frac{1}{1-q_{2,2}/(y_2y)}f_0^{E^\vee}(\lambda_2)\\
=&\frac{1}{1-q_{2,2}/(y_2y)} \sum_{d_1\geq 0}\left(\frac{q_{1,2}y_2^3}{yq_{2,2}^3}\right)^{d_1}\left(\frac{1}{1-q_{2,2}/(y_2y)}\right)^{d_1}\frac{(3d_1)!}{(d_1!)^3}\\
=&\sum_{d_1,d_{0,2}\geq 0}\left(\frac{q_{1,2}y_2^3}{yq_{2,2}^3}\right)^{d_1}\frac{(d_1+d_{0,2})!}{d_1!d_{0,2}!}\left(\frac{q_{2,2}}{y_2y}\right)^{d_{0,2}}\frac{(3d_1)!}{(d_1!)^3}.
\end{align*}
Under the identification (\ref{identification-deg-normal-cone}), the relative period can be rewritten as
\[
f_0^{X_2^\vee}(q_1,y_1,y)=\sum_{d_1,d_{0,2}\geq 0}\left(\frac{q_1}{y}\right)^{d_1}\frac{(d_1+d_{0,2})!}{d_1!d_{0,2}!}\left(\frac{y_1}{y}\right)^{d_{0,2}}\frac{(3d_1)!}{(d_1!)^3}.
\]
Then we can glue periods as follows
\begin{align}\label{gluing-period-degen-normal-cone}
\notag    &\frac{1}{2\pi i}\oint f_0^{X_1^\vee}(q_1,y_1,x_0)\star_{q_1} f_0^{X_2^\vee}(q_1,y_1,y)\frac{dy_1}{y_1}\\
 \notag   =&\frac{1}{2\pi i}\oint \left(\sum_{d_1\geq 0}\left(\frac{q_1}{x_0y_1^3}\right)^{d_1}\frac{(3d_1)!}{(d_1!)^3}\right)\star_{q_1} \left(\sum_{d_1,d_{0,2}\geq 0}\left(\frac{q_1}{y}\right)^{d_1}\frac{(d_1+d_{0,2})!}{d_1!d_{0,2}!}\left(\frac{y_1}{y}\right)^{d_{0,2}}\frac{(3d_1)!}{(d_1!)^3}\right)\frac{dy_1}{y_1}\\
\notag    =&\sum_{d_1\geq 0}\left(\frac{q_1}{x_0}\right)^{d_1}\frac{(3d_1)!}{(d_1!)^3} \left(\frac{q_1}{y}\right)^{d_1}\frac{(d_1+3d_1)!}{d_1!(3d_1)!}\left(\frac{1}{y}\right)^{3d_1}\frac{(3d_1)!}{(d_1!)^3}\\
\notag    =&\left(\sum_{d_1\geq 0}\frac{(4d_1)!}{(d_1!)^4}\left(\frac{q_1}{y^4}\right)^{d_1}\right)\star_{q_1} \left(\sum_{d_1\geq 0}\left(\frac{q_1}{x_0}\right)^{d_1}\frac{(3d_1)!}{(d_1!)^3}\right)\\
    =& f_0^{X^\vee}(q_1,y) \star_{q_1} f_0^{S^\vee}(q_1,x_0).
\end{align}

Just like previous examples, these periods are solutions to systems of PDEs. We can write down the bases of solutions for the systems of PDEs and they also satisfy the Hadamard product relation.
\section{Iterating the Doran-Harder-Thompson conjecture}\label{sec:iterating-DHT}

In this section, we apply the conjecture of gluing rank $2$ LG models in Section \ref{sec:gluing} to iterate the Doran-Harder-Thompson conjecture. 

We consider a Tyurin degeneration of a Calabi-Yau variety $X$ such that the central fibre of the degeneration is a simple normal crossing variety $X_1\cup_D X_2$ where $X_1$ and $X_2$ are quasi-Fano varieties and $D$ is their common smooth anti-canonical divisor. We further degenerate the pairs $(X_1,D)$ and $(X_2,D)$ into $(X_{11},D_{1})\cup_{D_{10}}(X_{12},D_{2})$ and $(X_{21},D_{1})\cup_{D_{20}}(X_{22},D_{2})$ respectively, such that the degeneration of $D$ into $D_1\cup_{D_1\cap D_2}D_2$ is a Tyurin degeneration. Then by \cite{DHT} and Conjecture \ref{conj-gluing-1}, we expect that the mirror of $X$ can be obtained by gluing four rank $2$ LG models.

Suppose $X$ is a Calabi-Yau complete intersection in a toric variety given by a nef partition. A refinement of the nef partition gives a Tyurin degeneration of $X$ into a union of quasi-Fano varieties $X_1$ and $X_2$ meeting normally along $D$. Following \cite{DKY}*{Section 5}, $X_1$, $X_2$ and $D$ are still complete intersections in certain toric varieties. We can further degenerate $X_1$ and $X_2$ following Section \ref{sec:toric}. Then $X_{11}, X_{12}, X_{21}$ and $X_{22}$ are all toric complete intersections. The relation among their generalized functional invariants follows from the relation that we obtained in \cite{DKY}*{Section 5} and Section \ref{sec:toric}. We explain it here in full detail for the degeneration of the quintic threefold.

We consider the degeneration of the quintic threefold $Q_5\subset \mathbb P^4$ into $(\mathbb P^3,K3)$ and $(\on{Bl}_{C_{1,5}}Q_4,K3)$, where $\on{Bl}_{C_{1,5}}Q_4$ is the blow-up of a quartic fourfold $Q_4$ along a complete intersection of degree $1$ and $5$ hypersurfaces. Then we further degenerate $(\mathbb P^3,K3)$ into $(\mathbb P^3,\mathbb P^2)$ and $(\mathbb P(N_S\oplus \mathcal O),\tilde D)$ intersecting along a cubic surface $S_1$ which is described in Section \ref{sec:ex-deg-normal-cone}. We also degenerate $(\on{Bl}_{C_{1,5}}Q_4,K3)$ into $(\on{Bl}_{C_{1,5}}\mathbb P^3,\mathbb P^2)$ and $(\on{Bl}_{C_{1,4}}\on{Bl}_{C_{1,5}}Q_3,\tilde D)$ intersecting along a common divisor $S_2$. 

The blown-up varieties can be realized as complete intersections in toric varieties. The variety $\on{Bl}_{C_{1,5}}Q_4$ has been studied in \cite{DKY}*{Section 4.2}. It is a complete intersection of degrees $(1,1)$ and $(4,0)$ in the toric variety $\mathbb P(\mathcal O_{\mathbb P^4}(-4)\oplus \mathcal O_{\mathbb{P}^4})$. The variety $\on{Bl}_{C_{1,5}}\mathbb P^3$ can be constructed similarly. It is a hypersurface of degree $(1,1)$ in the toric variety $\mathbb P(\mathcal O_{\mathbb P^3}(-4)\oplus \mathcal O_{\mathbb{P}^3})$. Similarly, $\on{Bl}_{C_{1,4}}\on{Bl}_{C_{1,5}}Q_3$ can be considered as follows. Let $X^\prime=\on{Bl}_{C_{1,5}}Q_3$. Then $X^\prime$ is a complete intersection given by the zero locus of generic sections of lines bundles  $\mathcal O_Y(1)\otimes \pi^*\mathcal O_{\mathbb P^4}(1)$ and $\pi^*\mathcal O_{\mathbb P^4}(3)$ in the toric variety 
\[
\pi:Y:=\mathbb P(\mathcal O_{\mathbb P^4}(-4)\oplus \mathcal O_{\mathbb{P}^4})\rightarrow \mathbb P^4.
\]
Then $\on{Bl}_{C_{1,4}}\on{Bl}_{C_{1,5}}Q_3=\on{Bl}_{C_{1,4}}X^\prime$ is a complete intersection given by the zero locus of generic sections of lines bundles $\mathcal O_{Y^\prime}(1)\otimes (\pi^{\prime})^*\pi^*\mathcal O_{\mathbb P^4}(1)$, $(\pi^{\prime})^*\mathcal O_Y(1)\otimes (\pi^{\prime})^*\pi^*\mathcal O_{\mathbb P^4}(1)$ and $(\pi^{\prime})^*\pi^*\mathcal O_{\mathbb P^4}(3)$  in 
\[
\pi^\prime:Y^\prime=\mathbb P_{Y}(\pi^*\mathcal O_{\mathbb P^4}(-3)\oplus \mathcal O_Y)\rightarrow Y.
\]

The mirrors can be written down following Section \ref{sec:toric}.
Recall that the functional invariant for the mirror of $(\mathbb P^3,K3)$ with respect to the mirror cubic curve family is 
\[
\lambda_1=\frac{q_{1,1}}{x_0(y-x_0)^3}
\]
as in (\ref{func-inv-normal-cone}). We can compute the functional invariants for mirrors of $Q_5$, $(\on{Bl}_{C_{1,4}}Q_4, K3)$ and the quartic $K3$:
\[
\lambda=\frac{q_1}{x_0(1-y)(y-x_0)^3}, \lambda_2=\frac{q_{1,2}y_2^4}{x_0(1-x_0)^3(1-q_2/y_2)}, \lambda_0=\frac{q_{1,0}}{x_0(1-x_0)^3}. 
\]
Under the identifications
\[
y=q_2/y_2, \quad \text{and } q_1=q_{1,1}=q_{1,2}y_2^4=q_{1,0},
\]
we have the product relation among functional invariants
\[
\lambda\lambda_0=\lambda_1\lambda_2.
\]

\begin{remark}
Note that in our earlier paper \cite{DKY}, we explained how to glue two functional invariants $\lambda_1=\frac{Q_1}{(y-1)^4}$ and $\lambda_2=\frac{Q_2}{y}$ to get $\lambda=\frac{Q}{y(y-1)^4}$. By replacing $Q_i$ with the expression $\lambda_0$ above, we obtain the expressions $\lambda_1,\lambda_2$. 
In this sense, the gluing formula above is really the same as starting with our old one and then replacing the parameters with $\lambda_0$, as was described above in Section \ref{sec:toric}.
\end{remark}

Here is a brief analysis of the singular loci for the mirror cubic elliptic curve fibration of the mirror quintic threefold. 
Recall once more that the mirror cubic family of elliptic curves has singular fibres of types $\textrm{I}_3,\textrm{I}_1,\textrm{IV}^*$ over $0,\frac{1}{3^3},\infty$ respectively. 

\textbf{Functional invariant $\lambda_1$:} 
\begin{itemize}
    \item $\lambda_1^{-1}(0)=\{x_0=\infty\}\cup\{(y-x_0)^3=\infty\}$ supports $\textrm{I}_3$ and $\textrm{I}_9$ singular fibres respectively.
    \item $\lambda_1^{-1}(\infty)=\{x_0=0\}\cup\{(y-x_0)^3=0\}$; the first component supports $\textrm{IV}^*$-fibres with the second component supporting smooth fibres. 
    \item The locus $\lambda_1^{-1}(\frac{1}{3^3})=\{x_0(y-x_0)^3=3^3q_1\}$ supports $\textrm{I}_1$-fibres generically. 
\end{itemize} 

\textbf{Functional invariant $\lambda_2$:} 
\begin{itemize}
    \item $\lambda_2^{-1}(0)=\{x_0=\infty\}\cup\{y=1\}\cup\{(1-x_0)^3=\infty\}$ with the first two components supporting $\textrm{I}_3$-fibres and the last supporting $\textrm{I}_9$-fibres. 
    \item $\lambda_2^{-1}(\infty)=\{x_0=0\}\cup\{(1-y)=\infty\}\cup\{(1-x_0)^3=0\}$. The first two  components support $\textrm{IV}^*$-fibres with the third component supporting smooth fibres. 
    \item The locus $\lambda_2^{-1}(\frac{1}{3^3})=\{x_0(1-x_0)^3)(1-y)=3^3q_1\}$ supports $\textrm{I}_1$-fibres.
\end{itemize}

\textbf{Functional invariant $\lambda_0$:} 
\begin{itemize}
    \item $\lambda_0^{-1}(0)=\{x_0=\infty\}\cup\{(1-x_0)^3=\infty\}$ supports $\textrm{I}_3$ and $\textrm{I}_9$ singular fibres respectively. 
    \item $\lambda_0^{-1}(\infty)=\{x_0=0\}\cup\{(1-x_0)^3=0\}$. The first component supports $\textrm{IV}^*$-fibres with the second component supporting smooth fibres.
    \item The locus $\lambda_0^{-1}(\frac{1}{3^3})=\{x_0(1-x_0)^3=3^3q_1\}$ supports $\textrm{I}_1$-fibres. 
\end{itemize}

\textbf{Functional invariant $\lambda$:}  
\begin{itemize}
    \item $\lambda^{-1}(0)=\{x_0=\infty\}\cup\{(1-y)=\infty\}\cup\{(y-x_0)^3=\infty\}$, with the first two components supporting $\textrm{I}_3$ fibres and the last component supporting $\textrm{I}_9$-fibres. 
    \item $\lambda^{-1}(\infty)=\{x_0=0\}\cup\{(1-y)=0\}\cup\{(y-x_0)^3=0\}$; the first two component supporting $\textrm{IV}^*$-fibres with the last component supporting smooth fibres. 
    \item The locus $\lambda^{-1}(\frac{1}{3^3})=\{x_0(1-y)(y-x_0)^3=3^3q_1\}$ supports $\textrm{I}_1$-fibres. 
\end{itemize}

The singular loci of $\lambda$ are essentially the union of the singular loci of $\lambda_1,\lambda_2$ with the exception of the removal of one factor of $x_0=0$ and $x_0=\infty$ that is accounted for in $\lambda_0$.

Now we further degenerate $(\mathbb P^3,K3)$ and $(\on{Bl}_{C_{1,4}}Q_4, K3)$. By Proposition \ref{prop-toric-gen-func-inv} and (\ref{func-inv-normal-cone}), the functional invariants $\lambda_1$ and $\lambda_2$ satisfy the following relations:
\[
\lambda_1\lambda_{1,0}=\lambda_{1,1}\lambda_{1,2}
\]
and
\[
\lambda_2\lambda_{2,0}=\lambda_{2,1}\lambda_{2,2},
\]
where $\lambda_{i,j}$ are functional invariants of degenerated pieces (and the intersections of degenerated pieces) with respect to the mirror cubic family; the identities are obtained under appropriate identifications of variables.
Therefore, we have the following relation among functional invariants
\begin{align}\label{identity-func-inv-iterative}
\lambda\lambda_0\lambda_{1,0}\lambda_{2,0}=\lambda_{1,1}\lambda_{1,2}\lambda_{2,1}\lambda_{2,2}.
\end{align}

Now, we would like to show that the period for the mirror quintic can be obtained by gluing relative periods of four rank $2$ LG models.
As computed in \cite{DKY}*{Section 2.2.1}, the period of the mirror quintic can be obtained from the residue integral of the pullback of the period of its internal fibration of mirror quartic $K3$ by the generalized functional invariant $\lambda=\frac{q}{y(1-y)^4}$. Similarly, we can compute the period of the mirror quintic by taking double residue integrals of the pullback of the period of its internal elliptic fibration by the functional invariant $\lambda=\frac{q_1}{x_0(1-y)(y-x_0)^3}$. The computation is as follows. Recall that, the holomorphic period for the mirror cubic curve family is
\[
f_0^{E^\vee}(\tilde \lambda)=\sum \frac{(3d)!}{(d!)^3}\tilde{\lambda}^d.
\]
Then
\begin{align*}
\frac{1}{(1-y)(1-x_0/y)}f_0^{E^\vee}(\lambda)&=\frac{1}{(1-y)(1-x_0/y)}\sum_{d_1\geq 0}\frac{(3d_1)!}{(d_1!)^3}\left(\frac{q_1}{x_0y^3(1-y)(1-x_0/y)^3}\right)^{d_1}\\
&=\frac{1}{(1-x_0/y)}\sum_{d_1,d_{0,1}\geq 0}\frac{(3d_1)!}{(d_1!)^3}\left(\frac{q_1}{x_0y^3(1-x_0/y)^3}\right)^{d_1}\frac{(d_1+d_{0,1})!}{d_1!d_{0,1}!}(y)^{d_{0,1}}\\
&=\sum_{d_1,d_{0,1},d_{0,2}\geq 0}\frac{(3d_1)!}{(d_1!)^3}\left(\frac{q_1}{x_0y^3}\right)^{d_1}\frac{(d_1+d_{0,1})!}{d_1!d_{0,1}!}\frac{(3d_1+d_{0,2})!}{(3d_1)!d_{0,2}!}(y)^{d_{0,1}}\left(\frac{x_0}{y}\right)^{d_{0,2}}.
\end{align*}
We take the residue integrals to obtain the holomorphic period of the mirror of $X=Q_5$:
\begin{align*}
\notag f_0^{X^\vee}(q_1)&=\frac{1}{(2\pi i)^2}\oint\oint \frac{1}{(1-y)(1-x_0/y)}f_0^{E^\vee}(\lambda) \frac{dy}{y}\frac{dx_0}{x_0}\\
\notag &=\sum_{d_1,d_0\geq 0}\frac{(3d_1)!}{(d_1!)^3}\left(q_1\right)^{d_1}\frac{(d_1+4d_{1})!}{d_1!(4d_{1})!}\frac{(3d_1+d_{1})!}{(3d_1)!d_{1}!}\\
&=\sum_{d_1\geq 0}\frac{(5d_1)!}{(d_1!)^5}q_1^{d_1}.
\end{align*}
Similarly, we can compute relative periods for the mirrors of $(X_1,D):=(\mathbb P^3,K3)$, $(X_2,D):=(\on{Bl}_{C_{1,4}}Q_4, K3)$ and the quartic $D:=K3$ as
\[
f_0^{\tilde X_1^\vee}(q_1,y)=\frac{1}{2\pi i}\oint \frac{1}{(1-x_0/y)}f_0^{E^\vee}(\lambda_1) \frac{dx_0}{x_0},
\]
\[
f_0^{\tilde X_2^\vee}(q_1,y)=\frac{1}{2\pi i}\oint \frac{1}{(1-y)(1-x_0)}f_0^{E^\vee}(\lambda_2) \frac{dx_0}{x_0}
\]
\[
f_0^{\tilde D^\vee}(q_1)=\frac{1}{2\pi i}\oint \frac{1}{(1-x_0)}f_0^{E^\vee}(\lambda_0) \frac{dx_0}{x_0}.
\]
We have
\begin{align}\label{identity-iterating-Q-5}
f_0^{X^\vee}(q_1)\star_{q_1}f_0^{\tilde D^\vee}(q_1)=\frac{1}{2\pi i}\oint f_0^{\tilde X_1^\vee}(q_1,y)\star_{q_1}f_0^{\tilde X_2^\vee}(q_1,y)\frac{dy}{y}.
\end{align}
On the other hand, $f_0^{\tilde X_1^\vee}(q_1,y)$ and $f_0^{\tilde X_2^\vee}(q_1,y)$ satisfy the following product relations by Theorem \ref{thm:gluing-period-general} and Equation (\ref{gluing-period-degen-normal-cone}):
\begin{align*}
& f_0^{X^\vee}\star_{q_1}f_0^{\tilde D^\vee}\star_{q_1}f_0^{D_{10}^\vee}\star_{q_1}f_0^{D_{20}^\vee}\\
=& \frac{1}{(2\pi i)^2}\oint \left(\left(\oint f_0^{\tilde X_{11}^\vee}\star_{q_1}f_0^{\tilde X_{12}^\vee}\frac{dy_1}{y_1}\right)\star_{q_1}\left(\oint f_0^{\tilde X_{21}^\vee}\star_{q_1}f_0^{\tilde X_{22}^\vee}\frac{dy_2}{y_2}\right)\right)\frac{dy}{y}.
\end{align*}

These holomorphic periods are solutions to systems of PDEs. This relation between periods is also true for the bases of solutions for the systems of PDEs. In general, such a relation is true when we consider iterating Tyurin degenerations of Calabi-Yau complete intersections in toric varieties. 

In the above example of quintic, the degeneration is induced from a refinement of the nef partition $(5)\to (1)+(4)$. There is another refinement of the nef partition that one can consider: $(5)\to (2)+(3)$. In this case, the quintic threefold is degenerated into a blow-up $\on{Bl}_{3,5} Q_2$ of quadric threefold $Q_2$ along complete intersection of degree $3$ and $5$ hypersurfaces and a cubic threefold $Q_3$. The intersection of $\on{Bl}_{3,5} Q_2$ and $ Q_3$ is a sextic $K3$ surface in $\mathbb P^4$ whose mirror is a family of $M_3$-polarized $K3$ surfaces. The family of $M_3$-polarized $K3$ surfaces also admits an internal fibration structure over $\mathbb P^1$ whose generic fiber is a mirror cubic curve. One can compute the functional invariants of the mirrors of $Q_5$, $(\on{Bl}_{3,5} Q_2,K3)$, $(Q_3,K3)$ and the sextic $K3$ with respect to the mirror cubic family. The computation is similar to the previous example and we have a product relation among functional invariants
\[
\lambda\lambda_0=\lambda_1\lambda_2.
\]
We can further degenerate $(\on{Bl}_{3,5} Q_2,K3)$ and $(Q_3,K3)$. We degenerate $(\on{Bl}_{3,5} Q_2,K3)$  into $(\on{Bl}_{3,5} \mathbb P^3,C)$ and $(\on{Bl}_{1,2}\on{Bl}_{3,5}\mathbb P^3,\tilde C)$ where $C$ is a cubic surface and $\tilde C$ is a blow-up of a cubic surface. We degenerate $(Q_3,K3)$ into $(Q_3,C)$ and $(\mathbb P_{C}(N_C\oplus O_C), \tilde C)$. We then also have the relation for functional invariants of the two-step degenerations which takes the same form as Identity (\ref{identity-func-inv-iterative}).

\begin{remark}
It is natural to ask: viewing $Q_5^\vee$ as a family of elliptic covers over $\mathbb{P}^1\times\mathbb{P}^1$, what are the families of rational curves on the base that support $K3$ surfaces? The $x_0$ and $(y-1)$ slices give rise to families of $M_3$-polarized K3 surfaces and we obtain $M_2$-polarized $K3$ surfaces by considering $(y-x_0)$-slices. 
Are there others? 

The answer is no, and here is one way to see this.
Firstly, any rational curve in $\mathbb{P}^1\times\mathbb{P}^1$ is a curve of type $(1,d)$ or $(d,1)$. 
That is, in an affine chart, it can be represented as the graph of a rational function given by the ratio of two degree $d$-polynomials. 
In turn, by replacing $(x_0,1-y)$ with the corresponding expressions in $\lambda$ above, we obtain the generalized functional invariant of the corresponding (family of) elliptic fibration(s). 
Unless $d=0$ or $d=1$, the degree of the resulting functional invariant will be at least $5$. 
On the other hand, Riemann-Hurwitz and fibre-type considerations readily imply that the maximum degree generalized functional invariant for a genus $0$-cover of the $\lambda$-line that gives rise to a $K3$ surface is $4$.
\end{remark}

The above degenerations of quintic threefold are mirror to the elliptic fibration of the mirror quintic over the $\mathbb P^1\times \mathbb P^1$ base. In general, one can expect the following generalization of the Doran-Harder-Thompson conjecture, which is also mentioned in \cite{BD} and \cite{DT},
\begin{conjecture}\label{conj-higher-dim-base}
If a Calabi-Yau variety $X$ (or a log Calabi-Yau variety $X\setminus D$) admits a semi-stable degeneration, connected to a point of maximal unipotent monodromy, such that the cone over the dual intersection complex of the central fibre is of dimension $d$, then the mirror of $X$ (or $X\setminus D$) admits a Calabi-Yau fibration structure with a $d$-dimensional base. 
\end{conjecture}

There are already some evidence of this conjecture. Given a semi-stable degeneration, one can consider the Clemens-Schmid exact sequence relating the geometry of the central fibre of the degeneration and the geometry of a nearby smooth fibre. On the mirror side, if a smooth variety  admits a projective fibration over a projective base, Doran-Thompson \cite{DT} introduce a four-term long exact sequence relating the cohomology of this variety and the cohomology of the open set obtained removing the preimage of a general linear section. This sequence is considered to be the mirror of the Clemens-Schmid sequence. It requires neither a restriction to Calabi-Yau varieties  nor a restriction to codimension one fibrations. Recently, \cite{Lee24b} also provided some evidence of Conjecture \ref{conj-higher-dim-base} where the base of a Calabi--Yau fibration is given by $\mathbb P^k$. 

\section{Degenerations for simple normal crossings pairs}
In previous sections, we consider the degeneration of the pair $(X,D)$ where $X$ is a quasi-Fano variety and $D$ is a smooth anticanonical divisor of $X$. The story can be generalized to the case when $D$ is a simple normal crossings divisor. Let 
\[
D_1,\ldots, D_n\subset X
\]
be smooth irreducible divisors and
\[
D=D_1+\ldots+D_n\in |-K_X|
\]
be simple normal crossings. We consider a degeneration of $(X,D)$ into
\[
(X_1,D_{11}+D_{12}+\ldots+D_{1n}), \quad  \text{and } (X_2,D_{21}+D_{22}+\ldots+D_{2n}),
\]
and
\[
D_{1i}\cup_{D_{1i}\cap D_0=D_{2i}\cap D_0}D_{2i}
\]
is a degeneration of $D_i$. We consider the higher rank LG models for $(X,D)$, $(X_1,D_0+D_{11}+D_{12}+\ldots+D_{1n})$ and $(X_2,D_0+D_{21}+D_{22}+\ldots+D_{2n})$. The definition of higher rank LG models when $D$ has more than two components is a direct generalization of rank $2$ LG models defined in Definition \ref{defn:hybrid-LG}. As mentioned in Definition \ref{def-hybrid-LG-inductive}, a rank $n$ LG model can be defined inductively in terms of LG models of lower ranks. We also refer to \cite{Lee24} for the definition.

The topological gluing for higher rank LG models is similar to the topological gluing described in Section \ref{sec:top-gluing}. We glue the rank $(n+1)$ LG models of $(X_1,D_0+D_{11}+D_{12}+\ldots+D_{1n})$ and $(X_2,D_0+D_{21}+D_{22}+\ldots+D_{2n})$ along the first component to get the rank $n$ LG models for $(X,D)$.

A natural question to ask is whether their Hodge numbers and Euler characteristics are still related. To answer this question, we would like to ask what is a reasonable definition of Hodge numbers of higher rank LG models? We examine it for the case when $D$ contains two irreducible components. The following discussions can be generalized to the cases when $D$ contains more than two irreducible components.

Let $(X,D_1+D_2)$ be a log Calabi-Yau simple normal crossings pair. Let $(X^\vee,h=(h_1,h_2))$ be the mirror rank $2$ LG models.

\begin{defn}\label{def-hodge-hybrid}
We define the Hodge numbers of rank $2$ LG models as
\[
h^{p,q}(X^\vee,(h_1,h_2)):=h^{p,q}(X^\vee, h_1^{-1}(t_1)\cup h_2^{-1}(t_2)),
\]
where $t=(t_1,t_2)$ is a regular value of $(h_1,h_2)$.
\end{defn}

\begin{conjecture}
If the rank $2$ LG model $(X^\vee,h)$ is mirror to $(X,D_1+D_2)$, then
\[
h^{d-q,p}(X)=h^{p,q}(X^\vee,(h_1,h_2)).
\]
\end{conjecture}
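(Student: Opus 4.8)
The plan is to upgrade the topological identity of Theorem \ref{thm:topo-gluing-2} to a Hodge-theoretic statement by equipping the relative cohomology groups $H^k(X^\vee, h_1^{-1}(t_1)\cup h_2^{-1}(t_2);\mathbb C)$ with a canonical mixed Hodge structure and then tracking its graded pieces through the fibration structure of Definition \ref{def-hybrid-LG-inductive}. First I would construct the mixed Hodge structure following the methods of \cite{PS}: choose a compactification of $X^\vee$ compatible with the two superpotentials $h_1,h_2$ so that the pair $(X^\vee, h_1^{-1}(t_1)\cup h_2^{-1}(t_2))$ is modelled by a normal-crossings configuration, and take the limiting mixed Hodge structure as $(t_1,t_2)$ approaches the regular value from the interior. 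This realizes the numbers $h^{p,q}(X^\vee,(h_1,h_2))$ of Definition \ref{def-hodge-hybrid} as honest dimensions of graded pieces refining the Euler number already computed in Theorem \ref{thm:topo-gluing-2}. Here $n=\dim_{\mathbb C}X$, so that $h^{n-q,p}(X)$ is the mirror-rotated Hodge number one expects to see.

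Second, I would exploit the inductive structure of the rank $2$ LG model. A generic fibre $D_1^\vee$ of $h_1$, together with $h_2|_{D_1^\vee}$, is a rank $1$ LG model mirror to $(D_1,D_{12})$, and symmetrically $D_2^\vee$ is mirror to $(D_2,D_{12})$; the overlap $h_1^{-1}(t_1)\cap h_2^{-1}(t_2)$ is topologically the Calabi--Yau mirror to $D_{12}$. Running the Mayer--Vietoris/Gysin sequence of the decomposition $h_1^{-1}(t_1)\cup h_2^{-1}(t_2)$ as a morphism of mixed Hodge structures, I would express each $h^{p,q}(X^\vee,(h_1,h_2))$ in terms of the rank $1$ LG Hodge numbers of the two pieces together with a correction from the overlap. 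On the generic fibres the rank $1$ Hodge-number mirror statement (the form of \cite{KKP} used in \cite{DHT}, taken as known input for the smooth pairs $(D_i,D_{12})$) identifies these numbers with the Hodge numbers of $(D_1,D_{12})$ and $(D_2,D_{12})$, again rotated in the mirror fashion.

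Third, on the $X$ side I would run the parallel residue/Gysin computation for the simple normal crossings divisor $D_1+D_2\subset X$, writing $h^{n-q,p}(X)$ in terms of the $h^{\bullet,\bullet}$ of $X$, $D_1$, $D_2$ and $D_{12}$, and then match these with the LG side term by term using the quasi-Fano mirror relations for the smooth pieces. The bookkeeping is forced to be consistent at the level of alternating sums by Theorem \ref{thm:topo-gluing-2}; the content of the conjecture is precisely that the matching holds \emph{before} summing, bidegree by bidegree.

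The hard part will be controlling the overlap contribution at the graded Hodge level rather than merely in Euler characteristic. In Theorem \ref{thm:topo-gluing} the overlap is annihilated because it fibres over an annulus and therefore has vanishing Euler number, but its individual Hodge--Deligne numbers $h^{p,q}$ need not vanish; instead one must show that they cancel against the monodromy-invariant and coinvariant parts contributed by the gluing. Concretely, the matching condition $\phi_{11}=\phi_{21}^{-1}$ on the monodromy symplectomorphisms around infinity must be promoted to an isomorphism of limiting mixed Hodge structures, so that the Wang sequence of \cite{PS}*{Theorem 11.33} becomes exact in the category of Hodge structures and the invariant/coinvariant subquotients on the two sides are identified in the correct bidegrees. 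Establishing that the monodromy weight filtration is strictly compatible with the gluing — equivalently, that no spurious weight shift is introduced at the overlap — is the technical crux, and is exactly the step that the purely topological argument is able to sidestep.
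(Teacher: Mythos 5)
The statement you are addressing is stated in the paper as a conjecture and is left unproven there: the only supporting evidence the paper offers is the Euler-characteristic identity $\chi(X)=(-1)^{\mathrm d}\chi(X^\vee,D_1^\vee\cup D_2^\vee)$, derived from the (themselves conjectural) mirror relations for $X\setminus(D_1\cup D_2)$, for the pairs $(D_i,D_{12})$, and for $D_{12}$. Your proposal does not close this gap. You candidly flag the decisive step yourself: showing that the overlap $h_1^{-1}(t_1)\cap h_2^{-1}(t_2)$, whose Euler number vanishes because it fibres over an annulus, contributes nothing spurious bidegree by bidegree --- i.e.\ that the Wang and Mayer--Vietoris sequences are strict for the weight and Hodge filtrations and that the monodromy identification can be promoted to an isomorphism of limiting mixed Hodge structures. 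That is not a technical detail to be deferred; it is the entire content of the refinement from Euler numbers to Hodge numbers, and nothing in your outline indicates how to establish it.

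Two further points. First, your second step takes as ``known input'' the rank $1$ Hodge-number mirror statement for the smooth pairs $(D_i,D_{12})$, in the style of \cite{KKP}; that statement is itself conjectural except in special cases, so even a complete execution of your plan would yield only a conditional result. Second, the conjecture as stated concerns an arbitrary rank $2$ LG mirror of $(X,D_1+D_2)$, with no degeneration in sight; Theorem \ref{thm:topo-gluing-2} and the monodromy matching condition $\phi_{11}=\phi_{21}^{-1}$ belong to the gluing construction of Section \ref{sec:top-gluing} and are not hypotheses here. The relevant consistency check is the one the paper performs directly after Definition \ref{def-hodge-hybrid}, using only the decomposition of $X$ by $D_1\cup D_2$ and the decomposition of the relative cohomology of $(X^\vee,D_1^\vee\cup D_2^\vee)$ by the two fibres and their intersection; that computation is the alternating-sum shadow of the statement, and your second and third steps essentially reproduce it. What remains --- and what neither you nor the paper supplies --- is the bigraded refinement.
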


Here are some reasons for the definition and the conjecture. Let $D_1^\vee=h_1^{-1}(t_1)$ and $D_2^\vee=h^{-1}(t_2)$. Note that $X\setminus (D_1\cup_{D_1\cap D_2} D_2)$ is mirror to $X^\vee$. Therefore, we expect to have the following relation between Euler characteristics under mirror symmetry:
\begin{align}\label{chi-X-D}
\chi(X\setminus (D_1\cup_{D_1\cap D_2} D_2))=(-1)^{\mathrm d}\chi(X^\vee).
\end{align}
Furthermore, by assumption, $(D_1,D_1\cap D_2)$ is mirror to $(D_1^\vee,h_2)$, where $D_1^\vee:=h_1^{-1}(t_1)$. Similarly, $(D_2,D_1\cap D_2)$ is mirror to $(D_2^\vee,h_1)$, where $D_2^\vee:=h_2^{-1}(t_2)$. Then, we have the following relation
\begin{align}\label{chi-D-1-D-2}
\chi(D_1)=(-1)^{\mathrm d-1}\chi(D_1^\vee,h_2^{-1}(t_2)), \quad \text{and } \chi(D_2)=(-1)^{\mathrm d-1}\chi(D_2^\vee,h_1^{-1}(t_1)).
\end{align}
Since $h^{-1}(t):=h_1^{-1}(t_1)\cap h_2^{-1}(t_2)$ is mirror to $D_1\cap D_2$, we have
\begin{align}\label{chi-D-12}
\chi(D_1\cap D_2)=(-1)^{\mathrm d-2}\chi(h^{-1}(t)).
\end{align}
Combining (\ref{chi-X-D}), (\ref{chi-D-1-D-2}) and (\ref{chi-D-12}) together, we have

\begin{align*}
    \chi(X)&=\chi(X\setminus (D_1\cup_{D_1\cap D_2} D_2))+\chi(D_1\cup_{D_1\cap D_2} D_2)\\
    &=(-1)^{\mathrm d}\chi(X^\vee)+\chi(D_1)+\chi(D_2)-\chi(D_1\cap D_2)\\
    &=(-1)^{\mathrm d}\chi(X^\vee)+(-1)^{\mathrm d-1}\chi(D_1^\vee,h_2^{-1}(t_2))+(-1)^{\mathrm d-1}\chi(D_2^\vee,h_1^{-1}(t_1))-(-1)^{\mathrm d-2}\chi(h^{-1}(t))\\
    &=(-1)^{\mathrm d}\chi(X^\vee,h^{-1}(t))+(-1)^{\mathrm d-1}\chi(D_1^\vee,h_2^{-1}(t_2))+(-1)^{\mathrm d-1}\chi(D_2^\vee,h_1^{-1}(t_1))\\
        &=(-1)^{\mathrm d}\left(\chi(X^\vee,h^{-1}(t))-\chi(D_1^\vee,h_2^{-1}(t_2))-\chi(D_2^\vee,h_1^{-1}(t_1))\right)\\
        &=(-1)^{\mathrm d}\left(\chi(X^\vee,D_1^\vee\cup D_2^\vee)\right).
\end{align*}
This is the expected relation between Euler characteristics of $X$ and $(X^\vee,h)$.

Recall that $W:=h_1+h_2:X^\vee\rightarrow \mathbb C$ is a non-proper LG model. Naturally, one can ask what is the relation between Hodge numbers of rank $2$ LG models and Hodge numbers of proper LG model $(\bar{X^\vee},\bar{W})$ which is mirror to $(X,D)$ when $D$ is a smooth anticanonical divisor of $X$? We have the following relation between their Euler characteristics. The relation between their Hodge numbers may be studied elsewhere. 

\begin{theorem}
Given a quasi-Fano variety $X$ with a smooth anticanonical divisor $D\in |-K_X|$. Let $(\bar{X^\vee},\bar{W})$ be the LG mirror of $(X,D)$ and $(X^\vee,h)$ be the rank $2$ LG mirror of $(X,D_1+D_2)$, where $D_1+D_2\in |-K_X|$ is simple normal crossing, then
\[
\chi(\bar{X^\vee},\bar{W}^{-1}(t))=\chi(X^\vee,h_1^{-1}(t_1)\cup h_2^{-1}(t_2)).
\]
\end{theorem}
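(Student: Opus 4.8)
The plan is to recognize that both sides of the asserted identity are, up to the common sign $(-1)^{\mathrm d}$, two distinct mirror-symmetric expressions for the single topological Euler number $\chi(X)$; the identity then follows by transitivity once both expressions are in hand.

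First I would apply Theorem \ref{thm:topo-gluing} to the smooth pair $(X,D)$ and its ordinary LG mirror $(\bar{X^\vee},\bar{W})$, which gives
\[
\chi(X)=(-1)^{\mathrm d}\chi(\bar{X^\vee},\bar{W}^{-1}(t)).
\]
This pins down the left-hand side of the theorem in terms of $\chi(X)$. Next I would reuse the chain of Euler-characteristic identities carried out immediately before the theorem statement, which evaluates $\chi(X)$ starting from the rank $2$ LG mirror $(X^\vee,h)$ of the simple normal crossings pair $(X,D_1+D_2)$. Writing $D_1^\vee=h_1^{-1}(t_1)$ and $D_2^\vee=h_2^{-1}(t_2)$, the inclusion-exclusion decomposition
\[
\chi(X)=\chi\bigl(X\setminus(D_1\cup_{D_1\cap D_2}D_2)\bigr)+\chi(D_1)+\chi(D_2)-\chi(D_1\cap D_2),
\]
combined with the mirror relations (\ref{chi-X-D}), (\ref{chi-D-1-D-2}) and (\ref{chi-D-12}), yields
\[
\chi(X)=(-1)^{\mathrm d}\chi\bigl(X^\vee,h_1^{-1}(t_1)\cup h_2^{-1}(t_2)\bigr),
\]
which pins down the right-hand side of the theorem.

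Finally, equating the two expressions for $\chi(X)$ and cancelling the common factor $(-1)^{\mathrm d}$ delivers exactly
\[
\chi(\bar{X^\vee},\bar{W}^{-1}(t))=\chi(X^\vee,h_1^{-1}(t_1)\cup h_2^{-1}(t_2)),
\]
as claimed. No new computation is required beyond assembling the two prior derivations.

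The conceptual heart of the statement — and the only point requiring genuine care — is that these two evaluations of $\chi(X)$ rest on different mirror inputs: the first views $X$ together with its smooth anticanonical divisor $D$ and invokes the ordinary LG model, whereas the second views $X$ together with the simple normal crossings anticanonical divisor $D_1+D_2$ and invokes the rank $2$ LG model. The theorem is precisely the assertion that these two viewpoints produce the same relative Euler number for the respective (ordinary versus rank $2$) Landau-Ginzburg models. Thus the substantive work is to confirm that both mirror predictions genuinely concern the \emph{same} smooth variety $X$, with compatible choices of regular values $t$ and $(t_1,t_2)$, so that the two right-hand sides may legitimately be identified; once this compatibility is established the equality is automatic.
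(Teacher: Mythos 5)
Your reduction of the identity to ``both sides equal $(-1)^{\mathrm d}\chi(X)$'' has a genuine gap in each of its two inputs, and the gap is precisely where the content of the theorem lives. For the left-hand side you invoke Theorem \ref{thm:topo-gluing}, but that theorem is not a statement about an arbitrary ``LG mirror of $(X,D)$'': its hypotheses require $X$ to be presented as a smoothing of $X_1\cup_{D_0}X_2$ and the LG model to be the specific object obtained by gluing the two rank $2$ LG models of the pieces. To bring $(\bar{X^\vee},\bar{W})$ within its scope you must first produce such a degeneration and then invoke Conjecture \ref{conj-gluing-1} to identify $\bar{X^\vee}$ with the glued object. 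The paper does exactly this, taking the degeneration to the normal cone of $X$ along $D_2$,
\[
(X,D)\leadsto (X,D_1)\cup_{(D_2,D_1\cap D_2)}(\mathbb P(N_{D_2}\oplus \mathcal O),\tilde D_2),
\]
and this choice is not incidental: it makes one of the two glued pieces literally equal to the rank $2$ mirror $(X^\vee,h)$ of $(X,D_1+D_2)$, which is the only mechanism in the paper relating $\bar{X^\vee}$ to $X^\vee$ at all. Your proposal never supplies this bridge; your closing paragraph correctly identifies that ``the substantive work is to confirm that both mirror predictions genuinely concern the same $X$,'' but the actual substantive work is to relate the two \emph{mirrors} geometrically, and that is what is missing.

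For the right-hand side you cite the chain of identities preceding the theorem, but that computation is explicitly heuristic --- it rests on the \emph{expected} mirror relations (\ref{chi-X-D}), (\ref{chi-D-1-D-2}) and (\ref{chi-D-12}) and is offered as motivation for Definition \ref{def-hodge-hybrid}, not as a proved statement. If one grants both $\chi(X)=(-1)^{\mathrm d}\chi(\bar{X^\vee},\bar{W}^{-1}(t))$ and $\chi(X)=(-1)^{\mathrm d}\chi(X^\vee,h_1^{-1}(t_1)\cup h_2^{-1}(t_2))$ as axioms, the equality follows trivially but carries no information: the theorem is exactly the consistency check that the relative Euler characteristic of Definition \ref{def-hodge-hybrid} matches the proper LG model one, so assuming both mirror predictions begs the question. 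The paper instead derives the identity from the gluing structure: the relative Mayer--Vietoris sequence gives $\chi(\bar{X^\vee},\bar{W}^{-1}(t))=\chi(X^\vee,h_1^{-1}(t_1))+\chi(P^\vee,h_{p,1}^{-1}(t_2))$, the second term is evaluated to $-\chi(h_2^{-1}(t_2),h^{-1}(t))$ using Lee's smoothing formula $\chi(\mathbb P(N_{D_2}\oplus \mathcal O))=2\chi(D_2)$ together with mirror relations only for the small pieces, and additivity of relative Euler characteristics assembles the result. To repair your argument you would need, at minimum, to introduce the normal cone degeneration and the resulting identification of $\bar{X^\vee}$ as a compactification of $X^\vee$ obtained by gluing in $P^\vee$.
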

\begin{proof}
We can consider the degeneration to the normal cone of $X$ with respect to $D_2$. Then $(X,D)$ degenerates to
\[
(X,D_1)\cup_{(D_2,D_1\cap D_2)}(\mathbb P(N_{D_2}\oplus \mathcal O),\tilde D_2).
\]
By Conjecture \ref{conj-gluing-1}, the LG model $(\bar{X^\vee},\bar{W})$ is obtained by gluing two rank $2$ LG models $(X,h)$ and $(P^\vee,h_p)$, where $(P^\vee,h_p)$ is the rank $2$ LG model mirrors to $(\mathbb P(N_{D_2}\oplus \mathcal O),\tilde D_2+D_2)$. By the relative Mayer-Vietoris sequence (\ref{rel-mv-seq}), we have
\[
\chi(\bar{X^\vee},\bar{W}^{-1}(t))=\chi(X^\vee,h_1^{-1}(t_1))+\chi(P^\vee,h_{p,1}^{-1}(t_2)).
\]
Note that 
\[
\chi(X^\vee,h_1^{-1}(t_1))=\chi(X^\vee,h^{-1}(t))-\chi(h_1^{-1}(t_1),h^{-1}(t)).
\]
On the other hand, since $X\cup_{D_2} \mathbb P(N_{D_2}\oplus \mathcal O)$ is smoothable to $X$, by \cite{Lee06}*{Proposition IV. 6}, we have
\[
\chi(X)=\chi(X)+\chi(\mathbb P(N_{D_2}\oplus \mathcal O)-2\chi(D_2).
\]
Hence $\chi(\mathbb P(N_{D_2}\oplus \mathcal O)=2\chi(D_2)$. Recall that $(P^\vee,h_p)$ is the rank $2$ LG model mirrors to $(\mathbb P(N_{D_2}\oplus \mathcal O),\tilde D_2+D_2)$. Therefore, $(P^\vee,h_{p,1})$ is mirror to the complement of the divisor $D_2$ in $\mathbb P(N_{D_2}\oplus \mathcal O)$. Hence,
\begin{align*}
\chi(P^\vee,h_{p,1}^{-1}(t_2))&=(-1)^{\mathrm d}\chi(\mathbb P(N_{D_2}\oplus \mathcal O)\setminus D_2)\\
&=(-1)^{\mathrm d}\left(\chi(\mathbb P(N_{D_2}\oplus \mathcal O))-\chi(D_2)\right)\\
&=(-1)^{\mathrm d}\chi(D_2)\\
&=-\chi(h_2^{-1}(t_2),h^{-1}(t)).
\end{align*}
Finally, this gives
\begin{align*}
&\chi(\bar{X^\vee},\bar{W}^{-1}(t))\\
=&\chi(X^\vee,h_1^{-1}(t_1))+\chi(P^\vee,h_{p,1}^{-1}(t_2))\\
=&\chi(X^\vee,h^{-1}(t))-\chi(h_1^{-1}(t_1),h^{-1}(t))-\chi(h_2^{-1}(t_2),h^{-1}(t))\\
=&\chi(X^\vee,h_1^{-1}(t_1)\cup h_2^{-1}(t_2)).
\end{align*}
\end{proof}
This is exactly what we expected. Therefore, our definition is compatible with the mirror duality between Euler  characteristics when there is a proper LG model.

Now we return to the degeneration of simple normal crossing pairs. We consider the case when $D$ has two irreducible components. The case when $D$ has more than two irreducible components works similarly. We consider the degeneration of $(X,D_1+D_2)$ into $$(X_1,D_{11}+D_{12})\cup_{(D_0,D_0\cap D_{11}+D_0\cap D_{12})} (X_2,D_{21}+D_{22}).$$
The gluing picture of Section \ref{sec:gluing} can be generalized to this case directly. Furthermore, the following relation between Euler characteristics still holds.

\begin{theorem}\label{thm:topo-gluing-2}
Let $X_1$ and $X_2$ be $\mathrm d$-dimensional quasi-Fano varieties which contain the same quasi-Fano hypersurface $D_0$, such that 
\[
K_{X_1}|_{D_0}=-K_{X_2}|_{D_0}.
\]
Let $D_0+D_{11}+D_{12}\in|-K_{X_1}|$ and $D_0+D_{21}+D_{22}\in |-K_{X_2}|$ such that $D_{11}\cap D_0=D_{21}\cap D_0$, $D_{12}\cap D_0=D_{22}\cap D_0$ and
\[
K_{D_{11}}|_{D_0\cap D_{11}}=-K_{D_{21}}|_{D_{0}\cap D_{21}}, \quad K_{D_{12}}|_{D_0\cap D_{12}}=-K_{D_{22}}|_{D_{0}\cap D_{22}},
\]
and
\[
K_{D_{11}\cap D_{12}}|_{D_0\cap D_{11}\cap D_{12}}=-K_{D_{21}\cap D_{22}}|_{D_0\cap D_{21}\cap D_{22}}.
\]
Let $(X_1^\vee,(h_{10},h_{11},h_{12}))$ and $(X_2^\vee,(h_{20},h_{21},h_{22}))$ be the rank $3$ LG models of $(X_{1},D_0+D_{11}+D_{12}\in|-K_{X_1}|)$ and $(X_2,D_0+D_{21}+D_{22}\in |-K_{X_2}|)$. Suppose that the fibres of $(h_{10},h_{11},h_{12})$ and $(h_{20},h_{21},h_{22})$ are topologically the same Calabi-Yau manifold, which is topologically mirror to $D_{012}=D_0\cap D_{11}\cap D_{12}=D_0\cap D_{21}\cap D_{22}$. Let $X$ be a quasi-Fano variety obtained from $X_1\cup_{D_0}X_2$ by smoothing and let $D_1\subset X$ be a quasi-Fano variety obtained from $D_{11}\cup_{D_{012}} D_{21}$ by smoothing and $D_2\subset X$ be a quasi-Fano variety obtained from $D_{12}\cup_{D_{012}} D_{22}$ by smoothing. 
Let $(X^\vee,h=(h_1,h_2))$ be the rank $2$ LG model obtained by gluing rank $3$ LG models $(X_1^\vee,(h_{10},h_{11},h_{12}))$ and $(X_2^\vee,(h_{20},h_{21},h_{22}))$ along the first factor. Then
\[
\chi(X)=(-1)^{\mathrm d}(\chi(X^\vee,h_1^{-1}(t_1)\cup h_2^{-1}(t_2))), 
\]
\[
\chi(D_1)=(-1)^{\mathrm d-1}\chi(h_1^{-1}(t_1),h_2^{-1}(t_2)),
\]
\[
\chi(D_2)=(-1)^{\mathrm d-1}\chi(h_2^{-1}(t_2),h_1^{-1}(t_1)),
\]
and
\[
\chi(D_{1}\cap D_2)=(-1)^{\mathrm d-2}\chi(h^{-1}(t))
\]
for $t=(t_1,t_2)$ a regular value of $h$, where $\chi$ is the Euler number.
\end{theorem}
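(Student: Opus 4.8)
The plan is to run the argument of Theorem \ref{thm:topo-gluing} once for each of the four strata $X$, $D_1$, $D_2$ and $D_1\cap D_2$, using the elaborate hypotheses on canonical classes precisely to license these repeated applications. Write $D_j^\vee := h_j^{-1}(t_j)$ for $j=1,2$ and $h^{-1}(t)=D_1^\vee\cap D_2^\vee$. Recall from the construction that gluing along the first factor makes $X^\vee = X_1^\vee\cup X_2^\vee$, that $h_j|_{X_i^\vee} = h_{ij}$ for $j=1,2$, and that the overlap $X_1^\vee\cap X_2^\vee$ --- together with each of the relative loci obtained by intersecting it with $D_1^\vee$, $D_2^\vee$, or $D_1^\vee\cup D_2^\vee$ --- is a fibration over an annulus in the first-factor direction. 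By the Wang sequence \cite{PS}*{Theorem 11.33}, every such overlap contributes Euler characteristic zero, exactly as in the proof of Theorem \ref{thm:topo-gluing}.

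For the top relation I would write the relative Mayer--Vietoris sequence for the pair $(X^\vee, D_1^\vee\cup D_2^\vee)$ under the decomposition $X^\vee = X_1^\vee\cup X_2^\vee$; the overlap vanishing gives the additivity
\[
\chi(X^\vee, D_1^\vee\cup D_2^\vee) = \chi(X_1^\vee, h_{11}^{-1}(t_1)\cup h_{12}^{-1}(t_2)) + \chi(X_2^\vee, h_{21}^{-1}(t_1)\cup h_{22}^{-1}(t_2)).
\]
Forgetting the glued first factor $h_{i0}$, the pair $(X_i^\vee, (h_{i1},h_{i2}))$ is the rank $2$ LG model mirror to the log Calabi--Yau $(X_i\setminus D_0, D_{i1}+D_{i2})$, so the rank $2$ Euler-characteristic computation following Definition \ref{def-hodge-hybrid}, applied to $X_i\setminus D_0$, gives $\chi(X_i^\vee, h_{i1}^{-1}(t_1)\cup h_{i2}^{-1}(t_2)) = (-1)^{\mathrm d}\chi(X_i\setminus D_0) = (-1)^{\mathrm d}(\chi(X_i)-\chi(D_0))$. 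Summing and invoking the smoothing formula $\chi(X)=\chi(X_1)+\chi(X_2)-2\chi(D_0)$ of \cite{Lee06}*{Proposition IV. 6} yields $\chi(X) = (-1)^{\mathrm d}\chi(X^\vee, D_1^\vee\cup D_2^\vee)$, which is the first claim.

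The three deeper relations reduce to already-established gluing statements applied fibrewise. The generic fibre $D_1^\vee = h_1^{-1}(t_1)$, equipped with $h_2|_{D_1^\vee}$, is itself obtained by gluing the rank $2$ LG models $h_{i1}^{-1}(t_1)$, carrying $(h_{i0},h_{i2})|$, along their first factor $h_{i0}$, while $D_1$ is smoothed from $D_{11}\cup_{D_{012}}D_{21}$; the canonical-class hypotheses $K_{D_{11}}|_{D_0\cap D_{11}} = -K_{D_{21}}|_{D_0\cap D_{21}}$ and $K_{D_{11}\cap D_{12}}|_{D_{012}} = -K_{D_{21}\cap D_{22}}|_{D_{012}}$ are exactly the inputs needed to apply Theorem \ref{thm:topo-gluing} in dimension $\mathrm d-1$. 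This gives $\chi(D_1) = (-1)^{\mathrm d-1}\chi(D_1^\vee, h^{-1}(t)) = (-1)^{\mathrm d-1}\chi(h_1^{-1}(t_1), h_2^{-1}(t_2))$, and symmetrically for $D_2$. The deepest relation, $\chi(D_1\cap D_2) = (-1)^{\mathrm d-2}\chi(h^{-1}(t))$, is the original Doran--Harder--Thompson Euler-number identity \cite{DHT}*{Theorem 2.3} for the Tyurin degeneration $D_1\cap D_2\leadsto (D_{11}\cap D_{12})\cup_{D_{012}}(D_{21}\cap D_{22})$, whose mirror Calabi--Yau is the common fibre $h^{-1}(t)$.

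The main obstacle is the complement identification in the top relation: showing that discarding the first factor $h_{i0}$ genuinely produces the rank $2$ mirror of the non-compact pair $(X_i\setminus D_0, D_{i1}+D_{i2})$ and that the rank $2$ relative Euler-characteristic identity survives in this log Calabi--Yau setting. This is the rank $2$ analogue of Equation (\ref{chi-X-i}), and it is where the lower-rank version of the identity feeds back in; once it is in place, the remaining steps are the same Mayer--Vietoris, Wang-vanishing, and smoothing bookkeeping as in Theorem \ref{thm:topo-gluing}.
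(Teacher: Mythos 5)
Your proposal is correct and follows the paper's proof in all essentials: the last three identities are quoted from Theorem \ref{thm:topo-gluing} and \cite{DHT}*{Theorem 2.3}, and the first is obtained from a relative Mayer--Vietoris sequence with Wang-sequence vanishing on the annular overlap, the mirror-of-the-complement identification, and the smoothing formula of \cite{Lee06}*{Proposition IV. 6}. The only difference is organizational: the paper applies Mayer--Vietoris to the pair $(X^\vee,(h_1,h_2)^{-1}(t))$, expands each summand as in (\ref{chi-X-1}) and (\ref{chi-X-2}), and therefore also needs the smoothing formulas (\ref{chi-D-1}) and (\ref{chi-D-2}) for $D_1$ and $D_2$ to arrive at $\chi(X^\vee,(h_1,h_2)^{-1}(t))=(-1)^{\mathrm d}(\chi(X)-\chi(D_1)-\chi(D_2))$, whereas you work directly with the pair $(X^\vee,h_1^{-1}(t_1)\cup h_2^{-1}(t_2))$, where the $\chi(D_{ij})$ and $\chi(D_0\cap D_{ij})$ contributions cancel and only Lee's formula for $X$ is needed; both versions rest on exactly the same mirror-of-the-complement inputs (the rank $2$ analogue of (\ref{chi-X-i})), which you correctly identify as the real content rather than the bookkeeping.
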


\begin{proof}
The last three identities follow from Theorem \ref{thm:topo-gluing} and \cite{DHT}*{Theorem 2.3}. We compute the RHS of the first identity. 

By relative Mayer-Vietoris sequence
\begin{align*}
\cdots \rightarrow H^j(X^\vee,(h_1,h_2)^{-1}(t);\mathbb C)\rightarrow H^j(X_1^\vee,(h_{11},h_{12})^{-1}(t_1);\mathbb C)\oplus H^j(X_2^\vee,(h_{21},h_{22})^{-1}(t_2);\mathbb C) \\
\notag \rightarrow H^j(X_1^\vee\cap X_2^\vee, (h_{11},h_{12})^{-1}(t_1)\cap (h_{21},h_{22})^{-1}(t_2);\mathbb C)\rightarrow \cdots,
\end{align*}
similar to the proof of Theorem \ref{thm:topo-gluing}, we have
\begin{align}\label{chi-X-X-1-X-2}
\chi(X^\vee,(h_1,h_2)^{-1}(t))=\chi(X_1^\vee,(h_{11},h_{12})^{-1}(t_1))+\chi(X_2^\vee,(h_{21},h_{22})^{-1}(t_2)).
\end{align}
We can write
\begin{align}\label{chi-X-1}
\chi(X_1^\vee,(h_{11},h_{12})^{-1}(t_1))&=\chi(X_1^\vee,h_{11}^{-1}(t_{11}))+\chi(h_{11}^{-1}(t_{11}),(h_{11},h_{12})^{-1}(t_1))\\
\notag &=(-1)^{\mathrm d}\chi(X_1\setminus (D_0+D_{12}))+(-1)^{\mathrm d-1}\chi(D_{11}\setminus (D_0\cap D_{11}))\\
\notag &=(-1)^{\mathrm d}\left(\chi(X_1)-\chi(D_0)-\chi(D_{12})+\chi(D_0\cap D_{12})-\chi(D_{11})+\chi(D_{0}\cap D_{11})\right).
\end{align}
Similarly,
\begin{align}\label{chi-X-2}
\chi(X_2^\vee,(h_{21},h_{22})^{-1}(t_2))=(-1)^{\mathrm d}\left(\chi(X_2)-\chi(D_0)-\chi(D_{22})+\chi(D_0\cap D_{22})-\chi(D_{21})+\chi(D_{0}\cap D_{21})\right).
\end{align}
Recall that $X_1\cup_{D_0}X_2$ is smoothable to $X$, $D_{11}\cup_{D_0\cap D_{11}=D_0\cap D_{21}}D_{12}$ is smoothable to $D_1$ and $D_{12}\cup _{D_0\cap D_{12}=D_0\cap D_{22}}D_{22}$ is smoothable to $D_2$. Therefore, by \cite{Lee06}*{Proposition IV. 6}, we have
\begin{align}\label{chi-X}
\chi(X)=\chi(X_1)+\chi(X_2)-2\chi(D_0),
\end{align}
\begin{align}\label{chi-D-1}
\chi(D_1)=\chi(D_{11})+\chi(D_{21})-2\chi(D_{0}\cap D_{11}),
\end{align}
and
\begin{align}\label{chi-D-2}
\chi(D_2)=\chi(D_{12})+\chi(D_{22})-2\chi(D_{0}\cap D_{12}).
\end{align}
Hence,
\begin{align*}
    \chi(X^\vee,(h_1,h_2)^{-1}(t))&=\chi(X_1^\vee,(h_{11},h_{12})^{-1}(t_1))+\chi(X_2^\vee,(h_{21},h_{22})^{-1}(t_2))\\
    &=(-1)^{\mathrm d}\left(\chi(X_1)-\chi(D_0)-\chi(D_{12})+\chi(D_0\cap D_{12})-\chi(D_{11})+\chi(D_{0}\cap D_{11})\right)\\
    &\quad +(-1)^{\mathrm d}\left(\chi(X_2)-\chi(D_0)-\chi(D_{22})+\chi(D_0\cap D_{22})-\chi(D_{21})+\chi(D_{0}\cap D_{21})\right)\\
    &=(-1)^{\mathrm d}\left(\chi(X)-\chi(D_1)-\chi(D_2)\right),
\end{align*}
where the first line is (\ref{chi-X-X-1-X-2}); the second line follows from (\ref{chi-X-1}) and (\ref{chi-X-2}); the last line follows from (\ref{chi-X}), (\ref{chi-D-1}) and (\ref{chi-D-2}). This proves the first equality.
\end{proof}

For toric complete intersections, the same computation in Section \ref{sec:toric} also applies to the degeneration of simple normal crossing pairs. We again have the product relation among generalized functional invariants and the Hadamard product relation among relative periods and bases of solutions of the corresponding PDEs. 

\section{Gromov-Witten invariants}\label{sec:GW}

Classical mirror symmetry relates absolute Gromov-Witten invariants with periods (see, for example, \cite{Givental98}, \cite{LLY}). Recently, relative mirror symmetry has been proved in \cite{FTY} to relate relative Gromov-Witten invariants of a smooth pair with relative periods of its mirror. In \cite{DKY}, we explain that the relation between periods gives a relation between absolute and relative Gromov-Witten invariants of a Tyurin degeneration of a Calabi-Yau variety. In this paper, we consider gluing of rank $(n+1)$ LG models to an rank $n$ LG model and gluing relative periods of rank $(n+1)$ LG models to a relative period of a rank $n$ LG model. Higher rank LG models are mirror to simple normal crossings pairs. It is natural to ask if relative periods of higher rank LG models also mirror to certain A-model invariants. In this paper, we consider the type of invariants associated to a simple normal crossing pairs defined in \cite{TY20c} which fits well into our context. In \cite{TY20b}, a mirror theorem has been proved to relate the formal Gromov-Witten invariants of infinite root stacks, which are invariants associated to simple normal crossing pairs and defined in \cite{TY20c}, and periods.   

\begin{remark}
Another well-known invariant for simple normal crossing pairs is (punctured) logarithmic Gromov-Witten invariants of \cite{AC}, \cite{Chen14}, \cite{GS}, \cite{ACGS}. Punctured invariants are essential for the intrinsic mirror symmetry construction in the Gross-Siebert program \cite{GS19}. An approach of using the Gross-Siebert program to prove the Doran-Harder-Thompson conjecture has appeared in \cite{BD}. It would be interesting to relate the period calculation in this paper and in \cite{DKY} to the approach via the Gross-Siebert program. It will be studied elsewhere. 
\end{remark}

We will briefly review the formal Gromov-Witten theory of infinite root stacks and its mirror theorem.

\subsection{Orbifold Gromov-Witten invariants}
Let $\mathcal X$ be a smooth proper Deligne-Mumford stack such that its coarse moduli space $X$ is projective. We consider the moduli space $\bM_{0,l}(\mathcal X, \beta)$ of $l$-pointed genus zero degree $\beta\in H_2(X)$ stable maps to $\mathcal X$.

The genus-zero orbifold Gromov-Witten invariants of $\mathcal X$ are defined as follows
\begin{align}
\left\langle \prod_{i=1}^l \tau_{a_i}(\gamma_i)\right\rangle_{0,l,\beta}^{\mathcal X}:=\int_{[\bM_{0,l}(\mathcal X, \beta)]^{\on{vir}}}\prod_{i=1}^l(\on{ev}^*_i\gamma_i)\bar{\psi}_i^{a_i},
\end{align}
where, 
\begin{itemize}
\item $[\bM_{0,l}(\mathcal X, \beta)]^{\on{vir}}$ is the virtual fundamental class. 
\item 
for $i=1,2,\ldots,l$,
\[
\on{ev}_i: \bM_{0,l}(\mathcal X,\beta) \rightarrow I\mathcal X
\]
is the evaluation map and $I\mathcal X$ is the inertia stack of $\mathcal X$.
\item  $\gamma_i\in H_{\on{CR}}^*(\mathcal X)$ are cohomological classes of the Chen-Ruan orbifold cohomology $H_{\on{CR}}^*(\mathcal X)$.
\item $a_i\in \mathbb Z_{\geq 0}$, for $1\leq i\leq l$.
\item
$\bar{\psi}_i\in H^2(\bM_{0,l}(\mathcal X, \beta),\mathbb Q)$
is the descendant class.
\end{itemize}

In the context of mirror theorems, we need to consider the following generating function of genus zero invariants, called the $J$-function:
\[
J_{\mathcal X}(t,z):=z+t+\sum_{\beta\in \on{NE}(X)}\sum_{l\geq 0}\sum_{\alpha}\frac{q^{\beta}}{l!}\left\langle \frac{\phi_\alpha}{z-\bar{\psi}},t,\ldots,t\right\rangle^{\mathcal X}_{0,l+1,\beta}\phi^{\alpha},
\]
where 
\begin{itemize}
    \item $\on{NE}(X)\subset H_2(X,\mathbb R)$ is the cone generated by effective curves and $\on{NE}(X)_{\mathbb Z}:=\on{NE}(X)\cap (H_2(X,\mathbb Z)/\on{tors})$.
    \item 
$\{\phi_\alpha\}, \{\phi^\alpha\}\subset H^*_{\on{CR}}(\mathcal X)$ are additive bases dual to each other under orbifold Poincar\'e pairing,
\item $t=\sum_{\alpha}t^\alpha\phi_\alpha\in H^*_{\on{CR}}(\mathcal X).$
\end{itemize}
The $J$-function is a slice of Givental's Lagrangian cone. We refer to \cite{Tseng} and \cite{CG} for more details. One can decompose the $J$-function according to the degree of curves
\[
J_{\mathcal X}(t,z)=\sum_{\beta}J_{\mathcal X, \beta}(t,z)q^{\beta}.
\]

\subsection{The formal Gromov-Witten invariants of infinite root stacks}

Let $X$ be a smooth projective variety and let 
\[
D_1,\ldots,D_n\subset X
\]
be smooth irreducible divisors. We assume that
\[
D=D_1+\ldots+D_n
\]
is simple normal crossings. For any index set $I\subseteq\{1,\ldots, n\}$, we define 
\[
D_I:=\cap_{i\in I} D_i.
\]
In particular, we write
\[
D_\emptyset:=X. 
\]
Let 
\[
\vec s=(s_1,\ldots,s_n)\in \mathbb Z^n.
\]
We define
\[
I_{\vec s}:=\{i:s_i\neq 0\}\subseteq \{1,\ldots,n\}.
\]

We consider the multi-root stack
\[
X_{D,\vec r}:=X_{(D_1,r_1),\ldots,(D_n,r_n)},
\]
where $\vec r=(r_1,\ldots,r_n)\in (\mathbb Z_{\geq 0})^n$ and $r_i$'s are pairwise coprime. For the purpose of this paper, we only consider genus zero invariants. By \cite{TY20c}*{Corollary 16}, genus zero orbifold Gromov-Witten invariants of $X_{D,\vec r}$, after multiplying suitable powers of $r_i$, is independent of $r_i$ for $r_i$ sufficiently large. Following \cite{TY20c}*{Definition 18}, the formal Gromov-Witten invariants of $X_{D,\infty}$ is defined as a limit of the corresponding genus zero orbifold Gromov-Witten invariants of $X_{D,\vec r}$.

Let \begin{itemize}
    \item $\gamma_j\in H^*(D_{I_{\vec s^j}})$, for $j\in\{1,2,\ldots,m\}$;
    \item $a_j\in \mathbb Z_{\geq 0}$, for $j\in \{1,2,\ldots,m\}$.
\end{itemize}
The formal genus zero Gromov-Witten invariants of $X_{D,\infty}$ are defined as
\begin{align*}
    \left\langle [\gamma_1]_{\vec s^1}\bar{\psi}^{a_1},\ldots, [\gamma_m]_{\vec s^m}\bar{\psi}^{a_m} \right\rangle_{0,\{\vec s^j\}_{j=1}^m,\beta}^{X_{D,\infty}}:=\left(\prod_{i=1}^n r_i^{s_{i,-}}\right)\left\langle \gamma_1\bar{\psi}^{a_1},\ldots, \gamma_m\bar{\psi}^{a_m} \right\rangle_{0,\{\vec s^j\}_{j=1}^m,\beta}^{X_{D,\vec r}}
\end{align*}
for sufficiently large $\vec r$, where
\begin{itemize}
    \item the vectors
\[
\vec s^j=(s_1^j,\ldots,s_n^j)\in (\mathbb Z)^n, \text{ for } j=1,2,\ldots,m,
\]
satisfy the following condition:
\[
\sum_{j=1}^m s_i^j=\int_\beta[D_i], \text{ for } i\in\{1,\ldots,n\}.
\]
These vectors are used to record contact orders of markings with respect to divisors $D_1,\ldots,D_n$. For orbifold Gromov-Witten theory of multi-root stacks $X_{D,\vec r}$, they record ages of twisted sectors of the inertial stacks $IX_{D,\vec r}$. We refer to \cite{TY20c} for more details.
\item $s_{i,-}$ is the number of markings that have negative contact order with the divisor $D_i$. In other words,
\[
s_{i,-}:=\#\{j: s_i^j<0\}, \text{ for } i=1,2,\ldots, n.
\]

\end{itemize}

\begin{remark}
When the divisor $D$ is smooth, the formal Gromov-Witten invariants of the infinite root stack $X_{D,\infty}$ are simply relative Gromov-Witten invariants of the smooth pairs $(X,D)$ following the work of \cite{ACW}, \cite{TY18} and \cite{FWY}.
\end{remark}

\subsection{Mirror theorems}

A Givental style mirror theorem can be stated as an equality between the $J$-function and the $I$-function via mirror maps. More generally, a mirror theorem can be stated using Givental's formalism (Givental's Lagrangian cone etc.). A mirror theorem for the formal Gromov-Witten invariants of infinite root stacks is proved in \cite{TY20b} as a limit of the mirror theorem for multi-root stacks.

The state space $\mathfrak H$ for the Gromov-Witten theory of $X_{D,\infty}$ is defined as follows:
\[
\mathfrak H:=\bigoplus_{\vec s\in \mathbb Z^n}\mathfrak H_{\vec s},
\]
where 
\[
\mathfrak H_{\vec s}:=H^*(D_{I_{\vec s}}).
\]
Each $\mathfrak H_{\vec s}$ naturally embeds into $\mathfrak H$. For an element $\gamma\in \mathfrak H_{\vec s}$, we write $[\gamma]_{\vec s}$ for its image in $\mathfrak H$. The pairing on $\mathfrak H$ 
\[
(-,-):\mathfrak H \times \mathfrak H\rightarrow \mathbb C
\]
is defined as follows: for $[\alpha]_{\vec s}$ and $[\beta]_{\vec s^\prime}$, define
\begin{equation}\label{eqn:pairing}
\begin{split}
([\alpha]_{\vec s},[\beta]_{\vec s^\prime}) = 
\begin{cases}
\int_{D_{I_{\vec s}}} \alpha\cup\beta, &\text{if } \vec s=-\vec s^\prime;\\
0, &\text{otherwise. }
\end{cases}
\end{split}
\end{equation}
The pairing on the rest of the classes is generated by linearity.

When $D_i$ are nef, the non-extended $I$-function is
\begin{align}\label{I-snc}
I_{X_{D,\infty}}(q,t,z):=\sum_{\beta\in \on{NE}(X)} J_{X, \beta}(t,z)q^{\beta}
\prod_{i=1}^n\prod_{0< a< d_i}(D_i+az)[\textbf{1}]_{ (-d_1,-d_2,\ldots, -d_n)}.
\end{align}

A mirror theorem for infinite root stacks is proved in \cite{TY20b}*{Section 4} (see also \cite{TY20c}*{Theorem 29}) which states that the $I$-function $I_{X_{D,\infty}}$ lies in Givental's Lagrangian cone for $X_{D,\infty}$. When $-K_X-D$ is nef, the $I$-function equals to the $J$-function via a change of variable called the mirror map. When $n=1$, this specializes to the mirror theorem for relative Gromov-Witten invariants of smooth pairs $(X,D)$ proved in \cite{FTY}*{Theorem 1.4}. One can also write down the extended $I$-function which is more complicated. Again, by \cite{TY20c}*{Theorem 29} the extended $I$-function lies in Givental's Lagrangian cone. For our propose, we only write down the part of the extended $I$-function of $X_{D,\infty}$,denoted by $I_{X_{D,\infty},0}(q,x,t,z)$, that takes value in $\mathfrak H_{(0,\ldots, 0)}:=H^*(X)$:
\begin{align*}
I_{X_{D,\infty},0}(q,x,t,z):=\sum_{\substack{\beta\in \on{NE}(X),(k_{i1},\ldots,k_{im})\in (\mathbb Z_{\geq 0})^m\\ \sum_{j=1}^m jk_{ij}= d_i, 1\leq i \leq n}  } & J_{X, \beta}(t,z)q^{\beta}\frac{\prod_{i=1}^n\prod_{j=1}^m x_{ij}^{k_{ij}}}{z^{\sum_{i=1}^n\sum_{j=1}^m k_{ij}}\prod_{i=1}^n\prod_{j=1}^m(k_{ij}!)}\\
& \cdot\left(\prod_{i=1}^n\prod_{0<a\leq d_i}(D_i+az)\right),
\end{align*}
where variables $x_{ij}$'s are used to record tangency conditions: contact order $j$ along the divisor $D_i$. 

We can specialize it to the toric complete intersections that we considered in Section \ref{sec:toric}. Let $X$ be a complete intersection in a toric variety $Y$ defined by a generic section of $E=L_0\oplus L_1\oplus\cdots \oplus  L_s$, where each $L_l$ is a nef line bundle. Let $\rho_l=c_1(L_l)$ and $\mathbf D_j$, $1\leq j \leq m$, be toric divisors. Let $D:=D_1+\ldots+D_n\in |-K_X|$ and assume that $D$ is simple normal crossings and $D_i$ are nef for $1\leq i \leq n$. Then the $I$-function for $(X,D)$ can be written as
\begin{align}\label{I-func-X-D}
I_{X_{D,\infty},0}(q,x,t,z):=e^{\sum_{i=0}^r p_i\log q_i}\sum_{d\in \on{NE}(Y)_{\mathbb Z}}&\prod_{j=1}^m \left(\frac{\prod_{k=-\infty}^0(\mathbf D_j+kz)}{\prod_{k=-\infty}^{\langle \mathbf D_j,d\rangle}(\mathbf D_j+kz)}\right)\left(\prod_{l=0}^s \prod_{k=1}^{\langle \rho_l, d\rangle }(\rho_l+kz)\right)q^d\\
\notag & \frac{\prod_{i=1}^n\prod_{j=1}^m x_{ij}^{k_{ij}}}{z^{\sum_{i=1}^n\sum_{j=1}^m k_{ij}}\prod_{i=1}^n\prod_{j=1}^m(k_{ij}!)}
 \cdot\left(\prod_{i=1}^n\prod_{0<a\leq d_i}(D_i+az)\right).
\end{align}
Note that the nefness assumption on $D_i$ can be removed if $D_i$ is coming from a toric divisor of $Y$.

Similar to the discussion in \cite{DKY}*{Section 6.4}, the information of periods in Section \ref{sec:toric-period} can be extracted from the $I$-function (\ref{I-func-X-D}). By relative mirror theorem \cite{TY20b}*{Theorem 29}, these periods compute genus zero Gromov-Witten invariants of $(X,D)$.

Recall that, we consider the degeneration of $(X,D)$ into $(X_1,D_1)\cup_{(D_0,D_{12})}(\tilde X_2,\tilde D_2)$, where $X_1\cap \tilde X_{2}=D_0$ and $D_1\cap \tilde D_2=D_{12}$. Then $I^X,I^{X_1},I^{\tilde X_2}$ and $I^{D_0}$ in Section \ref{sec:toric-period} are extracted from the $I$-functions of $(X,D), (X_1,D_0+D_1), (\tilde X_2,D_0+\tilde D_2)$ and $(D_0,D_{12})$ respectively. 

As mentioned in Remark \ref{rmk-periods}, the gluing formula still holds if we remove the common factor involving $\rho_{s+1}$ from the $I$-functions. In here, it means removing the factors involving $D_i$, for $1\leq i\leq n$. Then we obtain a relation among $I$-functions of $X, (X_1,D_1), (\tilde X_2,\tilde D_2)$ and $D_0$. 

Through mirror symmetry, the gluing formulae among periods provides a relation among Gromov-Witten invariants. On the A-model side, there are degeneration formulae relating Gromov-Witten invariants. In future work, we will study how these formulae from A-model and B-model are related. We expect the following.

\begin{conjecture}
The gluing formula for period integrals in the form of Theorem \ref{thm:general-case-I-function} holds for Tyurin degenerations of Calabi--Yau varieties (or log Calabi--Yau varieties). Furthermore, the degeneration formula for Gromov--Witten invariants implies the gluing formula for period integrals.
\end{conjecture}

\bibliographystyle{amsxport}
\bibliography{universalreferences.bib}

\end{document}